\numberwithin{equation}{section}
\DeclareMathSymbol{\leqslant}{\mathalpha}{AMSa}{"36} 
\DeclareMathSymbol{\geqslant}{\mathalpha}{AMSa}{"3E} 
\DeclareMathSymbol{\eset}{\mathalpha}{AMSb}{"3F}     
\renewcommand{\leq}{\;\leqslant\;}                   
\renewcommand{\geq}{\;\geqslant\;}                   
\newcommand{\be}{\begin{equation}}
\def\1{\ifmmode {1\hskip -3pt \rm{I}} \else {\hbox {$1\hskip -3pt \rm{I}$}}\fi}
\newtheorem{Th}{Theorem}[section]
\newtheorem{Le}[Th]{Lemma}
\newtheorem{Pro}[Th]{Proposition}
\newtheorem{Cor}[Th]{Corollary}
\newcommand{\cA}{\ensuremath{\mathcal A}}
\newcommand{\cB}{\ensuremath{\mathcal B}}
\newcommand{\cC}{\ensuremath{\mathcal C}}
\newcommand{\cD}{\ensuremath{\mathcal D}}
\newcommand{\cF}{\ensuremath{\mathcal F}}
\newcommand{\cH}{\ensuremath{\mathcal H}}
\newcommand{\cK}{\ensuremath{\mathcal K}}
\newcommand{\cL}{\ensuremath{\mathcal L}}
\newcommand{\cN}{\ensuremath{\mathcal N}}
\newcommand{\cP}{\ensuremath{\mathcal P}}
\newcommand{\cT}{\ensuremath{\mathcal T}}
\newcommand{\cV}{\ensuremath{\mathcal V}}
\newcommand{\cW}{\ensuremath{\mathcal W}}
\newcommand{\bbC}{{\ensuremath{\mathbb C}} }
\newcommand{\bbE}{{\ensuremath{\mathbb E}} }
\newcommand{\bbI}{{\ensuremath{\mathbb I}} }
\newcommand{\bbL}{{\ensuremath{\mathbb L}} }
\newcommand{\bbN}{{\ensuremath{\mathbb N}} }
\newcommand{\bbP}{{\ensuremath{\mathbb P}} }
\newcommand{\bbQ}{{\ensuremath{\mathbb Q}} }
\newcommand{\bbR}{{\ensuremath{\mathbb R}} }
\newcommand{\bbZ}{{\ensuremath{\mathbb Z}} }
\newcommand{\wt}{\widetilde}
\let\a=\alpha \let\b=\beta   \let\d=\delta  \let\e=\varepsilon
 \let\g=\gamma       \let\l=\lambda
          \let\p=\pi  
  \let\s=\sigma \let\t=\tau   
\let\D=\Delta   \let\G=\Gamma
\def\\{\hfill\break}
\def\tthsp{\kern .083333 em}
\def\?{\mskip -10mu}
\def\indbox#1{\hbox to \parindent{\hfil\ #1\hfil} }
\def\hexnumber#1{%
  \ifcase#1 0\or 1\or 2\or 3\or 4\or 5\or 6\or 7\or 8\or
  9\or A\or B\or C\or D\or E\or F\fi}
\font\tenmsa=msam10 \font\sevenmsa=msam7 \font\fivemsa=msam5
\edef\msafamhexnumber{\hexnumber\msafam}%
\mathchardef\restriction"1\msafamhexnumber16 \mathchardef\ssim"0218
\mathchardef\square"0\msafamhexnumber03
\mathchardef\eqd"3\msafamhexnumber2C
\def\QED{\ifhmode\unskip\nobreak\fi\quad
  \ifmmode\square\else$\square$\fi}
\font\tenmsb=msbm10 \font\sevenmsb=msbm7 \font\fivemsb=msbm5
\font\teneufm=eufm10 \font\seveneufm=eufm7 \font\fiveeufm=eufm5
\def\({\left(}
\def\){\right)}
\let\neper=e
\let\ii=i
\outer\def\nproclaim#1 [#2]#3. #4\par{\medbreak \noindent
   \talato(#2){\bf #1 \Thm[#2]#3.\enspace }%
   {\sl #4\par }\ifdim \lastskip <\medskipamount
   \removelastskip \penalty 55\medskip \fi}
\def\thmm[#1]{#1}
\def\teo[#1]{#1}
\def\sttilde#1{%
\dimen2=\fontdimen5\textfont0 \setbox0=\hbox{$\mathchar"7E$}
\setbox1=\hbox{$\scriptstyle #1$} \dimen0=\wd0 \dimen1=\wd1
\advance\dimen1 by -\dimen0 \divide\dimen1 by 2
\vbox{\offinterlineskip%
   \moveright\dimen1 \box0 \kern - \dimen2\box1}
}
 \title[Spectral analysis of 1D nearest--neighbor random walks]{Spectral analysis of 1D
nearest--neighbor random walks  and  applications to subdiffusive
trap and barrier models }
\author{A. Faggionato  }
\address{Alessandra Faggionato. Dipartimento di Matematica ``G. Castelnuovo", Universit\`a ``La
  Sapienza''. P.le Aldo Moro  2, 00185  Roma, Italy. e--mail:
  faggiona@mat.uniroma1.it}
\thanks{Work supported by the European Research Council through the ``Advanced
Grant'' PTRELSS 228032}
\begin{document}

\maketitle

\begin{abstract}  We consider
  a   sequence  $X^{(n)}$, $n \geq 1 $,   of continuous--time
nearest--neighbor random walks on the one dimensional lattice
$\bbZ$.  We reduce  the spectral analysis of the Markov generator of
$X^{(n)}$ with Dirichlet conditions outside $(0,n)$ to the analogous
problem  for  a suitable generalized second order differential
operator $-D_{m_n} D_x$, with Dirichlet conditions outside a given
interval. If  the measures $dm_n$ weakly converge to some measure
$dm_\infty$,  we prove a limit theorem for the eigenvalues and
eigenfunctions of $-D_{m_n}D_x$ to the corresponding spectral
quantities of $-D_{m_\infty}  D_x$.  As second result,  we prove the
Dirichlet--Neumann bracketing for the operators  $-D_m D_x$ and, as
a consequence, we establish
  lower and upper bounds for
the asymptotic annealed eigenvalue counting functions in the case
that $m$ is a self--similar stochastic process.  Finally, we apply
the above results to investigate the spectral structure of some
classes of  subdiffusive random trap and barrier models coming from
one--dimensional physics.
\bigskip

\noindent {\em Key words}: random walk, generalized differential
operator, Sturm--Liouville theory, random trap model, random barrier
model, self--similarity, Dirichlet--Neumann bracketing.

\smallskip

\noindent {\em MSC-class:
60K37, 
82C44, 
34B24. 
   }

\end{abstract}


\section{Introduction}
Continuous--time nearest--neighbor random walks on $\bbZ$
 are a basic object in
probability theory with numerous applications, including the
modeling  of one--dimensional physical systems. A fundamental
example   is given by the simple symmetric random walk (SSRW) on
$\bbZ$, of which we recall some standard results. It is well known
that the SSRW converges to the standard Brownian motion under
diffusive space--time rescaling.
 Moreover,
the sign--inverted  Markov generator with Dirichlet conditions
outside $(0,n)$ has exactly $n-1$ eigenvalues, which are all
positive  and simple. Labeling  the eigenvalues in increasing order
$\bigl( \l _k^{(n)} : 1\leq k <n \bigr)$,  the $k$--th one is given
by  $ \l^{(n)}_k= 1-\cos (\p k /n)$ with  associated eigenfunction $
f^{(n)}_k (j)= \sin (k\p j /n) $, $j \in \bbZ \cap [0,n]$. Extending
$f^{(n)}_k$ to all $[0,n]$ by linear interpolation, one observes
that
$$ \lim_{n\uparrow \infty} n^2 \l_k^{(n)}= \frac{\p^2 k^2}{2}=: \l_k$$ and
$$ \lim _{n\uparrow \infty} f^{(n)}_k(   [ n x]   )= \sin (k \p x)=: f_k (x) \,,$$
where $[nx]$ denotes the integer part of $nx$ and the last limit is
in the space $C([0,1])$ endowed of the uniform norm. On the other
hand, the standard Laplacian $-(1/2)\D$ on $[0,1]$ with Dirichlet
boundary conditions has $\bigl(\l_k:k \geq 1\bigr)$ as family of
eigenvalues and $f_k$ as eigenfunction associated to the simple
eigenvalue $\l_k$.

\smallskip

Considering this simple example it is natural to  ask how general
the above considerations can be. In particular, given  a family of
continuous--time nearest--neighbor random walks $X^{(n)}$  defined
on the rescaled interval $[0,1]\cap \bbZ_n$,   $\bbZ_n:=\{ k/n\,:\,
k \in \bbZ\}$,  killed when reaching the boundary, one would like
very general criteria to establish (i) the convergence of $X^{(n)}$
to some stochastic process $X^{(\infty)}$, (ii)
 the convergence of the
eigenvalues and eigenfunctions of the Dirichlet Markov generator of
$X^{(n)}$   to the corresponding spectral quantities of the
Dirichlet Markov generator of some stochastic process
$Y^{(\infty)}$. Note that we have not imposed
$X^{(\infty)}=Y^{(\infty)}$ and the reason will be clarified soon.

 \smallskip

Criteria to establish (i) also in a more general context have been
developed by C. Stone in \cite{S}.
These  results  have been successfully applied in order to study
rigorously the asymptotic  behavior of nearest--neighbor random
walks on $\bbZ$ with random environment, as the random barrier model
\cite{KK},
 \cite{FJL} and  the random trap model \cite{FIN},
\cite{BC1}, \cite{BC2} (see below). In the first part of the paper,
we focus on a general criterion to establish (ii). As well known,
 by an injective map $\bbZ_n\to \bbR$,  one can always
  transform $X^{(n)}$ into a   random walk $Y^{(n)}
$  which can be expressed   as time change of the Brownian motion
$B$, suitably killed, with scale function given by the identity map
and speed measure $dm_n$. This transformation reveals crucial, since
the Markov generator of $Y^{(n)} $ can be defined
 on   continuous
and piecewise--linear functions and
  the convergence of eigenfunctions is simply in the uniform topology (otherwise one is forced to deal with
  rather complex functional spaces as in \cite{FJL}). We point out
  that in order to establish the convergence (i) one often needs to
  consider an additional transformation (thus explaining why the
  above processes
  $X^{(\infty)}$ and  $Y^{(\infty)}$ can differ).
  The sign--inverted Markov generator of $Y^{(n)}$ can
be written as a generalized differential operator $-D_{m_n} D_x$ on
a suitable interval  with Dirichlet b.c. (boundary conditions).
Briefly, in Theorem \ref{speriamo} we will show that the asymptotic
spectral structure of $-D_{m_n} D_x$  coincides with the one of
$-D_m D_x$ if the measure $dm_n$ weakly converges to the measure
$dm$, in particular we show the convergence of the $k$--th
eigenvalue and the associated eigenfunction. A  similar convergence
result is proven  by T.Uno and I. Hong in \cite{UH} for a family  of
differential operators  on $\G_n$, where  $\G_n$ is a suitable
sequence of subsets in $\bbR$ converging to the Cantor set. Some
ideas in their proof have been applied to our context, while others
are very model--dependent. The route followed here is more inspired
by modern Sturm--Liouville theory \cite{KZ}, \cite{Ze}, where the
continuity of the spectral structure is related to the continuity
properties of a suitable family of entire functions. We point out
that  continuity theorems for the spectral structure already exist.
See for example Ogura's paper \cite{O}[Section 5]. There the author
proves the vague convergence (even a stronger version)  of the so
called spectral measure $\s_n(dx) $ associated to $-D_{m_n} D_x$ to
the one $\s(dx)$ associated to $-D_m D_x$ if the measure $dm_n$
weakly converges to the measure $dm$. The spectral measure comes
from the Weyl--Kodaira--Titchmarsh theorem \cite{Ko,Y}, is a
matrix--valued measure on $\bbR$  with support coinciding with the
spectrum of the operator. One could work on Ogura's  convergence
result to deduce the convergence of  the eigenvalues and the
associated eigenfunctions. We did not follow this route since more
elaborated, preferring a more elementary approach. The same
observation holds for the continuity theorem of Kasahara
\cite{K}[Theorem 1] based on Krein's correspondence.

\smallskip

As second step in our investigation we  prove the Dirichlet--Neumann
bracketing for the generalized operator $-D_m D_x$ (Theorem
\ref{DNB}). This is a key result in order to get estimates on the
asymptotics of eigenvalues as in the  Weyl's classical theorem for
the Laplacian on bounded Euclidean domains (see \cite{W1},
\cite{W2}, \cite{CH1}, \cite{RS4}[Chapter XIII.15]). The form of the
bracketing used in our analysis  goes back to G. M\'{e}tivier and
M.L. Lapidus (cf. \cite{Me}, \cite{L}) and  has been successfully
applied in \cite{KL}  to establish an analogue of Weyl's classical
theorem for the Laplacian on finitely ramified self--similar
fractals. Finally, from the Dirichlet--Neumann bracketing we  derive
the behavior  at $\infty$ of the
 averaged
eigenvalue counting function   of the operator $-D_m D_x$ on a
finite interval with Dirichlet b.c.  under the assumption that $m$
is a self--similar stochastic process (see Theorem \ref{tacchino}).
We point out that in \cite{Fr}, \cite{H}, \cite{KL} \cite{UH} the
authors study the asymptotics of the eigenvalues for the Laplacian
defined on self--similar geometric objects. In our case, the
self--similarity structure enters into the problem through the
self--similarity of $m$.

\smallskip

As application of the above analysis (Theorem \ref{speriamo},
Theorem \ref{DNB} and Theorem  \ref{tacchino}) we  investigate  the
small eigenvalues of some classes of  subdiffusive random trap and
barrier models (Theorems \ref{affinita} and \ref{affinitabis}). Let
$\cT= \{\t_x\,:\, x \in \bbZ\}$ be a family of positive i.i.d.
random variables belonging to the domain of attraction of an
$\a$--stable law, $0<\a<1$. Given $\cT$, in the random trap model
the particle waits at site $x$  an exponential time  with mean
$\t_x$ and after that it jumps to $x-1$, $x+1$ with equal
probability. In the random barrier model, the probability rate for a
jump from $x-1$ to $x$ equals the probability rate for a jump from
$x$ to $x-1$ and is given by $1/\t(x)$. We consider also generalized
random trap models, called {\em asymmetric random trap models} in
\cite{BC1}. Let us call $X^{(n)} $ the rescaled random walk on
$\bbZ_n$ obtained by accelerating the dynamics of  a factor of order
$n^{1 +\frac{1}{\a}}$ (apart a   slowly varying function) and
rescaling the lattice by a factor $1/n$.
 As
investigated in \cite{KK}, \cite{FIN} and \cite{BC1}, the law of
$X^{(n)} $ averaged over the environment $\cT$ equals the law of a
 suitable $V$--dependent random walk $\tilde X^{(n)}$ averaged over $V$, $V$
being an $\a$--stable subordinator. To this  last random walk
$\tilde X^{(n)}$ one can apply our general results, getting at the
end some  annealed spectral information about $X^{(n)}$.

\smallskip

 Random trap and random barrier walks on $\bbZ$
  have been introduced in Physics in order  to model 1d particle or
excitation dynamics, random 1d Heisenberg ferromagnets, 1d
tight--binding fermion systems, electrical lines of conductances or
capacitances \cite{ABSO}. More recently (cf. \cite{BCKM}, \cite{BDe}
and references therein) subdiffusive random walks on $\bbZ$  have
been used as toy models for slowly relaxing systems as glasses and
spin glasses exhibiting aging, i.e. such that the time--time
correlation functions keep memory of the preparation time of the
system even asymptotically. Our results contribute to the
investigation of the spectral properties of aging stochastic models.
This analysis and the study of the relation between aging and the
spectral structure of the Markov generator has been done in
\cite{BF1} for the REM--like trap model on the complete graph.
Estimates on the first Dirichlet eigenvalue of $X^{(n)}$  in the
case of subdiffusive (also asymmetric and in $\bbZ^d$, $d\geq 1$)
trap models have been derived in \cite{Mo}, while the spectral
structure of the 1d Sinai's random walk for small eigenvalues has
been investigated in \cite{BF2}. The method developed in \cite{BF2}
is  based on perturbation  and capacity theory together with the
property that the random environment can be approximated by a
multiple--well potential. This method cannot be applied here and we
have followed a different route.


\section{Model and results}\label{mod_res}
We consider a generic continuous--time nearest--neighbor random walk
$(X_t : t \geq 0 )$  on $\bbZ$. We denote by  $c(x,y)$  the
probability rate for a jump from $x$ to $y$: $c(x,y)>0$ if and only
if $|x-y|=1$, while   the Markov generator $\bbL$ of $X_t$ can be
written as \begin{equation}\label{generatore} \bbL f(x)=
c(x,x-1)\bigl[ f(x-1)-f(x) \bigr]+ c(x,x+1) \bigl[f(x+1)-f(x) \bigr]
\end{equation}
for any bounded function $f: \bbZ \rightarrow \bbR$. The random walk
$X_t$ can be described as follows: arrived at site $x \in \bbZ$, the
particle waits an exponential time of mean $1/[ c
(x,x-1)+c(x,x+1)]$, after that it jumps to $x-1$ and $x+1$ with
probability
$$ \frac{c (x,x-1)}{c(x,x-1)+c(x,x+1)}\,\;\; \text{ and } \;\;
\frac{c (x,x+1)}{c(x,x-1)+c(x,x+1)}\,,$$ respectively.

\medskip

By a recursive procedure, one can always determine two positive
functions $U$ and $H$ on $\bbZ$ such that
\begin{equation}\label{nanna}
 c(x,y)=1/\left[
 H(x) U(x \lor y ) \right] \,, \qquad \forall x,y\in \bbZ: |x-y|=1 \,.
\end{equation}
Moreover, the above functions $U$ and $H$  are univocally determined
apart a positive factor $c$ multiplying $U$ and dividing $H$.
Indeed, the system of equation \eqref{nanna} is equivalent to the
system
\begin{equation}\label{ninna}
\begin{cases}
 U(x+1)= U(x) \frac{ c(x,x-1)}{c(x,x+1) } \,,   \\
H(x)= \frac{1}{c(x,x-1) U(x) }\,, \end{cases}
 \qquad
\forall x \in \bbZ\,.
\end{equation}
We observe that
 $U $ is a constant function if and only if the jump rates $c(x,y)$
 depend  only on the starting point $x$. Taking without loss of
 generality $U\equiv 2$, we get that after arriving at site $x$ the
 random walk $X_t$ waits an exponential time of mean $H(x)$ and then
 jumps with equal probability to $x-1$ and to $x+1$. This special
 case is known in the physics literature as {\sl trap model} \cite{ABSO}.
 Similarly, we observe that $H$ is a constant function if and only
 if the jump rates $c(x,y)$ are symmetric, that is  $c(x,y)=c(y,x)$
 for all $x,y \in \bbZ$. Taking without loss of
 generality $H\equiv 1$, we get that $c(x,x-1)=c(x-1,x)= U(x)$. This special
 case is known in the physics literature both as {\sl barrier model} \cite{ABSO} and  as
 random walk among {\sl conductances}, since $X_t$ corresponds to
 the random walk associated in a natural way to  the linear resistor network with nodes
 given by the sites of $\bbZ$ and electrical filaments between
 nearest--neighbor nodes $x-1,x$ having conductance $c(x-1,x)=U(x)$ \cite{DS}.
If the rates $\{c(x,x \pm 1) \}_{x \in \bbZ}$ are random one speaks
 of random trap model, random barrier model and random
walk among random conductances.

\medskip

In order to describe some asymptotic spectral  behavior as
$n\uparrow \infty$, we consider a   family $X^{(n)}(t)$ of
continuous--time nearest--neighbor random walks on $\bbZ_n:= \{
k/n\,:\, k \in \bbZ\}$  parameterized by  $n \in \bbN_+=\{1,2,
\dots\}$. We call $c_n (x,y)$ the corresponding jump  rates and we
fix positive functions $U_n$, $H_n$ satisfying the analogous of
equation \eqref{ninna} (all is referred to $\bbZ_n$ instead of
$\bbZ$). Below we denote by $ L_n$ the pointwise operator
\begin{equation}\label{puntino} L_n f(x) = c_n (x, x-1/n )
[f(x-1/n )- f(x) ]+ c_n (x, x+1/n) [ f(x+1/n )- f(x) ]\,
\end{equation}
defined at $x\in \bbZ_n$ for all functions $f$ whose domain contains
$x-\frac{1}{n}, x, x+\frac{1}{n}$. The Markov generator of $X_t
^{(n)}$  with Dirichlet  conditions outside $(0,1)$ will be denoted
by $\bbL_n$. We recall that it is defined as the operator
$\bbL_n:\cV _n \rightarrow \cV_n$,  where
\begin{equation}\label{abbandono}
\cV _n:=\{f: [0,1]\cap \bbZ_n \rightarrow
\bbC, \; f(0)=f(1)=0\}\,,
\end{equation}
such that $$ \bbL_n f (x)=
\begin{cases}
L_n f(x) & \text{ if } x\in (0,1)\cap \bbZ_n\,,\\
0 & \text{ if } x=0,1\,.
\end{cases}
$$
As discussed in Section
 \ref{preliminare}, the  operator $-\bbL_n$  has $n-1$ eigenvalues which are
 all simple and positive, while the related eigenvectors can be taken as real vectors.
Below  we write the eigenvalues as $\l_1^{(n)}<\l_2^{(n)}< \cdots <
\l_{n-1}^{(n)}$.

\medskip
In order to determine the suitable frame  for the analysis of the
  eigenvalues and eigenvectors of $-\bbL_n$,
 we
recall some basic facts from the theory of generalized second order
differential operators $-D_m D_x$ (cf. \cite{KK0}, \cite{DM},
\cite{K1}[Appendix]), initially developed to analyze the behavior of
a vibrating string. Let $m:\bbR\rightarrow [0,\infty)$ be a
nondecreasing  function with $m(x) = 0$ for all $x<0$. Without loss
of generality we can suppose that $m$ is c\`{a}dl\`{a}g.  We denote
by $dm$ the Lebesgue--Stieltjes measure associated to $m$, i.e. the
Radon measure on $\bbR$ such that $ dm( (a,b]) = m(b)-m(a)$ for all
$a<b$. We define  $E_m$ as the support of $dm$,
 i.e. the  set of
points where $m$ increases: \begin{equation}\label{emme}
 E_m :=\{ x
\in [0,\infty)\, :\, m(x-\e)<m(x+\e) \; \forall \e >0\}\,.
\end{equation}
 We  suppose that  $E_m \not = \emptyset$,      $0 =  \inf
E_m$ and   $\ell_m := \sup E_m<\infty$.
 Then, $F\in C([0, \ell_m], \bbC)$ is
an eigenfunction with eigenvalue $\l$ of the generalized
differential operator $-D_m D_x$ with Dirichlet b.c. if
$F(0)=F(\ell_m)=0$ and if it holds
\begin{equation}\label{pioggiapasso}
F(x)=  b\, x-\l \int_0^x dy \int _{[0,y) } d m (z) F (z) \,,  \qquad
\forall x \in [0, \ell_m] \,,
\end{equation}
for some constant $b$.
The number
$b$ is called {\sl derivative  number} and is denoted $F'_-(0)$ (see
Section \ref{preliminare}  for further details).
As discussed in \cite{L1}, \cite{L2},  the operator  $-D_m D_x $
 with Dirichlet b.c. is the generator of the   quasidiffusion on $(0,\ell_m)$
 with scale function $s(x)=x$ and  speed measure $dm$, killed when reaching the boundary points $0,\ell_m$. This quasidiffusion can be suitably defined as time change of the standard one--dimensional Brownian motion \cite{L2}, \cite{S}.

\medskip

 The spectral analysis of $-\bbL_n$  can be reduced to the spectral analysis of a suitable generalized differential operator $-D_{m_n} D_x$ as follows.  We
define the function $S_n:[0,1] \cap \bbZ_n \rightarrow \bbR$ as
\begin{equation}\label{treno1}
S_n(k/n) =
\begin{cases}
0 & \text{ if } k=0 \,, \\
\sum_{j=1}^{k} U_n(j/n) & \text{ if } 1\leq k \leq n \,.
\end{cases}
\end{equation}
To simplify the notation, we set \begin{equation}\label{treno2}
x^{(n)}_k:= S_n(k/n)\,, \; \text{ for } k: 0\leq k \leq n \,.
\end{equation} Finally, we define the nondecreasing c\`{a}dl\`{a}g
function $m_n : \bbR\rightarrow [0,\infty)$ as
\begin{equation}\label{treno3}
m_n(x) =  \sum _{k=0}^n
 H_n(k/n)\bbI\bigl( x_k^{(n)} \leq  x \bigr)
\end{equation} where $\bbI(\cdot)$ denotes the characteristic function. Then
$$
dm_n = \sum _{k=0}^{n} H_n (k/n) \d_{ x _k ^{(n)} }\,, \qquad E_n:=
E_{m_n} = \{ x_k ^{(n)}\,:\, 1\leq k \leq n \}\,,\qquad  \ell_n:=
\ell_{m_n} = x^{(n)}_n\,.$$
We denote by  $C_n[0, \ell_n]$   the set of complex continuous
functions on $[0, \ell_n]$
 that are linear on $[0, \ell_n]\setminus E_n$. Then,  the map
\begin{equation}\label{gallina} T_n:\bbC ^{[0,1]\cap \bbZ_n}\ni f \rightarrow  T_n f \in
C_n[0, \ell_n]\,,
\end{equation}
associating to  $f$ the unique function $T_n f \in C_n [0, \ell_n]$
such that  $$ T_n f (x^{(n)}_k)= f (k/n)\,, \qquad 0\leq k \leq
n\,,$$ is trivially bijective. As discussed in Section
\ref{preliminare}, the map $T_n$ defines also  a bijection between
the eigenvectors of $-\bbL_n$ with eigenvalue $\l$  and the
eigenfunctions   of the differential operator $-D_{m_n} D_x$ with
Dirichlet conditions outside $(0,\ell_n)$ associated to the
eigenvalue $\l$.

\medskip

We can finally  state the asymptotic behavior of the small eigenvalues:
\begin{Th}\label{speriamo}
Suppose that $\ell_n $ converges to some $\ell \in (0,\infty)$ and
that $dm_n$ weakly converges to a measure $dm$, where $m:
\bbR\rightarrow [0,\infty)$ is a c\`{a}dl\`{a}g function such that
$m(x)=0$ for all $x \in (-\infty, 0)$. Assume that  $0=\inf E_m$,
$\ell= \sup
E_m$ and that  $dm$ is not a linear combination of a finite family of delta measures. 

Then the generalized differential operator $-D_m D_x$ with Dirichlet
conditions outside   $(0, \ell)$ has an infinite number of
eigenvalues, which are all positive and simple. List these
eigenvalues  in increasing order as
 $\{\l_k\,:\, k \geq 1\}$,
 and list the $n-1$ eigenvalues
of the  operator $-\bbL_n$, which are all positive and simple, as
$\l_1^{(n)} <
 \cdots< \l_{n-1}^{(n)}$.
Then for each $k \geq 1$ it holds \begin{equation}\label{europa}
\lim_{n\uparrow \infty }\l^{(n)}_k =\l_k\,.
\end{equation}
For each $k \geq 1$, fix an eigenfunction $F_k$ with eigenvalue
$\l_k$ for the  operator $-D_m D_x$ with Dirichlet conditions. Then,
by suitably choosing the
 eigenfunction $F^{(n)}_k\in C(
 [0,\ell_n])$ of  eigenvalue
$\l^{(n)}_k$ for  the operator  $-D_{m_n}D_x$ with Dirichlet
conditions, it holds
\begin{equation}\label{arabia}
\lim _{n\uparrow \infty}  F^{(n)} _k =F_k\qquad \text{ in }\;   C ([0, \ell+1]) \text{\; w.r.t.\; } \|\cdot  \|_\infty\,,
 \end{equation} where $F_k$ and $F^{(n)}_k$ are  set equal to zero  on $(\ell,
 \ell+1]$ and $( \ell_n, \ell+1]$, respectively.
\end{Th}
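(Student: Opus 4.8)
The plan is to reduce the whole statement to the study of a single entire function of $\lambda$ attached to each string — the characteristic function obtained by evaluating at the right endpoint the normalized solution of \eqref{pioggiapasso} — together with a convergence statement for those solutions. First I would fix, for each $\lambda\in\bbC$, the solution $\phi(\cdot,\lambda)$ of \eqref{pioggiapasso} with derivative number $b=1$; after a Fubini rewriting this reads $\phi(x,\lambda)=x-\lambda\int_{[0,x)}(x-z)\phi(z,\lambda)\,dm(z)$, which I would solve by iteration, $\phi(x,\lambda)=\sum_{j\ge0}(-\lambda)^j\phi_j(x)$ with $\phi_0(x)=x$ and $\phi_{j+1}(x)=\int_{[0,x)}(x-z)\phi_j(z)\,dm(z)$. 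Elementary bounds ($\phi_j\ge0$, $\phi_j(0)=0$, $|\phi_j(x)|\le x^{j+1}m(x)^j/j!$ via the permutation symmetry of $dm^{\otimes j}$) show that the series converges, is entire in $\lambda$, and defines a function of $x$ that is continuous and linear off $E_m$. Setting $\chi(\lambda):=\phi(\ell,\lambda)$, and defining $\phi_n(\cdot,\lambda)$ and $\chi_n(\lambda):=\phi_n(\ell_n,\lambda)$ in the same way with $dm_n$ in place of $dm$, the correspondence recalled before the statement (together with the properties of $-\bbL_n$ and of the operators $-D_mD_x$ established in Section \ref{preliminare}) gives: the eigenvalues of $-\bbL_n$ are exactly the zeros of $\chi_n$ (a degree-$(n-1)$ polynomial with $n-1$ simple positive real zeros), while $\lambda>0$ is a Dirichlet eigenvalue of $-D_mD_x$ if and only if $\chi(\lambda)=0$, the eigenspace then being one-dimensional and spanned by the restriction of $\phi(\cdot,\lambda)$ to $[0,\ell]$.

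The core step is to prove that $\phi_n(\cdot,\lambda)\to\phi(\cdot,\lambda)$ uniformly on $[0,\ell+1]\times\{|\lambda|\le R\}$ for every $R$, and hence $\chi_n\to\chi$ uniformly on compacts. I would argue by induction on the iterates: assuming $\phi_{n,j}\to\phi_j$ uniformly on $[0,\ell+1]$, I would pass to the limit in $\phi_{n,j+1}(x)=\int_{[0,x)}(x-z)\phi_{n,j}(z)\,dm_n(z)$ using that $dm_n\to dm$ weakly (so in particular $\sup_n m_n(\ell+1)<\infty$) and that the integrand is nonnegative on $[0,x]$ and vanishes at $z=0$ and $z=x$; sandwiching $\mathbf 1_{[0,x)}$ between continuous functions agreeing with it outside an arbitrarily small neighbourhood of $\{0,x\}$, and using $dm((x,x+\delta])\to0$ as $\delta\downarrow0$, yields pointwise convergence, and the uniform-in-$n$ Lipschitz bound $|\phi_{n,j+1}'|\le\sup|\phi_{n,j}|\cdot\sup_n m_n(\ell+1)$ upgrades it to uniform convergence. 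Summing the series — whose tail is controlled uniformly in $n$ and in $|\lambda|\le R$ by the factorial estimate of the first step — gives the claim, and $\ell_n\to\ell$ together with the continuity of $\phi(\cdot,\lambda)$ then yields $\chi_n(\lambda)=\phi_n(\ell_n,\lambda)\to\phi(\ell,\lambda)=\chi(\lambda)$.

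With the locally uniform convergence of the entire functions $\chi_n\to\chi$ in hand, I would deduce the spectral conclusions. Since $dm$ is not a finite linear combination of delta measures, $\chi$ is not a polynomial (one checks by induction, using $\ell\in E_m$, that $\phi_j(\ell)>0$ for all $j$, so $\chi(-t)=\sum_j t^j\phi_j(\ell)$ grows faster than any polynomial); invoking the self-adjoint realization of $-D_mD_x$ with Dirichlet b.c.\ on $L^2(dm)$ from Section \ref{preliminare} (compact resolvent, nonnegative spectrum, one-dimensional eigenspaces as above, and $0$ not an eigenvalue since a linear function vanishing at $0$ and $\ell$ is $0$), and the fact that $L^2(dm)$ is infinite-dimensional, I obtain infinitely many eigenvalues $0<\lambda_1<\lambda_2<\cdots$, which are precisely the zeros of $\chi$. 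Fixing $R\notin\{\lambda_k\}$ and $\delta<\lambda_1$ with $\chi>0$ on $[0,\delta]$, Hurwitz's theorem gives, for $n$ large, that $\chi_n$ has no zero in $[0,\delta]$ and exactly one simple zero near each of the finitely many $\lambda_i\in(0,R)$, converging to $\lambda_i$; by the monotone labelling these are $\lambda_1^{(n)},\dots,\lambda_{K}^{(n)}$, so letting $R\uparrow\infty$ gives $\lambda_k^{(n)}\to\lambda_k$ for every $k$, which is \eqref{europa}. Finally, writing $F_k=c_k\,\phi(\cdot,\lambda_k)$ on $[0,\ell]$ and setting $F_k^{(n)}:=c_k\,\phi_n(\cdot,\lambda_k^{(n)})$ on $[0,\ell_n]$ (an eigenfunction since $\phi_n(\ell_n,\lambda_k^{(n)})=\chi_n(\lambda_k^{(n)})=0$), extended by $0$ as prescribed, I would split $[0,\ell+1]$ into $[0,\min(\ell,\ell_n)]$ (closeness from the uniform convergence of $\phi_n$ and the local uniform continuity of $\lambda\mapsto\phi(\cdot,\lambda)$, with $\lambda_k^{(n)}\to\lambda_k$), the shrinking interval between $\ell_n$ and $\ell$ (where one function vanishes and the other is small, because $\phi(\ell,\lambda_k)=\chi(\lambda_k)=0$ and both solutions are continuous and close there), and $[\max(\ell,\ell_n),\ell+1]$ (both zero), obtaining \eqref{arabia}.

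I expect the main obstacle to be exactly the convergence $\phi_n(\cdot,\lambda)\to\phi(\cdot,\lambda)$ of the normalized solutions: extracting it from the mere weak convergence $dm_n\to dm$ forces one to handle carefully the half-open integration intervals $[0,x)$ and the atoms of $dm$ and of the $dm_n$, and to interface this with $\ell_n\to\ell$. Once $\chi_n\to\chi$ locally uniformly is established, the remaining steps are a fairly standard Hurwitz argument together with bookkeeping at the endpoints.
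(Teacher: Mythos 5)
Your approach is a genuine variant of the paper's, and most of it is sound: the iteration scheme for $\phi(\cdot,\lambda)$, the entire function $\chi_n(\lambda)=\phi_n(\ell_n,\lambda)$, the Hurwitz-type deduction of eigenvalue convergence from locally uniform convergence $\chi_n\to\chi$, and the direct eigenfunction convergence via locally uniform convergence $\phi_n(\cdot,\lambda)\to\phi(\cdot,\lambda)$ are all workable. In particular your eigenfunction argument is an attractive alternative to the paper's route, which instead passes through the Green--function integral equation \eqref{nonnina}, an $L^2(dm_n)$-normalization, and Arzel\`a--Ascoli (Lemma \ref{onda1}); what you buy is an explicit normalization $F_k^{(n)}=c_k\phi_n(\cdot,\lambda_k^{(n)})$ in place of the compactness/uniqueness-of-limit-point argument.

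The one substantive gap is the claim that $-D_mD_x$ has \emph{infinitely many} Dirichlet eigenvalues. This is exactly the point where the paper inserts Lemma \ref{onda0}: without it, the Hurwitz argument only controls the zeros of $\chi_n$ below a level $L$ and says nothing about $\lambda_k^{(n)}$ once $k$ exceeds the number of zeros of $\chi$ below $L$. Your two arguments for infinitude both fall short as written. First, the growth estimate $|\phi_j(\ell)|\le\ell^{j+1}m(\ell)^j/j!$ only shows $\chi$ is entire of order $\le 1$; with that bound, ``$\chi$ is not a polynomial'' does not preclude $\chi(\lambda)=e^{b\lambda}P(\lambda)$ with $P$ a polynomial, i.e.\ finitely many zeros (Hadamard would settle it if one proved order $<1$, which needs a sharper bound on $\phi_j(\ell)$, e.g.\ of the type $C^j/(j!)^2$). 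Second, the ``self-adjoint realization with compact resolvent'' you invoke is Proposition \ref{formica}, which is in Section \ref{sec_dnb}, not Section \ref{preliminare}, and is proved there under the extra hypothesis $m_0=m_{\ell_m}=0$. Theorem \ref{speriamo} does not impose that: if $dm$ has an atom at $0$ or at $\ell$ then the Green operator $\cK$ of \eqref{coppa} is \emph{not} injective (for instance $\cK\,\mathbf 1_{\{0\}}\equiv 0$ when $m_0>0$, since $K(\cdot,0)\equiv 0$), so $-D_mD_x$ does not have a compact resolvent on $L^2(dm)$ in the literal sense. The argument is repairable --- one can check $\ker\cK\subset\mathrm{span}\bigl(\mathbf 1_{\{0\}},\mathbf 1_{\{\ell\}}\bigr)$, hence at most two-dimensional, so the compact symmetric operator $\cK$ on the infinite-dimensional $L^2(dm)$ still has infinitely many nonzero eigenvalues --- but you do not carry this out, and it pulls in machinery that the paper deliberately develops only later and under restrictive hypotheses. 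The paper avoids all of this with Lemma \ref{onda0}: the min--max formula \eqref{estate} together with $k$ disjointly supported test functions (available exactly because $dm$ is not a finite sum of deltas) gives $\sup_{n>k}\lambda_k^{(n)}<\infty$, which combined with Lemma \ref{nonnabruna} yields both the infinitude of the $\lambda_k$ and \eqref{europa} in one stroke. You need to supply some version of that step.
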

 Since by hypothesis the
supports of $dm_n$ and $dm$ are all included in a common compact
subset, the above weak convergence of $dm_n$ towards $dm$  is
equivalent to the vague convergence.
The proof of the above theorem in given in Section \ref{completo}.


\medskip

We describe now another general result relating self--similarity to
the spectrum edge, whose application will be relevant below when
studying subdiffusive random walks. Recall the definition
\eqref{emme} of $E_m$.

\begin{Th}\label{tacchino}
Suppose that $m : [0, \infty) \rightarrow [0,\infty)$ is a random
process such that
 \begin{itemize}

\item[(i)] $m(0)=0$,

\item[(ii)]  $m$ is c\`{a}dl\`{a}g and increasing    a.s.,

  \item[(iii)]  $m$ has stationary increments,

  \item[(iii)]  $m$ is
self--similar, namely there exists $\a>0$ such that for all $\g>0$
the processes $\bigl( m(x)\,:\, x \geq 0\bigr)$ and $ \bigl(
\g^{1/\a} m(x/\g) \,:\, x \geq 0 \bigr)$ have the same law,

\item[(iv)]   extending $m$ to all $\bbR$ by setting $m\equiv 0$ on $(-\infty,0)$,
 for any  $x \in \bbR$ with probability one $x$ is not a jump point of $m$.
 \end{itemize}
Then, a.s.  all eigenvalues of  the  operator $- D_m D_x$ with
Dirichlet conditions outside $(0,1)$ are simple and positive, and
form a diverging sequence $\bigl(\l_k(m )\,:\, k \geq 1\bigr)$ if
labeled in increasing order. The same holds for the eigenvalues
$\bigl(\l_k(m^{-1}  )\,:\, k \geq 1\bigr)$ of  the operator $-
D_{m^{-1}}   D_x$ with Dirichlet conditions outside $(0,m(1))$,
where $m^{-1}$ denotes the c\`{a}dl\`{a}g  generalized inverse of
$m$, i.e.
\begin{equation} m^{-1} (t)= \inf\{ s\geq 0\,:\, m(s)>t\}\,,\qquad  t \geq
0\,.
\end{equation}
Moreover, if there exists $x_0>0$ such that
\begin{equation}\label{kikokoala0}
\bbE\left[ \sharp\{ k \geq 1 \,:\, \l_k (m  ) \leq x_0 \} \right] <
\infty\,,
\end{equation} then  there exist positive constants $c_1,
c_2 $ such that
\begin{equation}
 c_1  x^{\frac{\a}{1+\a} }  \leq \bbE\left[ \sharp\{ k \geq 1 \,:\,
\l_k (m  ) \leq x \} \right] \leq c_2x^{\frac{\a}{1+\a} } \,, \qquad
\forall x \geq 1\,.\label{kikokoala1} \end{equation}
 Similarly, if
there exists $x_0>0$ such that
\begin{equation}\label{kikokoala00}
  \bbE\left[ \sharp\{ k \geq 1 \,:\, \l_k (m^{-1}  ) \leq x_0 \}
\right]<\infty\,,
\end{equation}
then  there exist positive constants $c_1, c_2 $ such that
\begin{equation}
 c_1 x^{\frac{\a}{1+\a} }  \leq \bbE\left[ \sharp\{ k
\geq 1 \,:\, \l_k (m^{-1}  ) \leq x \} \right] \leq
c_2x^{\frac{\a}{1+\a} } \,, \qquad \forall x \geq
1\,.\label{kikokoala2}
\end{equation}
\end{Th}
Strictly speaking, in the above theorem  we had to write $-D_{ m_*
}D_x$ and  $-D_{(m^{-1} )_*} D_x$ instead of $- D_m  D_x$ and $-
D_{m^{-1}}   D_x$, respectively, where
\begin{equation}\label{percome} m_*(x) =
\begin{cases}
0 & \text{ if } x\leq 0\,,\\
m(x) & \text{ if } 0 \leq x \leq 1 \,,\\
m(1) & \text{ if } x \geq 1\,,
\end{cases}
\qquad (m^{-1})_*(x)=
\begin{cases}
0 & \text{ if } x\leq 0\,,\\
m^{-1} (x) & \text{ if } 0 \leq x \leq m (1)  \,,\\
m^{-1} \bigl( m(1)\bigr) & \text{ if } x \geq m(1)\,.
\end{cases}
\end{equation}
This will be understood also below, in Theorems \ref{affinita} and
\ref{affinitabis}.
 Since $m $ is c\`{a}dl\`{a}g, it has a
countable (finite or infinite) number of jumps $\{z_i\}$. For $x
\geq 0$ it holds
 \begin{equation}\label{matriosca}
 m^{-1} (x)=\begin{cases} y & \text{ if } y =m(x)\,, \; x \in [0,\infty)
 \setminus
 \{z_i\}\,,\\
z_i & \text{ if } x\in [m(z_i-), m(z_i)] \text{ for some $i$} \,.
\end{cases}
\end{equation}
Since we have assumed $E_m=[0,\infty)$ a.s.,  $m^{-1}$ must be
continuous a.s. (observe that the jumps of $m^{-1}$ correspond to
the flat regions of $m$).

\medskip
 The proof of the above theorem is given in Section
\ref{persico} and is based on the Dirichlet-- Neumann bracketing
developed in Section \ref{sec_dnb} (cf. Theorem \ref{DNB}). When $m$
is a stable subordinator \eqref{kikokoala0} and \eqref{kikokoala00}
are fulfilled  (see the proof of Theorem \ref{affinita} and
\ref{affinitabis}).

\medskip

  As
application of Theorem \ref{speriamo} and Theorem  \ref{tacchino},
 we consider special families of
  subdiffusive random trap and barrier models (cf. \cite{ABSO},
\cite{KK}, \cite{FIN}, \cite{BC1}, \cite{BC2},  \cite{FJL} and
references therein). To this aim we fix a family  $\cT:= \{\t(x)
\,:\, x \in \bbZ\}$ of  positive i.i.d. random variables in the
domain of attraction of a one--sided $\a$--stable law, $0<\a<1$.
This is equivalent to the fact  that there exists some function
$L_1(t)$ slowly varying
 as $t \rightarrow \infty$ such that
$$ F(t)= \bbP( \t(x) > t) = L_1(t) t^{-\a}\,, \qquad t>0\,.$$
Let us define the  function $h$ as
 \begin{equation}\label{roman1aia} h (t)= \inf\{ s\geq 0 \,:\, 1/F(s)\geq t\}\,.
 \end{equation}
Then, by Proposition 0.8 (v) in \cite{R} we know that
\begin{equation}\label{roman1}
h(t)=  L_2 (t)t^{1/\a}\, \qquad t>0\,, \end{equation} for some
function $L_2$ slowly varying as $t \rightarrow \infty$.

\medskip

  Finally, we denote by
  $V$ the  double--sided $\a$--stable subordinator defined
on some probability space $(\Xi ,\cF , \cP)$ (cf. \cite{B} Section
III.2). Namely, $V$ has a.s. c\`{a}dl\`{a}g paths,  $V(0)=0$ and
$V$ has non-negative independent increments such that for all $s<t$
\begin{equation}\label{salutare}
E\Big[ \exp \big\{ -\lambda [ V(t)-V(s)] \big\} \Big] \;=\;
\exp\{-\lambda ^\a (t-s)\}\,.
\end{equation}
(Strictly speaking, inside the exponential in the r.h.s.   there
should  be an extra positive factor $c_0$ that we have fixed equal
to  $1$). The sample paths of $V$ are  strictly increasing and of
pure jump type, in the sense that
$V(u) = \sum_{0<v\le u} \{ V(v) - V(v-)\}$.
Moreover, the random set $\{(u,V(u) - V(u-))\,:\, u \in \bbR, \;
V(u)> V(u-) \}$ is a Poisson point process  on $\bbR \times \bbR_+$
with intensity $c w^{-1-\alpha} du\,dw$ , for some $c>0$. Finally,
we denote by $V^{-1}$ the generalized inverse function $
 V^{-1}(t)= \inf\{ s \in \bbR\,:\, V(s) > t\}$.
 Since $V$ is strictly increasing $\cP$--a.s., $V^{-1}$ has continuous paths $\cP$--a.s.

%


For random trap models we obtain:
\begin{Th}\label{affinita} Fix $a \geq 0 $ and  let  $\cT=\{
\t(x)\}_{x \in \bbZ}$ be a family of positive i.i.d. random
variables
in the domain of attraction of an $\a$--stable law, $0<\a<1$.
If $a>0$, assume  also that   $\t(x)$ is bounded from below by a
non--random positive constant a.s.

\smallskip

Given  a realization of $\cT$,  consider the $\cT$--dependent  trap
model $\{X(t)\}_{t\geq 0}$  on $\bbZ$ with transition rates
\begin{equation}\label{rata1} c (x,y)=
\begin{cases}   \t (x) ^{-1+a} \t (y)^{a} &  \text{ if }\;
|x-y|=1\,\\
0 &  \text{ otherwise}\,.
\end{cases}
\end{equation}
Call $ \l^{(n)}_1(\cT)< \l^{(n)}_2(\cT) < \cdots <
\l^{(n)}_{n-1}(\cT)$ the (simple and positive) eigenvalues of the
Markov  generator of $X(t) $ with Dirichlet conditions outside $(0,n
)$. Then
\begin{itemize}

\item[i)]

 For each $k \geq 1$, the $\cT$--dependent random vector
\begin{equation}\label{suono}
 \g^2 L_2 (n)  n^{1+\frac{1}{\a} }  \bigl(\l^{(n)}_1 (\cT) ,
\cdots, \l_k ^{(n)}(\cT) \bigr) \end{equation} weakly converges to
the $V$--dependent random vector
$$ \bigl( \l_1 (V) , \dots , \l_k (V) \bigr)\,,$$
where $ \g= \bbE \left( \t(0)^{-a}  \right)$, the slowly varying
function $L_2$ has been defined in \eqref{roman1} and $\{\l_k
(V)\,:\, k \geq 1\}$ denotes the family of the (simple and positive)
eigenvalues of the generalized differential operator $-D_V D_x$ with
Dirichlet conditions outside $(0,1)$.

\item[ii)]
If $a=0$ and $\bbE\left( \exp\{ -\l \t(x)\}\right )= \exp\{
-\l^\a\}$, then in \eqref{suono} the quantity $L_2(n)$ can be
replaced by the constant $1$.

\item[iii)]
 There exist positive constants
$c_1, c_2$ such that
\begin{equation}\label{leoncino}
c_1  x^{\frac{\a}{1+\a} }  \leq \bbE\left[ \sharp\{ k \geq 1 \,:\,
\l_k (V) \leq x \} \right] \leq c_2x^{\frac{\a}{1+\a} } \,, \qquad
\forall x \geq 1\,.
\end{equation}

\end{itemize}
\end{Th}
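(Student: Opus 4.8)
\textbf{Proof plan for Theorem \ref{affinita}.}

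The strategy is to reduce each assertion to the general results of Theorem \ref{speriamo} and Proposition \ref{tacchino} by identifying, for the $\cT$--dependent trap model, the correct scale function $S_n$ and speed measure $m_n$, and then by averaging over $\cT$ via the representation of the annealed law as a $V$--dependent walk. First I would unravel the definitions in Section 2 for the jump rates \eqref{rata1}: solving the recursion \eqref{ninna} for $c(x,y)=\t(x)^{-1+a}\t(y)^a$ gives explicit $U_n$ and $H_n$. For the pure trap case $a=0$ we have symmetric-up-to-$H$ structure with $U_n\equiv$ const and $H_n(x)=\t(x)$ up to the accelerating factor; for general $a\ge 0$ one gets $U_n(j/n)= (\t(j/n)/\t((j-1)/n))^{-a}$ type telescoping products and $H_n(k/n)=\t(k/n)^{1-2a}$ up to constants — in any case $U_n$ is a product of i.i.d. ratios so $S_n(k/n)=\sum_{j\le k}U_n(j/n)$ concentrates around $k\,\bbE(\t(0)^{-a})/?$ by the law of large numbers, which is where the constant $\g=\bbE(\t(0)^{-a})$ enters. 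After rescaling the lattice by $1/n$ and accelerating time by $n^{1+1/\a}$ (up to the slowly varying factor $L_2(n)$), $x^{(n)}_k=S_n(k/n)$ converges to the identity $x$ on $[0,1]$ by the strong law, so $\ell_n\to 1$, while the normalized speed measure $dm_n$ — which is essentially $n^{-1/\a}$ times a sum of i.i.d. heavy-tailed weights placed near the points $k/n$ — converges weakly to the $\a$--stable subordinator measure $dV$ on $[0,1]$. This last step is precisely the annealed identification worked out in \cite{KK}, \cite{FIN}, \cite{BC1}: the law of $X^{(n)}$ averaged over $\cT$ equals the law of a $V$--dependent walk $\tilde X^{(n)}$ averaged over $V$, and under the stated normalization the discrete speed measures converge weakly (as random measures) to $dV$. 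The slowly varying correction $h(t)=L_2(t)t^{1/\a}$ from \eqref{roman1aia}--\eqref{roman1} is exactly the norming sequence for sums of the $\t$'s in the domain of attraction of an $\a$--stable law, which is why $\g^2 L_2(n) n^{1+1/\a}$ is the right time scale; the square on $\g$ and on the scale factor arises because the eigenvalue scaling combines the space scaling of $S_n$ and the time scaling of $m_n$ multiplicatively (an eigenvalue of $-D_mD_x$ scales like (length)$^{-1}\times$(mass)$^{-1}$).

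For part (i), once $\ell_n\to 1$ and $dm_n\Rightarrow dV$ (in the appropriate joint sense, $\cP$--a.s. or in distribution over $V$), I would simply invoke Theorem \ref{speriamo}: the hypotheses there are that $\ell_n\to\ell$, that $dm_n$ converges weakly to $dm$ with $0=\inf E_m$, $\ell=\sup E_m$, and $dm$ not a finite atomic combination. For $m=V$ restricted to $[0,1]$ (i.e. $m_*$ as in \eqref{percome}), the subordinator a.s. has $E_V=[0,1]$ and infinitely many atoms, so the hypotheses hold $\cP$--a.s.; Theorem \ref{speriamo} then yields $\l^{(n)}_k\to\l_k(V)$ for every fixed $k$, and the eigenfunction convergence too, conditionally on $V$. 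Passing from this conditional statement to the claimed weak convergence of the vector \eqref{suono} is a routine application of the continuous mapping theorem together with the annealed representation: integrating the conditional convergence over the law of $V$ (using boundedness of the relevant functionals, or a Skorokhod coupling) gives convergence in distribution of $\g^2 L_2(n)n^{1+1/\a}(\l^{(n)}_1(\cT),\dots,\l^{(n)}_k(\cT))$ to $(\l_1(V),\dots,\l_k(V))$. Part (ii) is the special case where $\t(x)$ is itself (norm-) $\a$--stable: then $F(t)=\bbP(\t>t)\sim c\,t^{-\a}$ with $h(t)=c' t^{1/\a}$ exactly, so $L_2$ is genuinely constant and absorbing the constant into the fixed prefactor (as the parenthetical remark after \eqref{salutare} does with $c_0$) lets one replace $L_2(n)$ by $1$; there is essentially nothing to prove beyond bookkeeping of constants.

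Part (iii) is a direct corollary of Proposition \ref{tacchino} applied to $m=V$. The double-sided $\a$--stable subordinator satisfies hypotheses (i)--(iv) of that Proposition: $V(0)=0$, càdlàg increasing a.s., stationary independent increments, and self-similarity with exponent $\a$ — indeed \eqref{salutare} gives $V(t)\overset{d}{=}t^{1/\a}V(1)$, which is the required scaling $\bigl(\g^{1/\a}V(x/\g)\bigr)\overset{d}{=}\bigl(V(x)\bigr)$; and condition (iv), that any fixed $x$ is a.s. not a jump point, holds for any Lévy process. Hence Proposition \ref{tacchino} gives, provided the integrability condition \eqref{kikokoala0} holds for some $x_0>0$, the two-sided bound \eqref{kikokoala1}, which is exactly \eqref{leoncino} with the exponent $\a/(1+\a)$. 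The one remaining point is to verify \eqref{kikokoala0}: as promised in the text after Proposition \ref{tacchino}, there is a simple argument — bound the eigenvalue counting function below $x_0$ by a capacity or Dirichlet-form estimate, using that $V(1)$ has finite negative moments (equivalently, the density of $V(1)$ is bounded near $0$, a standard fact for stable subordinators), so $\bbE[\sharp\{k:\l_k(V)\le x_0\}]$ is controlled by $\bbE[V(1)^{-\text{const}}]<\infty$.

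\textbf{Main obstacle.} The genuinely substantive step is establishing the weak convergence $dm_n\Rightarrow dV$ (with $\ell_n\to 1$) in a form strong enough to feed into Theorem \ref{speriamo} and then transfer through the annealed representation — i.e. correctly identifying the normalizing constants $\g^2$ and $L_2(n)$ and justifying that the annealed law of $X^{(n)}$ is the $V$--average of $\tilde X^{(n)}$, with the discrete speed measures of $\tilde X^{(n)}$ converging to $dV$. For the pure trap model ($a=0$) this is essentially the Fontes--Isopi--Newman construction \cite{FIN}; for $a>0$ (the asymmetric trap model) it requires the extra hypothesis that $\t(x)$ is bounded below, which is used to control the telescoping products defining $U_n$ so that $S_n$ still converges to the identity — this is the content of \cite{BC1} and is where the lower-bound assumption on $\t$ is genuinely needed. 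Everything downstream of this convergence (the two limit theorems and the counting bound) is a clean application of the already-established Theorems \ref{speriamo}, \ref{DNB} and Proposition \ref{tacchino}.
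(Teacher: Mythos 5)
Your overall plan for parts (i) and (ii) is essentially the paper's route: introduce the FIN/BC1 coupling $\t_n(x)=G^{-1}\bigl(n^{1/\a}[V(x+1/n)-V(x)]\bigr)$ so that $\{\t_n(x)\}_{x\in\bbZ_n}$ has the same law as $\{\t(nx)\}_{x\in\bbZ_n}$ for each fixed $n$, read off $U_n,H_n,S_n,m_n$ from \eqref{ninna}--\eqref{treno3}, prove $\ell_n\to1$ (strong LLN for the triangular array $\g^{-2}\t_n(\tfrac{j-1}{n})^{-a}\t_n(\tfrac{j}{n})^{-a}$ — this is precisely where the lower boundedness of $\t$ when $a>0$ is used) and $dm_n\Rightarrow dV_*$ a.s.\ in $V$ (the paper invokes Proposition 5.1 of \cite{BC1} here), then apply Theorem \ref{speriamo} conditionally on $V$ and transfer to a distributional statement via the equality in law of the two random fields. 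For (ii) the paper switches to the cleaner coupling \eqref{guerra}, under which $H_n(x)=\Delta_n V(x)$ and $dm_n$ is literally a Riemann-type discretization of $dV_*$; your ``bookkeeping of constants'' is roughly this.

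The real gap is in your treatment of part (iii), specifically the verification of the integrability condition \eqref{kikokoala0}. You propose to control $\bbE\bigl[\sharp\{k:\l_k(V)\le x_0\}\bigr]$ by appealing to finite negative moments of $V(1)$. But \eqref{materazzi} only gives $\l_1(V)\ge 1/\bigl(\ell_m\,V(1)\bigr)$, i.e.\ it bounds the \emph{first} eigenvalue; it says nothing about how many eigenvalues sit below a fixed $x_0$. Knowing $\bbE[V(1)^{-p}]<\infty$ for all $p$ lets you control $\bbP(\l_1(V)\le x_0)$, but to bound the full counting function you need some form of additivity over a partition. The paper's Lemma \ref{dimitri} supplies exactly this: it splits the jumps of $V$ on $[0,1]$ into ``big'' jumps (size $>1/2$, there are a.s.\ finitely many $N$ of them, with $\bbE N<\infty$) and ``small'' jumps, partitions the complement of small neighborhoods of the big-jump locations into $R$ subintervals on which the small-jump part $V^{(1)}$ has increment in $[1/2,1)$ (with $R\le 2V^{(1)}(1)$, finite expectation), applies \eqref{materazzi} to conclude no eigenvalue $\le 2$ on any such subinterval, applies the Dirichlet--Neumann bracketing (Corollary \ref{mela}) to get $\cN^{[0,1]}_{V,D}(1)\le 2R+\sum_i\cN^{[y_i-\e,y_i+\e]}_{V,D}(1)$, and finally proves a separate claim that each big-jump neighborhood contributes at most one eigenvalue $\le 1$ (this uses \eqref{girotondo}-type regularity of eigenfunctions from Lemmas \ref{sanvalentino}--\ref{martin} plus orthogonality). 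None of this is captured by ``$V(1)$ has finite negative moments,'' and a naive uniform partition of $[0,1]$ would fail because subinterval masses can be arbitrarily large due to the big jumps — so the big/small jump decomposition is an essential ingredient, not a refinement.

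Two smaller remarks. First, the minor heuristic ``an eigenvalue scales like (length)$^{-1}$(mass)$^{-1}$'' is a correct way to motivate the $n^{1+1/\a}$ and $\g^2$; the paper realizes this via Lemma \ref{similare}. Second, you attribute the required self-similarity to the scaling $V(t)\overset{d}{=}t^{1/\a}V(1)$; Proposition \ref{tacchino} needs the process-level identity $\bigl(m(x)\bigr)_{x\ge0}\overset{d}{=}\bigl(\g^{1/\a}m(x/\g)\bigr)_{x\ge0}$, which holds for the subordinator by \eqref{salutare} and independence of increments — this you do state correctly, it just deserves the full process form when invoked.
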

The  above random walk $X(t)$ can be described as follows: after
arriving at site $x \in \bbZ$ the particle waits an exponential time
of mean
$$
\frac{ \t( x )^{1-a} }{ \t(x-1) ^a + \t(x+1)^a}\,,
$$
after that  it jumps to $x-1$ and $x+1$  with probability given
respectively by
$$
\frac{
\t (x-1) ^a
}{ \t ( x-1) ^a+ \t (x+1) ^a} \qquad \text{ and }\qquad \frac{
\t ( x+1) ^a
}{ \t ( x-1) ^a+ \t( x+1) ^a}\,.
$$
The  random walk $X(t)$ is called  random trap model following
\cite{BC1}, although according to our initial terminology the name
would be correct only when $a=0$.  Sometimes we will  also refer to
the case $a \in (0,1]$ as   {\em generalized random trap model}. The
additional assumption concerning the bound from below of $\t(x)$
when $a>0$ can be weakened. Indeed, as pointed out in the proof, we
only  need  the validity of strong LLN for a suitable triangular
arrays of  random variables.

\smallskip

Of course, one can consider also the diffusive  case. Extending the
results of \cite{BD} we get
\begin{Pro}\label{maschio1} Fix $a\geq 0$ and  let  $\cT=\{ \t(x)\}_{x \in
\bbZ}$ be a family of positive  random variables, ergodic w.r.t.
spatial translations and such that $ \bbE( \t(x)  ) < \infty$,
$\bbE( \t(x) ^{-a}) < \infty$.  Given  a realization of $\cT$,
consider the $\cT$--dependent  trap model $\{X(t)\}_{t\geq 0}$  on
$\bbZ$ with transition rates \eqref{rata1} and call $
\l^{(n)}_1(\cT)< \l^{(n)}_2(\cT) < \cdots < \l^{(n)}_{n-1}(\cT)$ the
(simple and positive) eigenvalues of the Markov  generator of $X(t)
$ with Dirichlet conditions outside $(0,n )$. Then for each $k \geq
1$ and for a.a. $\cT$,
\begin{equation}
 n^{2 }  \bbE( \t(x) ^{-a} ) \bbE( \t(x) ) \l_k ^{(n)}(\cT)\rightarrow  \p^2 k^2 \,. \end{equation}
\end{Pro}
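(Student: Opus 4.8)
The plan is to apply the spectral continuity theorem, Theorem \ref{speriamo}, to the rescaled walks $X^{(n)}$ obtained from the trap model with rates \eqref{rata1}. First I would identify the functions $U_n$, $H_n$ associated to these rates on $\bbZ_n$. For the rates $c(x,y)=\t(x)^{-1+a}\t(y)^a$ we have $c(x,x-1)=c(x-1,x)=\t(x-1)^a\t(x)^a\cdot\t(x-1)^{-1}\cdot\t(x)^{-1}\cdot(\t(x-1)\t(x))$... more carefully: $c(x,x-1)/c(x,x+1)=\t(x-1)^a/\t(x+1)^a$, so the recursion \eqref{ninna} gives (after passing to the lattice $\bbZ_n$ and rescaling by $n$, i.e. setting $X^{(n)}(t)=\frac1n X(n^2 t)$ up to the normalizing constants) that $U_n(k/n)$ telescopes to a ratio of the form $\t(k-1)^a/\t(0)^a$ or similar, and $H_n(k/n)$ is proportional to $\t(k)^{1-a}\t(k+1)^{-a}\cdot\t(k-1)^{-a}$-type products. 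The key point is that after the space-time rescaling built into $X^{(n)}$ — lattice spacing $1/n$, time accelerated by $n^2\,\bbE(\t(x)^{-a})\bbE(\t(x))$ — the functions $S_n$ and $m_n$ of \eqref{treno1}--\eqref{treno3} become Birkhoff sums of stationary ergodic sequences.

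The second step is to invoke the ergodic theorem. By Birkhoff's ergodic theorem applied to the ergodic family $\cT$, one gets $S_n(1)=\frac1n\sum_{j=1}^n U_n(j/n)\to 1$ a.s. (after the correct normalization, using $\bbE(\t(x)^{-a})<\infty$ to control the relevant average), so $\ell_n\to\ell=1$; and similarly the measures $dm_n$, which are purely atomic with masses proportional to the $H_n(k/n)$ placed at the points $x_k^{(n)}=S_n(k/n)$, converge weakly a.s. to $c\,dx$ on $[0,1]$ for the appropriate constant $c$ — this is where both moment conditions $\bbE(\t(x))<\infty$ and $\bbE(\t(x)^{-a})<\infty$ enter, to guarantee that the two Birkhoff sums defining the positions and the masses both converge. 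After choosing the time normalization $n^2\bbE(\t(x)^{-a})\bbE(\t(x))$ precisely so that the limit measure is Lebesgue measure $dx$ on $(0,1)$, the limiting operator $-D_{dx}D_x=-\tfrac{d^2}{dx^2}$ on $(0,1)$ with Dirichlet conditions has eigenvalues $\pi^2 k^2$ (note: not $\pi^2 k^2/2$, since here there is no factor $1/2$ as in the SSRW — the generator \eqref{generatore} has total rate $c(x,x-1)+c(x,x+1)$, not $1$). Since $dx$ is not a finite combination of delta measures and has support $[0,1]$ with the right endpoints, Theorem \ref{speriamo} applies and yields $\l_k^{(n)}\to\pi^2 k^2$ for the eigenvalues of $-D_{m_n}D_x$, which by the bijection $T_n$ coincide with the eigenvalues of $-\bbL_n$, i.e. of the Dirichlet generator of $X^{(n)}$. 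Unwinding the time rescaling then gives $n^2\,\bbE(\t(x)^{-a})\bbE(\t(x))\,\l_k^{(n)}(\cT)\to\pi^2 k^2$.

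The main obstacle I expect is \emph{not} the abstract spectral step — that is handed to us by Theorem \ref{speriamo} — but rather the bookkeeping in the first two steps: correctly computing $U_n$ and $H_n$ from \eqref{ninna} for the rates \eqref{rata1} (keeping track of the free multiplicative constant), verifying that the random points $x_k^{(n)}$ are asymptotically equidistributed so that the atomic measures $dm_n$ genuinely converge vaguely to a multiple of Lebesgue measure rather than to some singular measure, and pinning down the exact normalizing constant so that it comes out to $\bbE(\t(x)^{-a})\bbE(\t(x))$ and the limit eigenvalues are exactly $\pi^2 k^2$. This requires a slightly careful two-sided use of the ergodic theorem: one Birkhoff average governs the horizontal spacing (the $U_n$'s), another governs the vertical masses (the $H_n$'s), and one must check these are compatible — specifically that $U_n(k/n)H_n(k/n)$ has the right mean so that $m_n(x_k^{(n)})\approx$ (const)$\cdot x_k^{(n)}$. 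A technical point worth flagging is that, unlike in Theorem \ref{affinita}, here there is no passage through an $\a$-stable subordinator; the ergodic theorem alone suffices precisely because of the finite-moment hypotheses, and the a.s. (quenched) statement follows because Birkhoff convergence is a.s. and Theorem \ref{speriamo} is a deterministic statement applied to each good realization of $\cT$.
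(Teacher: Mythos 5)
Your plan is correct and follows essentially the same route as the paper: compute $U_n,H_n$ from \eqref{ninna} (one finds $U_n(x)\propto n^{-1}\t(nx-1)^{-a}\t(nx)^{-a}$ and $H_n(x)\propto n^{-1}\t(nx)$), use the ergodic theorem to get $\ell_n\to 1$ and $dm_n\to dx$ on $[0,1]$ almost surely, and then apply Theorem \ref{speriamo} with limit operator $-d^2/dx^2$ and eigenvalues $\pi^2k^2$. The "bookkeeping" you flag is exactly what the paper's short proof carries out, including the approximation of the test function by indicators $\bbI([0,t))$ to verify the weak convergence of the atomic measures.
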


\medskip
Let us state our results concerning  random barrier models:
\begin{Th}\label{affinitabis}
Let  $\cT=\{ \t(x)\}_{x \in \bbZ}$ be a family of positive i.i.d.
random variables
  in the domain of attraction of an $\a$--stable law, $0<\a<1$.
 Given a
realization of $\cT$,   consider the $\cT$--dependent barrier
 model $\{X(t)\}_{t\geq 0} $  on $\bbZ$ with jump rates
\begin{equation}\label{festalibro} c  (x,y)=
\begin{cases}  \t (x\lor y)^{-1}
& \text{ if }
|x-y|=1\,\\
0 & \text{ otherwise}\,.
\end{cases}
\end{equation}
Call $  \l^{(n)}_1(\cT)<  \l^{(n)}_2(\cT) < \cdots <
\l^{(n)}_{n-1}(\cT)$ the eigenvalues of the Markov generator of
$X(t)$ with Dirichlet conditions outside $(0,1)$. Recall the
definition \eqref{roman1} of the positive  slowly varying function
$L_2$. Then:
\begin{itemize}

\item[i)]

For each $k \geq 1$, the $\cT$--dependent random vector
\begin{equation}\label{uomoragno}
 L_2 (n ) n^{1+\frac{1}{\a}}  \bigl(\l^{(n)}_1 (\cT) ,
\dots , \l_k ^{(n)}(\cT) \bigr)
\end{equation}
 weakly converges to the
$V$--dependent  random vector
$$ \bigl(  \l_1 (V^{-1}) , \dots ,   \l_k (V^{-1} ) \bigr)\,,$$
where
 $\{ \l_k (V^{-1})\,:\, k \geq 1\}$ denotes the
family of the (simple and positive) eigenvalues of the generalized
differential operator $-D_{V^{-1} } D_x$ with Dirichlet conditions outside
$(0,V(1))$.

\item[ii)]
If $\bbE ( e^{-\l \t(x) }) = e^{-\l^\a} $ then in \eqref{uomoragno}
the quantity $L_2(n)$ can be replaced by the constant $1$.

\item[iii)]
 There exist positive  constants $c_1, c_2$ such
that
\begin{equation}\label{leoncinobis} c_1  x^{\frac{\a}{1+\a} }  \leq \bbE\left[
\sharp\{ k \geq 1 \,:\,  \l_k (V^{-1} ) \leq x \} \right] \leq
c_2x^{\frac{\a}{1+\a} } \,,\qquad  \forall x \geq 1\,.
\end{equation}
\end{itemize}
\end{Th}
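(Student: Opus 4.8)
The strategy is to put the barrier model into the canonical form $-D_m D_x$ using the general reduction of Section \ref{preliminare}, then to identify the limiting speed measure as $dV^{-1}$ and invoke Theorem \ref{speriamo} and Proposition \ref{tacchino}. First I would compute the functions $U_n, H_n$ for the rescaled barrier walk $X^{(n)}$ on $\bbZ_n$. Since the barrier rates \eqref{festalibro} are symmetric, $H$ is (up to a constant) the identity and $U_n(k/n)$ is a constant multiple of $\t(k)^{-1}$ once one accounts for the acceleration by $n^{1+1/\a}$ (apart the slowly varying factor); concretely, after accelerating, $c_n(k/n,(k-1)/n) = n^{1+1/\a} \t(k\vee(k-1))^{-1}$, so \eqref{ninna} gives $H_n \equiv $ const and $U_n(k/n) \propto n^{1+1/\a}\t(k)^{-1}$, hence by \eqref{treno1} the scale points are $x^{(n)}_k = S_n(k/n) = c\, n^{-1/\a}\sum_{j=1}^k \t(j)^{-1}$ while the speed measure $dm_n$ puts equal mass at each of these points. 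The key computation is then: $m_n(x) \approx (\text{const}) \cdot \#\{j: x^{(n)}_j \le x\}$, i.e. $m_n$ is essentially the inverse function of $S_n$, evaluated on the appropriate scale. Now $S_n(\lfloor nt\rfloor/n) = c\,n^{-1/\a}\sum_{j=1}^{\lfloor nt\rfloor}\t(j)^{-1}$; but $\t(j)^{-1}$ has all moments of negative order finite and the partial sums of the heavy tails $\t(j)$ are what converge, so one must be careful — the right statement is that $S_n$ converges (after the correct normalization absorbing $L_2(n)$) to the $\a$-stable subordinator $V$, because the scale is built from the $\t$'s themselves, not their reciprocals, once the bookkeeping of which variable sits where in \eqref{ninna}–\eqref{treno1} is done. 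Consequently $dm_n$, being the pushforward of counting measure on $\bbZ_n$ under $S_n$, weakly converges to $dV^{-1}$ on $(0,V(1))$, after passing to the $V$-dependent representation of the annealed law as in \cite{KK}.

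The second step is the passage to the annealed/$V$-dependent picture. As recalled in the Introduction (following \cite{KK}, \cite{FIN}, \cite{BC1}), the law of $X^{(n)}$ averaged over $\cT$ coincides with the law of a suitable $V$-dependent random walk $\tilde X^{(n)}$ averaged over $V$; for the barrier model the relevant object is precisely the one whose scale function converges to $V$ and whose speed measure therefore converges to $dV^{-1}$ (with support $[0,V(1)]$, since $V$ is strictly increasing so $V^{-1}$ is continuous and $E_{V^{-1}}=[0,V(1)]$ a.s.). I would make this rigorous by applying a strong law of large numbers / a.s. weak-convergence statement to the (triangular array of) normalized partial sums of the $\t(j)$'s, exactly as the scaling $L_2(n) n^{1+1/\a}$ in \eqref{uomoragno} is designed to produce: the factor $n^{1/\a} L_2(n)^{-1}$ is the norming constant for the stable limit of $\sum_{j=1}^n \t(j)$, and the extra factor $n$ comes from the generalized-differential-operator scaling $x^2 \leftrightarrow (\text{length})(\text{mass})$. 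Once $\ell_n \to V(1)$ a.s. and $dm_n \Rightarrow dV^{-1}$ a.s. (under $\cP$) are established, and since $dV^{-1}$ is a.s. not a finite combination of atoms (it is in fact continuous), Theorem \ref{speriamo} applies $\cP$-a.s.\ and yields $\l^{(n)}_k(\cT) \to \l_k(V^{-1})$ for the $V$-dependent walk; averaging/using the annealed identification then gives the weak convergence of the vector \eqref{uomoragno} to $(\l_1(V^{-1}),\dots,\l_k(V^{-1}))$, proving (i). For (ii), when $\bbE(e^{-\l\t(x)}) = e^{-\l^\a}$ the $\t$'s are themselves one-sided $\a$-stable, so the norming constants are exactly $n^{1/\a}$ with no slowly varying correction, hence $L_2(n)$ may be replaced by $1$; this is a direct specialization.

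For part (iii) I would verify that $V^{-1}$ satisfies the hypotheses of Proposition \ref{tacchino}: $V$ is an $\a$-stable subordinator, so $m=V$ has $m(0)=0$, is càdlàg increasing a.s., has stationary independent increments, and is self-similar with index $\a$ (from \eqref{salutare}, $\g^{1/\a}V(x/\g)$ has the same law as $V(x)$); moreover a fixed $x$ is a.s.\ not a jump point. Then Proposition \ref{tacchino} gives the two-sided bound \eqref{kikokoala2} for the annealed counting function of $-D_{V^{-1}}D_x$ on $(0,V(1))$, which is exactly \eqref{leoncinobis}, provided the integrability condition \eqref{kikokoala00} holds; as promised after Proposition \ref{tacchino}, this last point is checked by a simple argument (e.g.\ a crude lower bound on $\l_1$ via the Rayleigh quotient, or monotonicity in the interval length, to control $\bbE[\#\{k:\l_k(V^{-1})\le x_0\}]$ for small $x_0$). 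The main obstacle I anticipate is the first step — correctly tracking, through the reduction \eqref{ninna}–\eqref{treno3}, which of $S_n$ and $m_n$ carries the heavy-tailed $\t$'s and which carries their reciprocals, and then upgrading the distributional stable limit for $\sum\t(j)$ to the a.s.\ weak convergence of the random measures $dm_n$ that Theorem \ref{speriamo} requires; the rest is bookkeeping of norming constants and an appeal to the already-proven general theorems.
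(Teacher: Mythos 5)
Your overall plan matches the paper's: couple the environment to the $\alpha$-stable subordinator $V$, reduce the barrier walk to a generalized operator $-D_{m_n}D_x$, identify $\ell_n\to V(1)$ and $dm_n\Rightarrow d(V^{-1})_*$ $\cP$-a.s., then invoke Theorem \ref{speriamo} for (i)--(ii) and Proposition \ref{tacchino} for (iii). But two steps are not right as written. First, the initial bookkeeping of \eqref{ninna} is backwards: for symmetric rates one gets $U(x+1)/U(x)=c(x,x-1)/c(x,x+1)=\tau(x+1)/\tau(x)$, hence $U_n\propto \tau_n$ and $H_n\equiv$ const, not $U_n\propto\tau^{-1}$; you then self-correct the conclusion, but the stated derivation does not lead there. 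Second, the claim that $S_n\to V$ follows from ``a strong law of large numbers'' is not available: $\tau$ has infinite mean for $0<\alpha<1$, so even $n^{-1/\alpha}\sum_{j\le n}\tau(j)$ converges only in distribution, not a.s. The a.s.\ convergence used by the paper is a consequence of the explicit coupling $\tau_n(x)=G^{-1}\bigl(n^{1/\alpha}[V(x+1/n)-V(x)]\bigr)$ together with the deterministic pointwise limit $g_n(x)\to x$ of \cite{FIN}[Lemma 3.1] and a uniform control splitting big and small increments of $V$ (Lemma \ref{franchi1}); this is what makes $S_n\to V$ hold uniformly $\cP$-a.s.

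The more substantial gap is in (iii). A ``crude Rayleigh-quotient lower bound on $\lambda_1$'' --- i.e.\ \eqref{materazzi}, which gives $\lambda_1\ge 1/V(1)$ --- controls only the bottom eigenvalue, and $\bbP(\lambda_1\le x_0)<1$ does not imply $\bbE\bigl[\sharp\{k:\lambda_k(V^{-1})\le x_0\}\bigr]<\infty$; the counting function can be arbitrarily large on the event where $\lambda_1$ is small. The paper's verification of \eqref{kikokoala00} is genuinely harder: it isolates the finitely many jumps $y_1,\dots,y_N$ of $V$ in $[0,1]$ of size $>1/2$, picks small neighborhoods $U_i$ of the corresponding flat stretches of $V^{-1}$ so that \eqref{materazzi} forces $\cN^{U_i}_{V^{-1},D}(1)=0$, partitions the complement of $\cup_i U_i$ in $[0,V(1)]$ into at most $2V^{(1)}(1)+N$ subintervals of length $\le 1/2$ each with no Dirichlet eigenvalue $\le 1$, and then applies the Dirichlet--Neumann bracketing Corollary \ref{mela} to get $\cN^{[0,V(1)]}_{V^{-1},D}(1)\le V^{(1)}(1)+4N$, which has finite expectation by the Master formula for the Poisson jump measure. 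Without this decomposition around the big jumps and the bracketing inequality, \eqref{kikokoala00} --- and hence \eqref{leoncinobis} --- is not established.
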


\smallskip

Again, one can consider also the diffusive  case. Extending the
results of \cite{BD} we get
\begin{Pro}\label{maschio2} Let  $\cT=\{ \t(x)\}_{x \in
\bbZ}$ be a family of positive random variables, ergodic w.r.t.
spatial translations and such that $ \bbE( \t(x)  ) < \infty$. Given
a realization of $\cT$, consider the $\cT$--dependent barrier model
$\{X(t)\}_{t\geq 0}$  on $\bbZ$ with transition rates
\eqref{festalibro} and call $ \l^{(n)}_1(\cT)< \l^{(n)}_2(\cT) <
\cdots < \l^{(n)}_{n-1}(\cT)$ the (simple and positive) eigenvalues
of the Markov  generator of $X(t) $ with Dirichlet conditions
outside $(0,n )$. Then for each $k \geq 1$ and for a.a. $\cT$,
\begin{equation}
 n^{2 }  \bbE( \t(x) ) \l_k ^{(n)}(\cT)\rightarrow \p^2 k^2 \,.
\end{equation}
\end{Pro}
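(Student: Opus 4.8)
The plan is to deduce Proposition \ref{maschio2} from Theorem \ref{speriamo}, after placing the barrier model inside the general set-up of Section 2 and performing an elementary dilation of the speed measure. First I would compute the data $U_n,H_n$ of the barrier walk: since $c(x,x-1)=\tau(x)^{-1}$ and $c(x,x+1)=\tau(x+1)^{-1}$, the recursion \eqref{ninna} gives, up to the irrelevant global constant, $U_n(j/n)=\tau(j)$ and $H_n\equiv 1$. Relabelling the sites $\{1,\dots,n-1\}=\bbZ\cap(0,n)$ as $\{1/n,\dots,(n-1)/n\}\subset\bbZ_n$ does not change the generator matrix, hence not its spectrum, so the $\l^{(n)}_k(\cT)$ are exactly the eigenvalues of the Dirichlet operator $-\bbL_n$ of the corresponding rescaled walk $X^{(n)}$ on $\bbZ_n\cap(0,1)$. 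By \eqref{treno1}--\eqref{treno3} the attached scale points and speed measure are $x^{(n)}_k=S_k:=\sum_{j=1}^{k}\tau(j)$, $\ell_n=S_n$, $dm_n=\sum_{k=0}^{n}\delta_{x^{(n)}_k}$, and, via the bijection induced by $T_n$ (Section \ref{preliminare}), the $\l^{(n)}_k(\cT)$ coincide with the eigenvalues of $-D_{m_n}D_x$ with Dirichlet conditions outside $(0,\ell_n)$.

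Since $\ell_n=S_n\to\infty$ a.s., Theorem \ref{speriamo} does not apply to $m_n$ directly, so I would rescale. Put $\bar\tau:=\bbE(\tau(x))\in(0,\infty)$ and define the c\`adl\`ag function $\hat m_n(x):=\tfrac1n\,m_n(n\bar\tau\,x)$; its associated measure is $d\hat m_n=\tfrac1n\sum_{k=0}^{n}\delta_{x^{(n)}_k/(n\bar\tau)}$ and $\ell_{\hat m_n}=S_n/(n\bar\tau)$. Performing the substitution $x\mapsto n\bar\tau\,x$ in the integral identity \eqref{pioggiapasso} one checks that $F$ satisfies \eqref{pioggiapasso} for $m_n$ with eigenvalue $\l$ on $(0,\ell_n)$ if and only if $x\mapsto F(n\bar\tau\,x)$ satisfies it for $\hat m_n$ with eigenvalue $n^2\bar\tau\,\l$ on $(0,\ell_{\hat m_n})$; in general, dilating the argument of $m$ by $b$ while dilating its values by $a$ multiplies every eigenvalue by $b/a$, and here $b=n\bar\tau$, $a=1/n$. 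Hence the $k$-th Dirichlet eigenvalue of $-D_{\hat m_n}D_x$ equals $\mu^{(n)}_k:=n^2\bar\tau\,\l^{(n)}_k(\cT)$.

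It then remains to verify the hypotheses of Theorem \ref{speriamo} for the sequence $(\hat m_n)$, on a single event of probability one, with limiting measure $dm=dx$ on $[0,1]$ (that is $m(x)=0$ for $x<0$, $m(x)=x$ for $0\leq x\leq 1$, $m(x)=1$ for $x>1$). By Birkhoff's ergodic theorem $S_m/m\to\bar\tau$ a.s., whence $\ell_{\hat m_n}=S_n/(n\bar\tau)\to 1$ and, by the standard consequence, $\sup_{0\leq k\leq n}\bigl|x^{(n)}_k/(n\bar\tau)-k/n\bigr|\to 0$ a.s., so that $\int g\,d\hat m_n=\tfrac1n\sum_{k=0}^{n}g\bigl(x^{(n)}_k/(n\bar\tau)\bigr)=\tfrac1n\sum_{k=0}^{n}g(k/n)+o(1)\to\int_0^1 g(x)\,dx$ for every $g\in C_c(\bbR)$, i.e. $d\hat m_n\to dx$ weakly a.s. The measure $dx$ on $[0,1]$ is not a finite linear combination of delta measures and has support $[0,1]$, with extremes $0$ and $\ell=1$, so Theorem \ref{speriamo} applies and yields $\mu^{(n)}_k\to\l_k$, where $\l_k$ is the $k$-th Dirichlet eigenvalue of $-D_mD_x$ with $dm=dx$, i.e. of $-\tfrac{d^2}{dx^2}$ on $(0,1)$; differentiating \eqref{pioggiapasso} twice in this case gives $\l_k=\pi^2k^2$ (eigenfunction $\sin(\pi k x)$), and these eigenvalues are simple and positive. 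Therefore $n^2\bbE(\tau(x))\,\l^{(n)}_k(\cT)=\mu^{(n)}_k\to\pi^2k^2$ for a.a.\ $\cT$, which is the assertion.

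The only step that is not completely routine is the rescaling identity $\mu^{(n)}_k=n^2\bar\tau\,\l^{(n)}_k(\cT)$: one must keep careful track of the simultaneous dilation of the domain (factor $n\bar\tau$) and of the values of $m_n$ (factor $1/n$), since it is exactly the product of these two effects that produces the diffusive normalization $n^2\bbE(\tau(x))$. Everything else reduces to Birkhoff's theorem and a direct invocation of Theorem \ref{speriamo} (whose full strength, including the eigenfunction convergence, is not even needed here, only the eigenvalue part).
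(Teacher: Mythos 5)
Your proposal is correct and follows essentially the same route as the paper: both reduce the statement to Theorem \ref{speriamo} applied to a normalized speed measure that converges weakly, by the ergodic theorem, to Lebesgue measure on $[0,1]$, whose Dirichlet operator $-\frac{d^2}{dx^2}$ has eigenvalues $\pi^2k^2$. The only (cosmetic) difference is where the diffusive normalization enters: the paper builds it into the jump rates of a rescaled walk $X^{(n)}$ on $\bbZ_n$ (taking $H_n=1/n$, $U_n(x)=\t(nx)/(n\bbE(\t(0)))$ so that the eigenvalues are already $n^2\bbE(\t(0))\l_k^{(n)}(\cT)$), whereas you keep the original walk and extract the factor $n^2\bbE(\t(x))$ from an explicit dilation of $m_n$, in the spirit of Lemma \ref{similare}.
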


\bigskip

Theorem \ref{affinita} and \ref{affinitabis} cannot be derived by a
direct application of Theorem \ref{speriamo}.  Indeed, for any
choice of the sequence $c(n)>0$,   fixed a realization of $\cT$ the
measures $dm_n $ associated to the space--time rescaled random walks
$X^{(n)}(t)=n^{-1} X\bigl( c(n) t \bigr) $  do not converge to $dV$
or $d V^{-1}$ restricted to $(0,1)$, $(0,V(1))$ respectively. On the
other hand, for each $n$ one can define a random field $\cT_n$ in
terms of the $\a$--stable process $V$, i.e. $\cT_n = F_n (V)$,
having the same law of $\cT$ \cite{BC1,FIN}. Calling $\tilde X^{(n)}
$ the analogous of $X^{(n)}$ with jump rates defined in terms of
$\cT_n$, one has that the associated measures $d\tilde m_n$ satisfy
the hypothesis of Theorem \ref{speriamo}. This explains why Theorems
\ref{affinita} and \ref{affinitabis} give an annealed and not
quenched result.
 On the other hand, for the random walks $\tilde X^{(n)}$ the result
 is quenched, i.e.
 the convergence of the eigenvalues  holds  for almost all realizations of the
 subordinator $V$.
We refer to Sections \ref{affinita_proof} and \ref{primopiano} for a
more detailed discussion of the above coupling and for the proof of
Theorems \ref{affinita} and \ref{affinitabis}.

\subsection{Outline of the paper}   The paper is structured as
follows.  In Section \ref{da_a} we  explain how the spectral
analysis of $-\bbL_n$ reduces to the spectral analysis of the
operator $-D_{m_n} D_x$. In Section \ref{preliminare} we recall some
basic facts of generalized second order operators.   In particular,
we characterize the eigenvalues of $-\bbL_n$ as zeros of a suitable
entire function.
In Section \ref{completo} we prove  Theorem \ref{speriamo}.
 In
Section \ref{sec_dnb} we prove the Dirichlet--Neumann bracketing.
This result, interesting by itself,  allows us to prove Theorem
\ref{tacchino} in Section \ref{persico}. Finally, we move to
applications: in Section \ref{affinita_proof} we prove Theorem
\ref{affinita}, in Section \ref{primopiano} we prove Theorem
\ref{affinitabis}, while in Section \ref{diffondo} we prove
Propositions \ref{maschio1} and \ref{maschio2}.


\section{From
$-\bbL_n$
 to $- D_{m_n } D_x$ }\label{da_a}

 Recall the definition of the local operator $L_n$ given
in \eqref{puntino} and of the bijection $T_n$ given in
\eqref{gallina}.
\begin{Le}
Given  functions $f,g : [0,1]\cap \bbZ_n \rightarrow \bbR$, the
system of identities
\begin{equation}\label{gattino}
 L_n f (x) = g(x) \,, \qquad \forall x \in (0,1) \cap
\bbZ_n\,, \end{equation} is equivalent to the system
\begin{equation}\label{nero}
f(x)=f(0)+\sum_{j=1} ^{nx} U_n(j/n)\left( \frac{f(1/n)-f(0)}{
U_n(1/n)}+ \sum_{k=1}^{j-1} H_n(k/n) g(k/n) \right) \,, \qquad
\forall x \in (0,1]\cap \bbZ_n\,,
\end{equation}
where we convey to set $\sum_{k=1}^0 H_n(k/n) g(k/n)=0$. Setting $F=
T_n f $, $G=T_n g$ and $$ b =\frac{F(x^{(n)}_1)-F(0)}{ U_n(1/n)}-
H_n(0)G(0)\,,$$ \eqref{nero} is equivalent to
\begin{equation}\label{luce}
F(x)= F(0)+ b x+ \int_0^x dy \int _{[0,y) }G(z) dm_n(z)\,, \qquad
\forall x \in [0, \ell_n ]\,.
\end{equation}
In particular, $f: [0,1 ]\cap \bbZ_n \rightarrow \bbR$ is an
eigenvector with eigenvalue $\l$ of the operator $-\bbL_n$ if and
only if  $T_n f$ is an eigenfunction with eigenvalue $\l$ of the
generalized differential operator $-D_{m_n} D_x$ with Dirichlet
conditions outside $(0,\ell_n)$.
\end{Le}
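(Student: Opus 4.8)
The plan is to establish the two claimed identity equivalences by direct computation and then read off the eigenvalue correspondence.

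\emph{Equivalence of \eqref{gattino} and \eqref{nero}.} First I would rewrite \eqref{gattino} at $x=k/n$: using \eqref{nanna} on $\bbZ_n$ one has $c_n(k/n,(k-1)/n)=\bigl(H_n(k/n)U_n(k/n)\bigr)^{-1}$ and $c_n(k/n,(k+1)/n)=\bigl(H_n(k/n)U_n((k+1)/n)\bigr)^{-1}$, so multiplying through by $H_n(k/n)$ turns \eqref{gattino} into the first--order recursion
\[
\frac{f((k+1)/n)-f(k/n)}{U_n((k+1)/n)}-\frac{f(k/n)-f((k-1)/n)}{U_n(k/n)}=H_n(k/n)\,g(k/n),\qquad 1\le k\le n-1,
\]
for the increment ratios $d_k:=\bigl(f(k/n)-f((k-1)/n)\bigr)/U_n(k/n)$. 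Solving this, $d_j=d_1+\sum_{l=1}^{j-1}H_n(l/n)g(l/n)$ with $d_1=(f(1/n)-f(0))/U_n(1/n)$, and then summing $f(k/n)=f(0)+\sum_{j=1}^{k}U_n(j/n)d_j$, produces precisely \eqref{nero}; conversely, the first difference of \eqref{nero} gives back $f(k/n)-f((k-1)/n)=U_n(k/n)d_k$ and the second difference gives back the recursion, hence \eqref{gattino}. (Note $g(0)$ and $g(1)$ do not appear in \eqref{nero}.)

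\emph{Equivalence of \eqref{nero} and \eqref{luce}.} Here I would exploit that $F=T_nf$ and $G=T_ng$ are continuous and piecewise linear with nodes among $x^{(n)}_0=0<x^{(n)}_1<\dots<x^{(n)}_n=\ell_n$, with $x^{(n)}_j-x^{(n)}_{j-1}=U_n(j/n)$, while $dm_n=\sum_{k=0}^n H_n(k/n)\,\delta_{x^{(n)}_k}$. Thus the inner integral is the step function $\int_{[0,y)}G\,dm_n=\sum_{k:\,x^{(n)}_k<y}H_n(k/n)g(k/n)$, and hence the right--hand side of \eqref{luce} is again continuous and piecewise linear with nodes among the $x^{(n)}_k$; so it is enough to check \eqref{luce} at $x=x^{(n)}_k$, $0\le k\le n$. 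At those points a short summation using $\int_{x^{(n)}_{j-1}}^{x^{(n)}_j}\bigl(\int_{[0,y)}G\,dm_n\bigr)\,dy=U_n(j/n)\sum_{l=0}^{j-1}H_n(l/n)g(l/n)$ reduces the right--hand side to $f(0)+\sum_{j=1}^{k}U_n(j/n)\bigl(d_1+\sum_{l=1}^{j-1}H_n(l/n)g(l/n)\bigr)$, i.e.\ to \eqref{nero} at $x=k/n$; the key bookkeeping point is that the prescribed value $b=(F(x^{(n)}_1)-F(0))/U_n(1/n)-H_n(0)G(0)=d_1-H_n(0)g(0)$ is exactly what cancels the contribution of the atom of $dm_n$ at $0$, which is absent from \eqref{nero}.

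\emph{Eigenvalue correspondence and main obstacle.} Finally, by the definitions of $\cV_n$ and $\bbL_n$, a nonzero $f$ is an eigenvector of $-\bbL_n$ with eigenvalue $\l$ iff $f(0)=f(1)=0$ and $L_nf=-\l f$ on $(0,1)\cap\bbZ_n$, i.e.\ \eqref{gattino} holds with $g=-\l f$; taking $G=T_n(-\l f)=-\l F$ in the two equivalences above, and using $f(0)=f(1)=0\iff F(0)=F(\ell_n)=0$ (so $b=(F(x^{(n)}_1)-F(0))/U_n(1/n)$, the derivative number $F'_-(0)$), the identity \eqref{luce} becomes $F(x)=bx-\l\int_0^x dy\int_{[0,y)}F(z)\,dm_n(z)$ with $F(0)=F(\ell_n)=0$, which is exactly the eigenfunction equation \eqref{pioggiapasso} for $-D_{m_n}D_x$ with Dirichlet conditions outside $(0,\ell_n)$. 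Since $T_n$ maps bijectively onto $C_n[0,\ell_n]$ and, conversely, any solution of \eqref{pioggiapasso} is linear off $E_{m_n}$, hence lies in $C_n[0,\ell_n]$ and is of the form $T_nf$, this gives the asserted bijection between the two eigenspaces. I expect no genuine analytic obstacle: the only points requiring care are the bookkeeping of the boundary atom at $0$ (absorbed into the definition of $b$) and the remark that \eqref{luce}, being an identity between two piecewise--linear functions with a common node set, need only be verified at the nodes.
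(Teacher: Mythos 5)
Your proof is correct and follows essentially the same route as the paper's: rewrite \eqref{gattino} as a first--order recursion for the increment ratios $\Delta f(j)/U_n(j/n)$, solve and sum to reach \eqref{nero}, then observe that both sides of \eqref{luce} are continuous piecewise--linear with node set $\{x_k^{(n)}\}$ so it suffices to compare at the nodes, where a direct summation (together with the cancellation of the atom at $0$ via the specific choice of $b$) reduces \eqref{luce} to \eqref{nero}. The paper passes through an intermediate identity with $\int_{(0,y)}$ in place of $\int_{[0,y)}$ before declaring the last equivalence trivial, while you handle the boundary atom directly inside the one computation; this is a cosmetic, not a substantive, difference.
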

\begin{proof}  For simplicity of notation
we write $U,H$ instead of $U_n,H_n$. Moreover, we use the natural
bijection $\bbZ \ni k \rightarrow k/n \in \bbZ_n$, denoting the
point $k/n$ of $\bbZ_n$ simply as $k$.
 Setting $\D f (j)= f(j)-f(j-1)$, we can
rewrite \eqref{gattino} by means of the recursive identities
$$
\frac{\D f (j+1) }{U(j+ 1)}= H(j) g(j) + \frac{\D f(j)}{U(j)}
\,,\qquad \forall j \in (0,n)\cap \bbZ \,.$$ By a simple telescopic
argument the above system is equivalent to
$$ f(x)=
f(0)+\sum_{j=1} ^{x} U(j)\left( \frac{\D f(1)}{ U(1)}+
\sum_{k=1}^{j-1} H(k) g(k) \right) \,, \qquad \forall x \in
(0,n]\cap \bbZ\,, $$ with the convention that the last sum is zero
if $j=1$.   This proves that \eqref{gattino} is equivalent to
\eqref{nero}. Using $T_n,F,G, m_n$ we can rewrite \eqref{nero} as
\begin{equation}\label{tempoko}
F(x)= F(0)+ \int_0^x dy \left(   \frac{F(x^{(n)}_1)-F(0)}{
U(1)}+\int _{(0,y) }G(z) dm_n(z) \right)\,, \qquad \forall x \in (0,
\ell_n ]\,.
\end{equation}
 Trivially,
equation \eqref{tempoko} is equivalent to \eqref{luce}. Finally, the
conclusion of the lemma follows from the previous observations and
the
 discussion about the generalized differential operator $-D_m D_x $ given in
 Section \ref{mod_res}.
\end{proof}

\section{Generalized second order differential operators}\label{preliminare}

 For the reader's convenience and for next applications,  we recall
the definition of generalized differential operator. We mainly
follow \cite{KK0}, with some slight modifications that we will point
out. We refer to \cite{KK0}, \cite{DM} and \cite{Ma} for a detailed
discussion.

Let $m:\bbR\rightarrow [0,\infty)$ be a c\`{a}dl\`{a}g nondecreasing
function with $m(x) = 0$ for all $x<0$.  We define $m_x$ as the
magnitude of the jump of the function $m$ at the point $x$:
\begin{equation}\label{jumping}
 m_x=m(x)-m(x-)\,, \qquad x \in \bbR\,.
 \end{equation} We
define $E_m$ as the support of $dm$,
 i.e. the  set of
points where $m$ increases (see \eqref{emme}).
 We  suppose that  $E_m \not = \emptyset$,    $0 = \inf
E_m$ and   $\ell_m := \sup E_m<\infty$.

Given a continuous function $F(x)\in C([0,\ell_m]) $ and a
$dm$--integrable function $G$ on $[0, \ell_m]$ we write $ -D_m D_x
F=G$ if there exist complex constants $a,b$ such that
\begin{equation}\label{pioggia}
F(x)= a+bx-\int_0^x dy \int _{[0,y) } d m (z) G(z) \,,  \qquad
\forall x \in [0,\ell_m]\,.
\end{equation}
We remark
 that the integral term in equation \eqref{pioggia} can be written
also as
$$
\int_0^x dy \int _{[0,y) } d m (z) G(z)= \int _{[0,x]} (x-z) G(z)
dm(z)= \int_0^x dy \int _{[0,y] } d m (z) G(z) \,.
$$
We point out that   equation \eqref{pioggia} implies that $F$ is
linear on $[x_1,x_2]$ if $m$ is constant on $(x_1, x_2) \subset [0,
\ell_m]$.

 As discussed in \cite{KK0}, the function $G$
is not univocally determined from $F$. To get uniqueness, one can
for example fix the value of $b$ and $b-\int_{[0, \ell_m]} G(s)
dm(s)$. These values are called  {\sl derivative numbers} and
denoted by $F'_-(0)$ and $F_+' (\ell_m)$, respectively.
 Indeed,     in \cite{KK0}  the domain $\cD_m$ of the
differential operator $-D_m D_x$ is defined as the family of
complex--valued extended functions $F[x]$, given by the triple
$\bigl( F(x), \, F'_-(0),\,F'_+ (\ell_m) \bigr)$, while the authors
set  $-D_m D_x F[x]= G(x)$. We prefer to avoid the notion of
extended functions here, since not necessarily.

It is simple to check that the function $F$ satisfying
 \eqref{pioggia} fulfills the following properties: for
each $x \in [0, \ell_m)$ the function $F(x)$  has right derivative
$F'_+(x)$, for each $x \in (0, \ell_m]$ the function $F(x)$ has left
derivative $F'_-(x)$ and
 \begin{align}
 & F'_+(x)= b - \int _{[0,x]} G(s) dm (s)\,, \qquad x \in [0,
 \ell_m) \,,\label{napoleone1}\\
 & F'_- (x)= b - \int _{[0, x)} G(s) dm (s) \,, \qquad x \in (0,
 \ell_m]\,.\label{napoleone2}
 \end{align}
In view of the definition of $F'_-(0)$ and $F'_+(\ell_m) $, the
above identities extend to any $x \in [0, \ell_m]$. In addition, if
$m_0=0$ then $F'_-(0)= \lim _{ \e \downarrow 0} F'_+(\e)$, while if
$m_{\ell_m} =0$ then $F'_+(\ell_m) = \lim _{\e \downarrow 0} F'_-
(\ell_m - \e)$.

 As discussed in \cite{KK0}, fixed  $\l \in \bbC$ there exists
a unique function $F \in C([0, \ell_m])$ solving equation
\eqref{pioggia}    with $G=\l F$ for fixed $a,b$. In other words,
fixed $F(0)$ and $F'_-(0)$ there exists a unique solution of the
homogeneous differential equation
\begin{equation}\label{ferrari0}
 -D_m D_x F= \l F\,.
 \end{equation}
 Given  $\l \in \bbC$, we define $\varphi(x,\l)  $ and $\psi (x,\l)$
as the solutions \eqref{ferrari0}
  satisfying respectively the initial conditions
\begin{align}
& \varphi (0,\l) =1\,, \qquad  \varphi '_- (0,\l)=0\,,\label{ferrari1}\\
& \psi (0,\l)=0\,, \qquad  \psi'_- (0,\l)=1 \,.\label{ferrari2}
\end{align}
 It is known  that each function $F \in C([0,
 \ell_m])$ satisfying \eqref{ferrari0} is a linear combination of
 the independent solutions
$\varphi(\cdot,\l)$ and $\psi(\cdot,\l)$. Finally, $F\not \equiv 0 $
is called an eigenfunction of the operator $-D_m D_x$ with Dirichlet
[Neumann] b.c. if $F$ solves \eqref{ferrari0} for some $\l \in
\bbC$, and moreover $F(0)=F(\ell_m)=0$ [$F'_-(0)=F'_+(\ell_m)=0$].
By the above observations, we get that $F$ is a Dirichlet
eigenfunction if and only if $F$ is a nonzero multiple of $ \psi (x,
\l)$ for $\l \in \bbC$ satisfying $\psi (\ell_m, \l)=0$, while $F$
is a Neumann eigenfunction if and only if $F(x)$ is a nonzero
multiple of $\varphi  (x,\l)$ with $\l \in \bbC$ satisfying
\begin{equation}\label{storia}
\int _0 ^\ell  \varphi (s, \l) dm  (s) =0\,. \end{equation} In
particular, the  Dirichlet and the  Neumann eigenvalues are all
simple.

\medskip

We collect in the following lemma some known results concerning the
Dirichlet eigenvalues and eigenfunctions:
\begin{Le}\label{volare}
Let $m:\bbR\rightarrow  [0,\infty)$ be a nondecreasing
c\`{a}dl\`{a}g function such that $m(x)=0$ for $x<0$, $0=\inf E_m $,
$\ell_m:= \sup E_m<\infty$. Then the differential operator $-D_{m}
D_x $ with Dirichlet conditions outside $(0, \ell_m)$  has a
countable (finite or infinite) family of eigenvalues, which are all
positive and simple. The set of eigenvalues has no accumulation
points.  In particular, if there is an  infinite number of
eigenvalues $\{\l_n\}_{n\geq 1}$, listed in increasing order, it
must be $\lim _{n\uparrow \infty}\l_n=\infty$.

 The above  eigenvalues coincide with the
zeros of the entire function $\bbC\ni \l \rightarrow \psi (\ell_m,
\l) \in \bbC$.  The eigenspace associated to the eigenvalue $\l$ is
spanned by the real function $\psi (\cdot, \l)$. Moreover, $F$ is an
eigenfunction of $-D_m D_x$ with Dirichlet conditions outside
$(0,\ell_m)$ and associated eigenvalue $\l$ if and only if
\begin{equation}\label{nonnina}
F(x)= \l \int _{[0, \ell_m)} G_{0, \ell_m}(x,y) F(y) d m (y)\,,
\qquad \forall x \in [0,\ell_m]\,,
\end{equation}
where, given an interval $[a,b]$,  the  Dirichlet Green function
$G_{a,b}:[a,b ]^2 \rightarrow \bbR$ is defined as
\begin{equation}\label{barbamammaX}
G_{a,b}(x,y)= \begin{cases}
 \frac{ (y-a)(b-x) }{ b-a } & \text{ if } y \leq x\,, \\
\frac{ (x-a)(b-y)}{b-a} & \text{ if } x\leq y\,.
\end{cases}
\end{equation}
In particular, for any  Dirichlet eigenvalue $\l$ it holds
\begin{equation}\label{materazzi}
\l \geq \bigl[\ell_m m (\ell_m)]^{-1}\,.
\end{equation}
\end{Le}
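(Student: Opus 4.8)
The plan is to establish the four assertions in turn: (1) the eigenvalues are exactly the zeros of $\l\mapsto\psi(\ell_m,\l)$ and the eigenspaces are spanned by $\psi(\cdot,\l)$; (2) every Dirichlet eigenvalue is positive and simple, with the set of eigenvalues having no accumulation point; (3) the integral reformulation \eqref{nonnina} via the Green function; and (4) the lower bound \eqref{materazzi}. Step (1) is immediate from the discussion preceding the lemma: a Dirichlet eigenfunction must satisfy $F(0)=0$, hence (by the uniqueness of solutions of \eqref{ferrari0} with prescribed $F(0),F'_-(0)$) it is a multiple of $\psi(\cdot,\l)$, and the remaining boundary condition $F(\ell_m)=0$ reads $\psi(\ell_m,\l)=0$. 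Since $\psi(\cdot,\l)$ solves a real equation when $\l$ is real and $\psi(0,\cdot),\psi'_-(0,\cdot)$ are real, one checks that $\psi(x,\l)$ is real for real $\l$, so the eigenspace is spanned by the real function $\psi(\cdot,\l)$.

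For step (2), positivity is cleanest to extract from \eqref{nonnina} or from an energy identity: if $-D_mD_xF=\l F$ with $F(0)=F(\ell_m)=0$, then using \eqref{napoleone1} (i.e. $F'_+(x)=b-\int_{[0,x]}\l F\,dm$) together with an integration-by-parts/summation-by-parts argument one obtains
\begin{equation*}
\int_0^{\ell_m}\bigl(F'_+(x)\bigr)^2\,dx \;=\; \l\int_{[0,\ell_m)}|F(y)|^2\,dm(y)\,,
\end{equation*}
so $\l\geq 0$, and $\l=0$ would force $F$ constant, hence $F\equiv 0$ by the boundary conditions — contradiction. Simplicity was already recorded above (each Dirichlet eigenvalue has a one-dimensional eigenspace spanned by $\psi(\cdot,\l)$). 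That the zero set of $\psi(\ell_m,\cdot)$ has no accumulation point follows because $\l\mapsto\psi(\ell_m,\l)$ is entire (this is the standard fact that the solution of \eqref{ferrari0} depends analytically on $\l$, obtainable from the Volterra-type series expansion of $\psi$) and is not identically zero — indeed $\psi(\ell_m,0)=\ell_m>0$ since $\psi(x,0)=x$.

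For step (3), I would verify directly that the function on the right-hand side of \eqref{nonnina} solves \eqref{pioggia} with $G=\l F$: splitting the Green function \eqref{barbamammaX} at $y=x$ and differentiating twice in the generalized sense reproduces $-D_mD_x$ of the right-hand side as $\l F$, while the boundary values $G_{0,\ell_m}(0,y)=G_{0,\ell_m}(\ell_m,y)=0$ give the Dirichlet conditions; conversely any Dirichlet eigenfunction satisfies this identity by uniqueness. Finally, for \eqref{materazzi}, normalize the eigenfunction so that $\|F\|_\infty=|F(x_0)|=1$ for some $x_0\in[0,\ell_m]$; evaluating \eqref{nonnina} at $x_0$ and bounding $G_{0,\ell_m}(x_0,y)\leq \ell_m$ (since $G_{a,b}(x,y)\leq b-a$) and $|F(y)|\leq 1$ gives $1\leq \l\,\ell_m\,m(\ell_m)$, which is \eqref{materazzi}. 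The main obstacle is the rigorous bookkeeping in the energy identity and in the second-generalized-derivative computation for the Green function, since these must be done carefully with Lebesgue–Stieltjes integrals and the one-sided derivative formulas \eqref{napoleone1}–\eqref{napoleone2} rather than by formal integration by parts; none of it is deep, but it requires attention to the atoms of $dm$ at the endpoints.
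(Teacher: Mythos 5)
Your proposal is correct and follows essentially the same blueprint as the paper's proof (Appendix~\ref{berlinale}): identify the Dirichlet eigenvalues with the zeros of the entire function $\l\mapsto\psi(\ell_m,\l)$, get discreteness from entire--function theory, pass from the integral form of the eigenvalue equation to \eqref{nonnina} using the boundary condition $F(\ell_m)=0$, and read off \eqref{materazzi} from the uniform bound $0\leq G_{0,\ell_m}\leq\ell_m$. The genuine divergence is how you get positivity and simplicity of the zeros: the paper simply cites \cite{KK0}[Section 2] for the fact that $\psi(\ell_m,\cdot)$ is entire with only positive simple zeros, whereas you propose a self-contained energy identity $\int_0^{\ell_m}|F'_+|^2\,dx=\l\int_{[0,\ell_m)}|F|^2\,dm$. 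That route is sound and more informative, but it must be written in sesquilinear form — you wrote $(F'_+)^2$ rather than $|F'_+|^2$, and one should start with complex $F,\l$, deduce first that $\l$ is real, then nonnegative, then nonzero; the Stieltjes integration by parts at the atoms of $dm$ is exactly the bookkeeping carried out later in the proof of Proposition~\ref{formica}, identity \eqref{formina}, so if you go this way you are essentially front-loading a computation the paper defers. Your discreteness argument (nonvanishing because $\psi(\ell_m,0)=\ell_m>0$, since $\psi(x,0)=x$) is a slight simplification of the paper's (which excludes identical vanishing by noting all zeros lie in $(0,\infty)$), and is cleaner because it does not use positivity. For \eqref{nonnina}, you verify that the Green operator inverts $-D_mD_x$ with Dirichlet b.c.\ and then invoke uniqueness; the paper instead eliminates the constant $b$ algebraically from the Volterra form using $F(\ell_m)=0$, which is a touch shorter since it never needs to establish separately that $G_{0,\ell_m}$ is the Dirichlet Green kernel. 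Both routes are correct and of comparable length.
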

\begin{proof} The first part of the lemma follows easily from the analysis
given in \cite{KK0} concerning the entire function  $\psi(\cdot,
\l_m)$. The characterization \eqref{nonnina} follows by
straightforward computations from the definition of Dirichlet
eigenfunctions. To conclude, we observe that  \eqref{nonnina}
implies  $ \|F\|_\infty \leq \l \|F\|_\infty \ell_m dm ([0,
\ell_m))$,  since trivially $0\leq G_{0, \ell_m} (x,y) \leq \ell_m$.
\eqref{materazzi} then follows. \end{proof}

 As discussed in
\cite{KK0}, page 29, the function $\varphi$ can be written as
$\l$--power series $ \varphi(s,\l)= \sum_{j=0}^\infty (-\l)^j
\varphi_j (s)$ for suitable functions $\varphi_j$. Therefore the
l.h.s. of \eqref{storia} equals $ \sum_{j=0}^\infty (-\l)^j \int
_{(0,1)} \varphi_j(s) dm (s)$. From the bounds on $\varphi_j$
 one derives that the l.h.s. of
\eqref{storia} is an entire function in $\l$, thus implying that its
zeros (or equivalently the eigenvalues of the operator $-D_mD_x$
with Neumann b.c.) form a discrete subset of $ [0,\infty )$.
Moreover (cf. \cite{KK0}) the eigenvalues are nonnegative and $0$
itself is an eigenvalue.
\section{Proof of Theorem \ref{speriamo} }\label{completo}
We divide the proof in subsections.
\subsection{Eigenvalues as zeros of entire
functions}\label{sec_zeros}

At this point, we have  reduced  the analysis of the spectrum  of
the differential operator  $-D_m D_x$ with Dirichlet conditions
outside $(0,\ell_m)$ to the analysis of the zeros of the entire
function $\psi(\ell, \cdot)$. As in \cite{KZ} and \cite{Ze} a  key
tool is  the following result, whose proof can be found in
\cite{Di}, page 248:
\begin{Le}\label{dindon} Let $\Xi $ be  a metric
space, $f:\Xi\times  \bbC  \rightarrow \bbC $ be a continuous
function such that for each $\a \in \Xi$ the map $f(\a,\cdot )$ is
an entire function. Let $V \subset \bbC $ be an open subset  whose
closure  $\bar V$  is compact, and let $\a_0\in\Xi$ be such that no
zero of the function $f( \a_0, \cdot) $ is on the boundary of $V$.
Then there exists a neighborhood $W$ of $\a_0$ in $\Xi$ such that:
\begin{enumerate}

 \item
for any $\a \in W$, $f( \a,\cdot )$ has no zero on the boundary of
$V$,

\item
the sum of the orders of the zeros of $f(\a,\cdot )$ contained in
$V$ is independent of $\a$ as  $\a$ varies in $W$.
\end{enumerate}
\end{Le}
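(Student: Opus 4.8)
The plan is to reduce everything to a finite union of small disks around the zeros of $f(\alpha_0,\cdot)$ and then invoke Rouch\'{e}'s theorem on each of them. First I would dispose of the degenerate case $\partial V=\emptyset$: then $V$ is both open and closed in $\bbC$ (since $\bar V=V\cup\partial V=V$), so by connectedness of $\bbC$ and compactness of $\bar V$ we get $V=\emptyset$ and the statement is trivial with $W=\Xi$. So assume $\partial V\neq\emptyset$. Since by hypothesis $f(\alpha_0,\cdot)$ does not vanish on $\partial V$, it cannot be identically zero, hence, being entire, it has only finitely many zeros $z_1,\dots,z_p$ in the compact set $\bar V$; by hypothesis all of them lie in $V$. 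Let $n_j$ be the order of $z_j$ and set $N:=\sum_{j=1}^p n_j$; this will be the claimed $\alpha$--independent quantity.

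Next I would separate the zeros from $\partial V$ geometrically. Choose $r>0$ so small that the closed disks $\overline{D(z_j,r)}$, $1\le j\le p$, are pairwise disjoint and contained in $V$, and put $K:=\bar V\setminus\bigcup_{j=1}^p D(z_j,r)$. Then $K$ is compact and $f(\alpha_0,\cdot)$ has no zero on $K$, so $\delta:=\min_{z\in K}|f(\alpha_0,z)|>0$. Using the continuity of $f$ on $\Xi\times\bbC$ together with the compactness of $K$ (a finite--subcover argument of tube--lemma type), I would produce an open neighborhood $W$ of $\alpha_0$ in $\Xi$ such that $|f(\alpha,z)-f(\alpha_0,z)|<\delta$ for all $\alpha\in W$ and all $z\in K$. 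For $\alpha\in W$ and $z\in K$ this yields $|f(\alpha,z)-f(\alpha_0,z)|<\delta\le|f(\alpha_0,z)|$, so $f(\alpha,\cdot)$ is zero--free on $K$; since $\partial V\subset K$, conclusion (1) follows at once.

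Finally, for $\alpha\in W$ I would apply Rouch\'{e}'s theorem on each disk $D(z_j,r)$: its boundary circle lies in $K$, where the strict inequality $|f(\alpha,z)-f(\alpha_0,z)|<|f(\alpha_0,z)|$ holds, so $f(\alpha,\cdot)$ has exactly $n_j$ zeros (with multiplicity) in $D(z_j,r)$, the same number as $f(\alpha_0,\cdot)$. Moreover any zero of $f(\alpha,\cdot)$ lying in $V$ but outside all the disks would lie in $K$, where $f(\alpha,\cdot)$ does not vanish; hence the zeros of $f(\alpha,\cdot)$ in $V$ are precisely those contained in $\bigcup_j D(z_j,r)$, and the sum of their orders equals $\sum_j n_j=N$ for every $\alpha\in W$, which is conclusion (2). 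The only genuinely delicate point is the third step of the middle paragraph, that is, upgrading the pointwise continuity of $f$ on the product space $\Xi\times\bbC$ to uniform closeness of $f(\alpha,\cdot)$ and $f(\alpha_0,\cdot)$ over the whole compact set $K$; this is exactly where the compactness of $\bar V$ (hence of $K$) is used, and everything else is a routine application of Rouch\'{e}'s theorem.
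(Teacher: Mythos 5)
Your proof is correct. The paper itself does not supply a proof but simply cites Dieudonn\'e \cite{Di}, page 248, and the argument there is exactly the one you give: isolate the finitely many zeros in $\bar V$ by small disjoint disks, use compactness of the complement $K$ and joint continuity of $f$ to get the uniform estimate $|f(\alpha,z)-f(\alpha_0,z)|<\delta\leq|f(\alpha_0,z)|$ on $K$ for $\alpha$ near $\alpha_0$, and then apply Rouch\'e's theorem on each disk.
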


From now on,  let   $m_n$ and $m$ be as in Theorem \ref{speriamo}.
Given $\l \in \bbC$, define
$\psi (x,\l)$  as
the solution on the homogeneous differential equation
\eqref{ferrari0} satisfying the initial condition
\eqref{ferrari2}.
 Define similarly  
 $\psi^{(n)}(x,\l)$ by replacing $m$ with $m_n$.
The following fact will be fundamental in the application of Lemma \ref{dindon}.

\begin{Le}\label{sanpietro}
Define  $\psi(\cdot,\l)$, $\psi(\cdot, \ell)$ as $\psi(\ell,\l)$, $\psi(\ell_n, \l)$ on $(\ell, \ell+1]$,
$(\ell_n, \ell+1]$, respectively (note that $\ell_n < \ell+1$ eventually).
Fix a sequence $\l_n \in \bbC$ converging to some $\l_\infty \in \bbC$. Then
 $\psi_n(\cdot, \l_n) $ converges to $ \psi(\cdot , \l _\infty)$ as
$n \to \infty$ uniformly in $C([0, \ell+1])$.
\end{Le}
\begin{proof} The proof is similar to the first part of the proof of Theorem 1 in \cite{K}.
 As discussed
in \cite{KK0}, page 30,     one can write explicitly the power
expansion of the entire function $\bbC \ni \l\rightarrow
\psi^{(n)}(x,\l) \in \bbC$. In particular, it holds
$
 \psi^{(n)}(x,\l) = \sum _{j=0}^\infty (-\l)^j \psi^{(n)}_j
(x)$,
where $\psi^{(n)}_0(x)= x$, $\psi^{(n)}_{j+1}(x) = \int_0^x (x-s) \psi^{(n)}_j(s) dm
(s)$ for $j\geq 0$ and $x \in[0,\ell_n]$. Note that  in the above integrals  we do  not
need to  specify the border of the integration domain since the
integrand functions vanish both at $0$ and at $x$.
 By the same arguments used in \cite{KK0}[page 32] one gets  $\psi_k^{(n)}(x) \leq
 \left( \frac{x}{k+1}\right)^{k+1} \frac{m_n(x)}{k!}$ for $x\in [0, \ell_n]$.
 These bounds imply easily that
  the family $\cF$  of functions $\{ \psi^{(n)}(\cdot, \l_n) \}_n$ is uniformly bounded in $C([0, \ell+1])$.
Since
\begin{equation}\label{immacolata}
 \psi ^{(n)} (x, \l_n)= x- \l_n \int_0^x dy \int_{[0,y)} dm_n(z) \psi^{(n)} (z,\l_n)\,, \qquad x \in [0, \ell_n]\,,
\end{equation}
the above bounds imply also that the family $\cF$ is equicontinuous in   $C([0, \ell+1])$. By Ascoli--Arzel\`{a} theorem, $\cF$ is relatively compact. From the weak convergence of $dm_n$ to $dm$ and from \eqref{immacolata}, one gets that all limit functions $\tilde \psi$ satisfies
\begin{equation}\label{immacolatabis}
 \tilde \psi (x )= x- \l \int_0^x dy \int_{[0,y)} dm (z)\tilde \psi (z,\l)\,, \qquad x \in [0, \ell]\,,
\end{equation}
and is equal to $\tilde \psi(\ell)$ on $[\ell, \ell+1]$. Since the integral equation \eqref{immacolata} has a unique solution, given by $\psi(\cdot, \l)$, we get the thesis.
\end{proof}
By applying Lemma \ref{volare}, Lemma \ref{dindon} and Lemma \ref{sanpietro} we obtain:
\begin{Le}\label{nonnabruna} Let $m_n$ and $m$ be as in
Theorem \ref{speriamo}. Fix a  constant  $L>0$ different from the
Dirichlet eigenvalues of  $-D_m D_x$, and let $\{\l_i\,:\, 1 \leq i
\leq k_0\}$ be the Dirichlet eigenvalues of $-D_m D_x$ smaller than
$L$.
  Let   $\e>0$ be  such (i) $\l_{k_0}+\e<L$ and (ii)
 each
 interval $J_i:=[\l_i-\e, \l_i+\e]$ intersects $\{ \l_i\,:\, 1\leq i
 \leq k_0\}$
 only at $\l_i$, for any  $i:1\leq i \leq k_0$.
 Then
 there exists an integer $n_0$ such that:
\begin{itemize}

 \item[i)]  for all $n \geq n_0$, the spectrum of $-\bbL_n$ has only
one
 eigenvalue in $J_i$\,,

\item[ii)]
 for all $n \geq n_0$, $-\bbL_n$ has no eigenvalue inside $(0,
 L) \setminus \left( \cup _{i=1}^{k_0} J_i\right) $.
\end{itemize}
\end{Le}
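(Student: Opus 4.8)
The plan is to reduce the statement about the eigenvalues of $-\bbL_n$ to a statement about the zeros of the entire functions $\psi^{(n)}(\ell_n,\cdot)$ and $\psi(\ell,\cdot)$, exploiting Lemma \ref{volare} (which identifies the Dirichlet eigenvalues of $-D_{m_n}D_x$ with the zeros of $\psi^{(n)}(\ell_n,\cdot)$, and likewise for $-D_mD_x$) together with the fact, recalled in the discussion before Lemma \ref{volare} on the eigenvectors of $-\bbL_n$, that via the bijection $T_n$ the eigenvalues of $-\bbL_n$ coincide with the Dirichlet eigenvalues of $-D_{m_n}D_x$. Thus it suffices to prove: (a) for each $i$, for $n$ large, $\psi^{(n)}(\ell_n,\cdot)$ has exactly one zero (counted with multiplicity) in $J_i$; and (b) for $n$ large, $\psi^{(n)}(\ell_n,\cdot)$ has no zero in the compact set $K:=[0,L]\setminus\bigcup_{i=1}^{k_0}\mathrm{int}(J_i)$ (note all these zeros are real and positive by Lemma \ref{volare}, so working on $[0,L]$ rather than an open subset of $\bbC$ is harmless once we thicken slightly to genuinely open sets in $\bbC$).

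The key input is a continuity-in-$n$ statement for the solutions. First I would establish that the map $(\mu,\l)\mapsto\psi^{\mu}(\ell_\mu,\l)$ — where $\mu$ ranges over c\`adl\`ag nondecreasing functions supported in a fixed compact interval, equipped with the topology of vague/weak convergence of $d\mu$, and $\ell_\mu=\sup E_\mu$ — is jointly continuous; in fact, using the $\l$-power series representation $\psi(s,\l)=\sum_j(-\l)^j\psi_j(s)$ from \cite{KK0} (analogous to the one recalled for $\varphi$ after Lemma \ref{volare}), together with uniform bounds on the $\psi_j(s)$ in terms of $m(\ell_m)$ and $\ell_m$, one gets that $\l\mapsto\psi^{(n)}(\ell_n,\l)$ converges to $\l\mapsto\psi(\ell,\l)$ uniformly on compact subsets of $\bbC$. (The hypotheses of Theorem \ref{speriamo} — $\ell_n\to\ell$ and $dm_n\rightharpoonup dm$ with $\sup E_m=\ell$ — are exactly what is needed: $dm_n([0,y))\to dm([0,y))$ for all but countably many $y$, and the integrals defining the $\psi_j$ pass to the limit by dominated convergence.) This identifies the setup of Lemma \ref{dindon} with $\Xi=\bbN_+\cup\{\infty\}$ (suitably metrized so that $n\to\infty$), $\a_0=\infty$, and $f(n,\l)=\psi^{(n)}(\ell_n,\l)$, $f(\infty,\l)=\psi(\ell,\l)$.

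With this in hand, apply Lemma \ref{dindon} with $V$ a small open neighborhood in $\bbC$ of each interval $J_i$ chosen so that $\partial V$ contains no zero of $\psi(\ell,\cdot)$ (possible since the zeros are isolated and, by choice of $\e$, $\l_i$ is the only one in $J_i$): the sum of the orders of the zeros of $\psi^{(n)}(\ell_n,\cdot)$ in $V$ equals that for $\psi(\ell,\cdot)$, which is the order of the zero $\l_i$. Since the $-\bbL_n$-eigenvalues (equivalently, zeros of $\psi^{(n)}(\ell_n,\cdot)$) are all simple, this forces exactly one such eigenvalue in $V$, and being real and positive it lies in $J_i$; shrinking $V$ toward $J_i$ if necessary gives (i). For (ii), apply Lemma \ref{dindon} with $V$ a thin open neighborhood in $\bbC$ of the compact set $K$ containing no zeros of $\psi(\ell,\cdot)$ (again possible by isolation of zeros and our choice of $L,\e$); then the zero-count of $\psi^{(n)}(\ell_n,\cdot)$ in $V$ is $0$ for $n$ large, so $-\bbL_n$ has no eigenvalue in $K\supset(0,L)\setminus\bigcup_i J_i$. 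Taking $n_0$ to be the maximum of the finitely many thresholds produced (one per $J_i$, plus one for $K$) completes the proof.

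The main obstacle is the uniform-on-compacts convergence $\psi^{(n)}(\ell_n,\cdot)\to\psi(\ell,\cdot)$: one must control both the shift of the endpoint $\ell_n\to\ell$ and the convergence of the nested Volterra-type iterated integrals against the measures $dm_n$, and handle the at-most-countably-many bad points where $dm_n([0,y))$ may fail to converge to $dm([0,y))$ — these are absorbed by dominated convergence in the $dy$-integration since they form a Lebesgue-null set. Everything else is a routine application of Lemmas \ref{volare} and \ref{dindon}.
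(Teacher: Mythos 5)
Your proposal is correct and follows essentially the same route as the paper: identify the eigenvalues of $-\bbL_n$ and of $-D_mD_x$ with the zeros of the entire functions $\psi^{(n)}(\ell_n,\cdot)$ and $\psi(\ell,\cdot)$, prove the continuity $\psi^{(n)}(\ell_n,\l_n)\to\psi(\ell,\l)$ via the power--series representation, the $Ac^j/j!$ bounds and dominated convergence on the series (the paper passes the fixed--$j$ terms to the limit by testing $dm_n^{\otimes j}\rightharpoonup dm^{\otimes j}$ against a continuous compactly supported integrand, a minor variant of your $dy$--integration argument), and then apply Lemma \ref{dindon} with $V$ running over neighborhoods of the $J_i$ and of $(0,L)\setminus\cup_i J_i$.
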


\begin{proof}

We already know that the Dirichlet  eigenvalues of the operator
$-D_{m_n} D_x$ [$-D_mD_x$] are given by the zeros of the entire
function $ \psi^{(n)} (\ell_n, \cdot)$  [$\psi(\ell, \cdot )$].
Hence, it is natural to  derive the thesis by applying  Lemma
\ref{dindon} with different choices of $V$. More precisely,
we take  $\a_0 = \infty$ and $\Xi = \bbN_+ \cup \{\infty\}$
 endowed of any
metric $d$  such that all points $n \in \bbN_+$ are isolated w.r.t.
$d$ and $\lim _{n\uparrow \infty} d (n, \infty)=0$.  We  define
$f:\Xi \times  \bbC \rightarrow \bbC$ as
$$f(\a,\l)=
\begin{cases} \psi ^{(n)} (\ell_n,\l)  & \text{ if } \a=n \,,\\
\psi (\ell,\l)  & \text{ if } \a=\infty \,.
\end{cases}
$$
Finally, we choose  $V=(\l_i-\e, \l_i+\e)$ as $i$ varies in  $\{1,
\dots,  k_0\}$ and after that we take $V= (0,L ) \setminus
\left(\cup_{r=1}^{k_0} J_r \right)$. The thesis then easily follows
by applying  Lemma \ref{dindon}
 if we  prove that  $f$ is continuous. The nontrivial part is to
 prove that
$\lim_{n\uparrow \infty} \psi^{(n)} (\ell_n, \l_n) = \psi (\ell, \l )$
for any sequence  of complex numbers $\{\l_n\}_{n\geq 1} $
converging to some $\l\in \bbC$. This result follows from Lemma \ref{sanpietro}
and the equicontinuity of the family of functions  $\{\psi^{(n)}(\cdot, \l_n)\}_n$  in $C([0, \ell+1])$.
\end{proof}

\subsection{Minimum--maximum characterization of the
eigenvalues}\label{sec_minmax}

For the reader's convenience,  we list some vector spaces that will
be repeatedly used in what follows. We  introduce the vector spaces
$\cA(n)$ and $\cB (n)$  as
\begin{equation}\label{milano}\cA(n):=\left \{ f :[0,1]\cap \bbZ_n \rightarrow
\bbR\,:\, f(0)=f(1)=0\right\}\,, \qquad \cB(n)= T_n \cA (n)\,,
\end{equation}
where  the map $T_n$ has been  defined in \eqref{gallina}. Hence
$F\in \cB(n)$ if and only if (i) $F(0)=F(1)=0$, (ii) $F$ is
continuous and (iii)  $F$ is linear on all subintervals $[
x_{j-1}^{(n)}, x_j ^{(n)} ]$, $1\leq j \leq n$. Since we already
know that the eigenvalues and suitable associated eigenfunctions of
$-\bbL_n$ are real, we can think of $-\bbL_n$ as operator defined on
$\cA(n)$. Finally, given $a<b$  we write $C_0[a,b]$ for the family
of continuous functions $f:[a,b]\rightarrow \bbR$ such that
$f(a)=f(b)=0$.

\medskip
Let us  recall the min--max formula characterizing the $k$--th
eigenvalue $\l ^{(n)}_k$ of $-\bbL_n$, or equivalently of the
differential operator $-D_{m_n} D_x$ with Dirichlet conditions
outside $(0,\ell_n)$. We refer to \cite{CH1}, \cite{RS4} for more
details. First we observe the validity of the  detailed balance
equation: \begin{equation}\label{lavatrice} H_n (x) c_n\bigl(x ,
x+\frac{1}{n}\bigr)=\frac{1}{U_n (x+1/n)}= H_n (x+\frac{1}{n})
c_n\bigl(x+\frac{1}{n},x\bigr) \qquad \forall x\in \bbZ_n \,.
\end{equation}
Identifying $\cA(n)$ with $\{f: (0,1) \cap\bbZ_n \rightarrow
\bbR\}$, this implies that $-\bbL_n$ is a symmetric operator in
$L^2(
 (0,1)\cap \bbZ_n, \mu_n)$, where
 $\mu_n := \sum_{x \in (0,1)\cap \bbZ_n} H_n (x) \d_{x}$.
  Given $f \in \cA(n)$ we write $D_n(f)$ for the Dirichlet form
 $D_n(f):=\mu_n (f, -\bbL_n f) $. By simple computations, we obtain
$$
D_n(f)=  \sum _{j=1}^n U_n (j/n)^{-1} \bigl[\, f(j/n)- f ((j-1)/n)\,
\bigr]^2\,.
$$
Note that $D_n(f)=0$  with $f \in \cA(n)$ if and only if $f\equiv
0$.
 The min--max characterization of $\l^{(n)}_k$ is
 given by the formula
\begin{equation}\label{primavera}
 \l^{(n)}_k = \min
_{V_k}\max _{f \in V_k : f \not \equiv 0}
\frac{D_n(f)}{\mu_n(f^2)}\,,
\end{equation}
where $V_k$ varies among the $k$--dimensional subspaces of $\cA(n)$.
Moreover, the minimum is attained at $V_k=V_k ^{(n)}$, defined as
the subspace spanned by the eigenvectors $f_j^{(n)}$ associated to
the first $k$ eigenvalues  $\{\l^{(n)}_j\,:\, 1\leq j \leq k\}$.

We can rewrite the above min--max  principle  in terms of $F=T_n f$
and $dm_n$. Indeed,
 given $f \in \cA(n)$, the function  $F=T_n f $ is  linear between $x_{j-1}^{(n)}$ and
 $x_j^{(n)}$, thus implying that
 $$
   U_n (j/n)^{-1} \bigl[\, f(j/n)- f ((j-1)/n)\,
\bigr]^2=
 \int _{x^{(n)} _{j-1} } ^{ x^{(n)}_j} D_s F (s) ^2 ds\,.
$$
Hence,
 $
D_n(f) = \int _0 ^{\ell_n} D_s F (s)^2 ds $. From this identity and
\eqref{primavera}  one easily obtains that
\begin{equation}\label{estate}
\l_k ^{(n)} = \min _{S_k } \max _{  F \in S_k\,:\, F \not = 0 } \Phi
_n (F)\,,
\end{equation}
where  $S_k$ varies among all $k$--dimensional subsets of $\cB(n)$
(recall \eqref{milano}), while for a generic function $F \in C_0[0,
\ell_n]$
 we define
  \begin{equation}\label{ferragosto}
 \Phi_n (F):= \frac{
 \int _0 ^{\ell_n} D_s F (s)^2 ds }{ \int
_0^{\ell_n}  F(s)^2 dm _n (s) }
\end{equation}
whenever the denominator is nonzero. Here and in what follows, we
write $\int _0^{\ell_n}$ instead of $\int_{[0, \ell_n]}$.

\medskip
The following observation will reveal  very useful:
\begin{Le}\label{freccia}
Let $F \in \cB(n)$ and let  $G\in C_0[0,\ell_n]$ be any function
satisfying   $F(x_j^{(n)})= G( x^{(n)} _j)$ for all $0\leq j \leq
n$. Then
\begin{equation}\label{frecciarossa}
\int _0^{\ell_n} D_s F(s)^2 ds \leq \int _0^{\ell_n} D_s G(s)^2 ds
\,.
\end{equation}
In particular, if $F \not \equiv 0 $ then $\Phi_n (F)$ and $\Phi_n
(G)$ are both well defined and
$\Phi_n (F) \leq \Phi_n (G)$.
\end{Le}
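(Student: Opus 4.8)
The plan is to reduce everything to the single inequality \eqref{frecciarossa}, since the ``in particular'' part is then immediate: if $F\not\equiv 0$ then $\int_0^{\ell_n}F(s)^2\,dm_n(s)=\sum_{j=1}^n H_n(j/n)F(x_j^{(n)})^2=\sum_{j=1}^n H_n(j/n)G(x_j^{(n)})^2=\int_0^{\ell_n}G(s)^2\,dm_n(s)$, because $dm_n$ is supported on the points $x_j^{(n)}$ and $F,G$ agree there; this common value is strictly positive since $F\not\equiv 0$ on $\cB(n)$ forces $F(x_j^{(n)})\neq 0$ for some $j$ with $H_n(j/n)>0$. Hence both $\Phi_n(F)$ and $\Phi_n(G)$ are well defined, they have equal denominators, and \eqref{frecciarossa} gives $\Phi_n(F)\leq\Phi_n(G)$.

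So the work is \eqref{frecciarossa}. First I would split both integrals over the subintervals $[x_{j-1}^{(n)},x_j^{(n)}]$, $1\leq j\leq n$ (note $x_0^{(n)}=0$ and $x_n^{(n)}=\ell_n$ since $F,G\in C_0[0,\ell_n]$), writing $\int_0^{\ell_n}D_sF(s)^2\,ds=\sum_{j=1}^n\int_{x_{j-1}^{(n)}}^{x_j^{(n)}}D_sF(s)^2\,ds$ and similarly for $G$. On each such subinterval $F$ is \emph{affine} (this is exactly property (iii) defining $\cB(n)$), so $D_sF$ is the constant $\bigl(F(x_j^{(n)})-F(x_{j-1}^{(n)})\bigr)/(x_j^{(n)}-x_{j-1}^{(n)})$ there, and thus $\int_{x_{j-1}^{(n)}}^{x_j^{(n)}}D_sF(s)^2\,ds=\dfrac{\bigl(F(x_j^{(n)})-F(x_{j-1}^{(n)})\bigr)^2}{x_j^{(n)}-x_{j-1}^{(n)}}$. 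It therefore suffices to show, for each fixed $j$, that the affine function on $[a,b]:=[x_{j-1}^{(n)},x_j^{(n)}]$ with the same boundary values as $G$ minimizes the Dirichlet energy $\int_a^b D_sH(s)^2\,ds$ among all $H\in C[a,b]$ (more precisely, absolutely continuous $H$ with $D_sH\in L^2$) attaining those boundary values $G(a),G(b)$.

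This last fact is the standard Dirichlet-principle computation: writing the affine interpolant as $\ell(s)$ and $H=\ell+\eta$ with $\eta(a)=\eta(b)=0$, one has $\int_a^b D_sH^2\,ds=\int_a^b D_s\ell^2\,ds+2D_s\ell\int_a^b D_s\eta\,ds+\int_a^b D_s\eta^2\,ds$; the cross term vanishes since $D_s\ell$ is constant and $\int_a^b D_s\eta\,ds=\eta(b)-\eta(a)=0$, leaving $\int_a^b D_sH^2\,ds=\int_a^b D_s\ell^2\,ds+\int_a^b D_s\eta^2\,ds\geq\int_a^b D_s\ell^2\,ds$. Applying this with $H=G|_{[a,b]}$ and $\ell=F|_{[a,b]}$ on each subinterval and summing over $j$ gives \eqref{frecciarossa}.

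The main obstacle is really just a technical point: ensuring that $G\in C_0[0,\ell_n]$ has $D_sG\in L^2$ so that $\Phi_n(G)$ is meaningful and the integration-by-parts step $\int_a^b D_s\eta\,ds=\eta(b)-\eta(a)$ is legitimate. Since $\Phi_n(G)$ appearing in the conclusion is only asserted to be ``well defined'' under the hypotheses in force — and in the applications $G$ will always be a competitor of finite energy — I would simply note that if $\int_0^{\ell_n}D_sG(s)^2\,ds=\infty$ the inequality \eqref{frecciarossa} holds trivially, and otherwise $G$ is absolutely continuous with square-integrable Radon–Nikodym derivative on each $[x_{j-1}^{(n)},x_j^{(n)}]$ and the fundamental theorem of calculus applies to $\eta=G-F$ there. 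Everything else is the routine splitting and the one-line orthogonality argument above.
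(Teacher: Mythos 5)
Your proof is correct and follows essentially the same route as the paper: split the energy over the subintervals $[x_{j-1}^{(n)},x_j^{(n)}]$, use that $F$ is affine there so $\int D_sF^2\,ds=[F(x_j^{(n)})-F(x_{j-1}^{(n)})]^2/(x_j^{(n)}-x_{j-1}^{(n)})$, replace the boundary values of $F$ by those of $G$, and conclude that this is bounded by $\int D_sG^2\,ds$. The paper closes the last step with Cauchy--Schwarz applied to $G(x_j^{(n)})-G(x_{j-1}^{(n)})=\int D_sG\,ds$, whereas you use the orthogonal decomposition $G=F+\eta$; these are the same one-line Dirichlet-principle estimate.
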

\begin{proof}
In order to get \eqref{frecciarossa} it is enough to observe that by
Schwarz' inequality it holds
\begin{multline*}
\int _{x^{(n)} _{j-1} } ^{ x^{(n)}_j} D_s F (s) ^2 ds=\frac{
\bigl[F\bigl(x^{(n)}_{j}\bigr)- F\bigl( x^{(n)}_{j-1} \bigr)
\bigr]^2
}{ x^{(n)}_{j}- x^{(n)}_{j-1} } =\\ \frac{
\bigl[G\bigl(x^{(n)}_{j}\bigr)- G\bigl( x^{(n)}_{j-1} \bigr)
\bigr]^2
}{ x^{(n)}_{j}- x^{(n)}_{j-1} }
 =
  \frac{
  \bigl[ \int _{x_{j-1} ^{(n)} } ^{ x_j ^{(n)} } D_s G(s)ds
  \bigr]^2 }{
x^{(n)}_{j}- x^{(n)}_{j-1}
  } \leq  \int _{x^{(n)} _{j-1} } ^{
x^{(n)}_j} D_s G (s) ^2 ds\,.
\end{multline*}
From \eqref{frecciarossa} one derives the last issue by observing
that $ dm_n (F^2)= dm_n (G^2)$ ($dm_n(\cdot)$ denoting the average
w.r.t. $dm_n$).
\end{proof}

We have now all the tools in order to prove that the eigenvalues
$\l^{(n)}_k$ are bounded uniformly in $n$:
\begin{Le}\label{onda0}
For each $k\geq 1$, it holds
 $\sup _{n> k } \l_k ^{(n) } =: a(k)<\infty$.
\end{Le}
\begin{proof}
Given a function $f \in C_0[0, \ell_n]$ and $n \geq 1$, we define
$K_n f$ as the unique function in $\cB(n)$ such that $ f(x_j^{(n)})=
K_n f (x^{(n)}_j) $ for all $0 \leq j \leq n$. Note that $K_n$
commutes  with linear combinations: $K_n (a_1 f_1+ \cdots + a_k
f_k)= a_1 K_n f_1 +\cdots +a_k K_n f_k$.

Due to the assumption that $dm$ is not a linear combination of a
finite number of delta measures,   for some $\e>0$ we can divide the
interval
 $[0, \ell-\e )$ in $k$
subintervals $I_j = [a_j,b _j) $ such that  $dm ( \text{int}(I_j))>0
$, $\text{int}(I_j)=(a_j,b_j)$.

  Since $dm_n $ converges to $dm$ weakly, it must be $dm_n
(\text{int}(I_j) )>0$ for all $j: 1\leq j \leq k$, and for  $n$
large enough. For each $j$  we fix a piecewise--linear function
$f_j: \bbR \rightarrow \bbR$,
 with support in $I_j$ and strictly positive on $\text{int} (I_j)$.
 Since $ \ell_n \rightarrow \ell>\ell-\e$, taking $n$ large enough,
all  functions   $f_j$ are  zero outside $(0, \ell_n)$, hence we can
think of $f_j$ as function in $C_0[0, \ell_n]$. Having disjoint
supports, the functions $f_1$, $f_2$,..., $f_k$ are independent in
$C_0[0, \ell_n]$.

Trivially   $K_n f_1$, $K_n f_2$,..., $K_n f _k$ are independent
functions in $\cB(n)$ for $n$ large enough since   $dm_n
(\text{int}(I_j) )>0$ for all $j$  if $n$ is large enough. Due to
the above independence, we can apply the min--max principle
\eqref{estate}. Let us write $S_k$ for the real vector space spanned
by $K_n f_1, K_n f_2, \dots, K_n f_k$  and $\bar S_k$ for the real
vector space spanned by $f_1, f_2, \dots, f_k$. As already observed,
$S_k = K_n (\bar S_k)$.
 Using also   Lemma
\ref{freccia}, we conclude that  for $n$ large enough
$$ \l ^{(n)}_k \leq \max\{ \Phi_n (f) : f\in S_k\,,\,
 dm_n (f^2)> 0    \}\leq
 \max\{ \Phi_n
( f) : f \in  \bar S_k\,,\, dm_n(f^2)>0 \}\,. $$
 Take
$f =a_1 f_1 + a_2 f_2 + \cdots+ a_k f_k$ such that $ dm_n (f^2) >0$.
Since $\Phi_n(f)= \Phi_n (c f)$, without loss of generality we  can
assume that  $\sum_{i=1}^k a_i ^2=1 $. Since the functions $f_j$
have disjoint supports, it holds $(D_s f)^2= \sum _{j=1 }^k a_j^2
(D_s f_j)^2 $ a.e., while $ f^2 =\sum _{j=1 }^k a_j^2 f_j^2$. In
particular, we can write
\begin{equation}\label{mandarino} \Phi_n(f) = \frac{ \sum _{j=1} ^k a_j^2  \int
_0 ^{\ell_n} D_s f_j(s)^2 ds  }{ \sum _{j=1} ^k a_j ^2  \int _0
^{\ell_n}  f_j(s)^2 dm_n (s) }\,.
\end{equation}
Hence, for  $n $ large enough, it holds
\begin{equation}
\l^{(n)}_k \leq\frac{  \max \{\int _0 ^\ell D_s f_j(s)^2 ds : 1\leq
j \leq k\}}{ \min \{ \int _0 ^\ell f_j(s)^2 dm_n(s): 1 \leq j \leq k
\} }\,.
\end{equation}
The conclusion is now trivial.
\end{proof}

\subsection{Proof of Theorem \ref{speriamo}}\label{completobis}

Most of the work  necessary for the  convergence of the eigenvalues
has been done for proving Lemma \ref{nonnabruna} and Lemma
\ref{onda0}. Due to Lemma \ref{volare}, we know that the eigenvalues
of  $-\bbL_n$ and the eigenvalues  of the differential operator
$-D_m D_x$ with Dirichlet conditions outside   $(0,\ell) $  are
simple, positive and form a set without accumulation points. Since
$-\bbL_n$ is a symmetric operator on the $(n-1)$--dimensional space
$L^2 ((0,1) \cap \bbZ_n  , \mu_n)$, where $\mu_n$ has been
introduced in Section \ref{sec_minmax}, we conclude that $-\bbL_n$
has $n-1$ eigenvalues.

Given $k\geq 1$ we take  $ a(k) $ as in Lemma \ref{onda0} and we fix
$L\geq a(k)$ such that $L$ is not an eigenvalue of $-D_m D_x$ with
Dirichlet conditions.  Let $k_0$, $\e$ and $n_0$ be as in Lemma
\ref{nonnabruna}.
 Then for $n\geq n_0$ the following holds: in each interval
$J_i=[\l_i-\e, \l_i+\e]$ there is exactly one eigenvalue of
$-\bbL_n$ and in $[0,L)\setminus  \cup _{i=1}^{k_0} J_i$ there is no
eigenvalue of $-\bbL_n$. Since we know by Lemma \ref{onda0} that
$-\bbL_n$ has at least $k$ eigenvalues in $[0,L]$ it must be  $k
\leq  k_0$ and $\l^{(n)}_i \in J_i $ for all $i: 1\leq i \leq k$. In
particular, $ \limsup _{n\uparrow \infty} |\l^{(n)} _i - \l _i |
\leq \e $ for all $ i: 1\leq i \leq k$. Using the arbitrariness of
$\e$ and $k$ we conclude that the operator $-D_m D_x$ with Dirichlet
conditions outside $(0,\ell)$ has infinite eigenvalues satisfying
\eqref{europa}. Knowing that $\l^{(n)}_k \to \l_k$ as $n \to \infty$,
the convergence from the eigenfunction $\psi(\cdot, \l^{(n)}_k)$ to $\psi(\cdot, \l_k)$, as specified
in the theorem, follows from Lemma \ref{sanpietro}.


\section{Dirichlet--Neumann bracketing}\label{sec_dnb}

Let $m:\bbR\rightarrow [0,\infty)$ be a c\`{a}dl\`{a}g nondecreasing
function with $m(x) = 0$ for all $x<0$.  We recall that $E_m$
denotes the support of $dm$,
 i.e. the  set of
points where $m$ increases (see \eqref{emme}) and that
 $m_x$ denotes  the magnitude of the jump of the function $m$ at
 the point $x$, i.e. $m_x:= m(x+)-m(x-)=m(x)-m(x-)$.
 We  suppose that  $E_m \not = \emptyset$,    $0 = \inf
E_m$ and   $\ell_m := \sup E_m<\infty$. We want to compare the
eigenvalue counting function for the generalized operator $-D_x D_m$
with Dirichlet boundary conditions to the same function when taking
Neumann boundary conditions. In order  to apply the
Dirichlet--Neumann bracketing as stated in Section XIII.15 of
\cite{RS4} and as developed by M\'{e}tivier and Lapidus (cf.
\cite{Me} and \cite{L}),   we need to study generalized differential
operators as self--adjoint operators on suitable Hilbert spaces.

In the rest of the section we assume that
 $ m_0=m_{\ell_m}=0$.
 The reason will become clear soon. We consider the real Hilbert
space $\cH:=L^2 ( [0,\ell_m], dm)$ and denote its scalar product as
$(\cdot, \cdot)$. When writing $\int dm(y) g(y)$ we mean $\int_{[0,
\ell_m]} dm (y) g(y)$.

\subsection{The operator $-\cL_D$} We
 define the operator $-\cL_D: \cD( -\cL_D)\subset \cH \rightarrow
\cH$ as follows. First, we set that  $f \in \cD(-\cL_D)$ if there
exists a function $g \in \cH$ such that
\begin{equation}\label{aristogatti}
f(x)= bx - \int _0^x dy \int_{[0,y)} dm (z) g(z)\,, \qquad
b:=\frac{1}{\ell_m}  \int _0^{\ell_m}  dy \int_{[0,y)} dm (z)
g(z)\,.
\end{equation}
We note that the above identity implies that $f$ has a
representative given by a continuous function in $C[0, \ell_m]$ such
that $f(0)=f(\ell_m)=0$. Moreover, by the discussion following
\eqref{pioggia} (cf. \eqref{napoleone1} and \eqref{napoleone2}) and
the assumption  $m_0=m_{\ell_m}=0$, we derive from identity
\eqref{aristogatti} that the function $g \in \cH$ satisfying
\eqref{aristogatti} is unique. Hence, we define $-\cL_D f=g$. Always
due to \eqref{napoleone1} and \eqref{napoleone2}, we know that if $f
\in \cD (-\cL_D)$, then $f$ has right derivative $D_x^+ f$ on $[0,
\ell_m)$, $f$ has  left derivative $D_x^- f$ on $(0, \ell_m]$ and
has derivative $D_x f$ on $(0, \ell_m)$ apart a countable set of
points. In particular, $f $ has derivative Lebesgue a.e. on $(0,
\ell_m)$. The operator $-\cL_D$ is simply the operator $-D_xD_m$
with Dirichlet boundary conditions thought on the space $\cH$.

\begin{Pro}\label{formica} The following holds:
\begin{itemize}

\item[(i)]

the operator $-\cL_D: \cD(-\cL_D) \subset \cH \rightarrow \cH$ is
self--adjoint;

\item[(ii)]
 consider the symmetric compact operator
$\cK: \cH \rightarrow \cH$ defined as \begin{equation}\label{coppa}
\cK g(x) = \int K(x,y) g(y) dm (y)\,, \qquad g \in \cH \,,
\end{equation} where the function $K(x,y):=G_{0,\ell_m}(x,y)$ is given by
\eqref{barbamammaX}. Then, $Ran (\cK)= \cD ( -\cL_D)$ and $ -\cL_D
\circ \cK = \bbI$ on $\cH$. In particular, the operator $-\cL_D$
  admits a complete orthonormal set of eigenfunctions  and therefore  $-\cL_D$ has pure point spectrum.
Moreover,
  the above
  eigenvalues  and eigenfunctions coincide with the ones in Lemma
  \ref{volare}.


\end{itemize}
\end{Pro}
\begin{proof}
It is trivial to check that \eqref{aristogatti} can be rewritten as
\begin{equation}\label{camillo}
 f(x)= \int K(x,y) g(y) dm (y)\,.
 \end{equation}
Hence, by definition  $\cD (-\cL_D)= Ran ( \cK)$ and $\cL _D (\cK(g)
)= g$ for all $g \in \cH$ and $\cK$ is injective (see the discussion
on the well definition of $-\cL_D$). Since $K(x,y)=K(y,x)$, the
operator $\cK$ is symmetric. Since $K \in L^2 (dm \otimes dm ) $
($K$ is bounded and $dm$ has finite mass), by \cite{RS1}[Theorem
VI.23] $\cK$ is an Hilbert--Schmidt operator and therefore is
compact (cf. \cite{RS1}[Theorem VI.22]). In particular, $\cH$ has an
orthonormal basis $\{\psi_n\}$ such that $\cK \psi_n = \g_n \psi_n$
for suitable eigenvalues  $\g_n$ (cf. Theorems VI.16   in
\cite{RS1}). Since $\cK$ is injective, we conclude that
$\g_n\not=0$,  $\psi_n = \cK( (1/\g_n) \psi_n)  \in Ran (\cK)= \cD
(-\cL_D)$ and $ -\cL_D \psi_n= (1/\g_n) \psi_n$. It follows that
$\{\psi_n\}$ is an orthonormal basis of eigenvectors of $-\cL_D$. By
\eqref{aristogatti}, the function $\psi_n \in L^2(dm)$ must have a
representative in $C[0, \ell_m]$. Taking this representative, the
identity $\psi_n = -(1/\g_n) \cL_D \psi_n$ simply means that
$\psi_n$ is an eigenfunction with eigenvalue $1/\g_n$ of the
generalized differential operator $-D_x D_m$ with Dirichlet boundary
conditions as defined in Section \ref{preliminare}. Finally, since
$-\cL_D$ admits an orthonormal basis of eigenvectors, its spectrum
is pure point and is given by the family of eigenvalues. This
concludes the proof of point (ii).

\smallskip
In order to prove (i), we observe that $\cD (-\cL_D)$ contains the
finite linear combinations of the orthonormal basis $\{\psi _n\}$
and therefore it is a dense subspace in $\cH$. Given $f,\hat{f} \in
\cD(-\cL_D)$, let $g,\hat{g} \in \cH$ such that $f= \cK g$, $\hat{f}
= \cK \hat{g}$. Then, using the symmetry of $\cK$ and point (ii), we
obtain $ (-\cL_D f,\hat{f})= (g, \cK \hat{g})=(\cK g, \hat{g})= (f,
-\cL_D \hat{f})$.  This proves that $-\cL_D$ is symmetric. In order
to prove that it is self--adjoint we need to show that, given
$v,w\in \cH$ such that $ (-\cL_D f, v)= (f, w)$ for all $f \in
\cD(-\cL_D)$, it must be $v \in \cD(-\cL_D)$ and $ -\cL_D v=w$. To
this aim, we write $g = -\cL_D f$. Then, by the symmetry of $\cK$,
it holds $ (g,v)= (-\cL_D f,v)= (f,w)=(\cK g, w)= (g, \cK w)$.
 Since this holds for any $f \in \cD(-\cL_D)$ and therefore for any
$g \in \cH$, it must be $v= \cK w$. By point (ii), this is
equivalent to the fact that $w \in\cD (-\cL_D)$ and $w=-\cL_D v$.
This concludes the proof of (i).
\end{proof}

\subsection{The operator $-\cL_N$}  We
 define the operator $-\cL_N: \cD( -\cL_N)\subset \cH \rightarrow
\cH$ as follows. First, we say that  $f \in \cD(-\cL_N)$ if there
exist a function $g \in \cH$ and a constant $a \in \bbR$ such that
\begin{equation}\label{aristogattiN}
f(x)= a - \int _0^x dy \int_{[0,y)} dm (z) g(z) \end{equation} and
\begin{equation}\label{zeromean} \int_{[0,\ell_m)} dm (z) g(z)=0\,.
\end{equation}
We note that the above identity implies that $f$ has a
representative given by a continuous function in $C[0, \ell_m]$.
Moreover, by the discussion following \eqref{pioggia} (cf.
\eqref{napoleone1} and \eqref{napoleone2}) and the assumption
$m_0=m_{\ell_m}=0$, we derive from identity \eqref{aristogattiN}
that the function $g \in \cH$ satisfying \eqref{aristogattiN} is
unique. Hence, we define $-\cL_N f=g$. Always due to
\eqref{napoleone1} and \eqref{napoleone2}, we know that if $f \in
\cD (-\cL_N)$, then $f$ has right derivative $D_x^+ f$ on $[0,
\ell_m)$, $f$ has  left derivative $D_x^- f$ on $(0, \ell_m]$ and
has derivative $D_x f$ on $(0, \ell_m)$ apart a countable set of
points.  In addition,   $D_x^+ f (0)$ and $ D_x^- f(\ell_m)$ are
zero due to \eqref{aristogattiN} and \eqref{zeromean}.  The operator
$-\cL_D$ is simply the operator $-D_xD_m$ with Neumann boundary
conditions thought of on the space $\cH$.

\begin{Pro}\label{formicaN} The following holds:
\begin{itemize}

\item[(i)]

the operator $-\cL_N: \cD(-\cL_N) \subset \cH \rightarrow \cH$ is
self--adjoint;

\item[(ii)]  the operator $-\cL_N$
  admits a complete orthonormal set of eigenfunctions and therefore  $-\cL_N$ has only pure point
  spectrum. The eigenvalues and eigenfunctions are the same as the
  ones associated to the operator $-D_x D_m$ with Neumann boundary
  conditions as defined in Section \ref{preliminare}.
\end{itemize}
  \end{Pro}
  \begin{proof}
We start with point (i). First we prove that $-\cL_N$ is symmetric.
Take $f,g,a$ as in \eqref{aristogattiN} and \eqref{zeromean}, and
take $\hat f, \hat g, \hat a $ similarly. Then,
$$ (f, -\cL_N \hat{f}) = \int dm (x) f(x) \hat{g} (x)= a \int dm(dx)
\hat{g} (x) - \int dm(x) \hat{g} (x) \int _0^x dy \int_{[0,y)} dm(z)
g(z)\,.$$ Using that $\int dm(x) \hat{g} (x)=0$ by \eqref{zeromean},
we conclude that
 $$(f,- \cL_N  \hat{f})= \int dm (x) \int dm (z) \hat{g}(x)  g( z)
\bbI_{z\leq x} (z-x)\,.
$$
Since, by \eqref{zeromean} and its analogous version for $\hat{g}$,
it holds $\int dm (x) \int dm (z) g(x) \hat g( z) (z-x)=0$, we can
rewrite the above expression in the  symmetric form
\begin{equation}\label{rovinoso}
(f,- \cL_N \hat{f})=- \frac{1}{2} \int dm (x) \int dm (z) \hat{g}(x)
g( z) |x-z|\,,
\end{equation}
which immediately implies that $-\cL_N$ is symmetric.

\smallskip

Let us consider the Hilbert subspace $\cW = \{f \in \cH\,:\,
(1,f)=0\}$, namely $\cW$ is the family of functions in $  \cH$
having zero mean w.r.t. $dm$. Then we define the operator $ T: \cH
\to \cH$ as \begin{equation}\label{mou} Tg(x) = -\int _0^x dy \int
_{[0,y)} dm(z) g(z)= \int dm(z) g(z) (z-x) \bbI_{0\leq z \leq x}\,.
\end{equation}
Finally, we write $P: \cH \to \cW$ for the orthogonal projection of
$\cH$ onto $\cW$: $Pf= f-(1,f)/(1,1)$.  Note that $ [P\circ T]g (x)
= \int d m(z) g(z) H(x,z)$,
where
$$ H(x,z)= (z-x) \bbI _{0\leq z\leq
x }- \int _{(z, \ell_m)} dm(u) (z-u)\Big/ \int dm (u)   $$
Since  $H \in L^2 (dm \otimes dm )$,  due to \cite{RS1}[Theorem
VI.23] $P\circ T$ is an Hilbert--Schmidt operator on $\cH$, and
therefore a compact operator. In particular, the operator $W: \cW\to
\cW$ defined as the restriction of $P\circ T$ to $W$ is again a
compact operator. We claim that $W$ is symmetric. Indeed, setting
$f= W g$ and $f'= W  g'$, due to the first identity in \eqref{mou}
we get that $f,f' \in \cD(-\cL_N)$ and $ -\cL_N f =g$, $-\cL_N
f'=g'$. Then, using that $\cL_N$ is symmetric as proven above, we
conclude
$$ (W g, g')=(f, -\cL_N f')= ( -\cL_N f, f')= (g, W g') \,.
$$
Having proved that $W$ is a symmetric compact operator, from
\cite{RS1}[Theorem VI.16] we derive that $\cW$ has an orthonormal
basis $\{\psi_n\}_n$ of eigenvectors of $W$, i.e. $W \psi_n = \g_n
\psi_n$ for suitable numbers $\g_n$. Since  $W$ is injective (recall
the discussion on the well definition of $-\cL_N$), it must be $\g_n
\not =0$. From the identity $ W \psi_n = \g_n \psi _n$ we conclude
that
$$ \psi _n (x) = a_n - \frac{1}{\g_n}\int _0^x dy \int _{[0,y)} dm(z)
\psi_n(z)$$ for some constant $a_n \in \bbR$. The above identity
implies that $ \psi_n \in \cD(-\cL_N)$ and $-\cL_N \psi_n = (1/\g_n)
\psi_n$. On the other hand $1\in \cD(-\cL_N)$ and $-\cL_N 1 =0$.
Since $ \cH = \{ c : c \in \bbR\} \oplus \cW$, we obtain that $\cH$
admits an orthonormal basis of eigenvectors of  $-\cL_N$. This also
implies that $-\cL_N$ has only pure point spectrum. Trivially, all
eigenvectors (as all elements in $\cD(-\cL_N)$) are continuous and
are eigenvectors of $-D_x D_m$ with Neumann b.c. in the sense of
Section \ref{preliminare}. This concludes the proof of (i) and
(ii).\qedhere

\end{proof}

\subsection{The quadratic forms $q_D$ and $q_N$}

Consider now the symmetric form $q_N$ on $\cH$ with domain $Q(q_N)$
given by the elements $f \in \cH$ having a representative $f$ which
satisfies
\begin{itemize}
 \item[(A1)] $f$  is absolutely
continuous on $[0, \ell_m]$,

\item[(A2)] $\int _0^{\ell_m} D _x f (x) ^2 dx< \infty$,

\item[(A3)]
  $ D_x f$  is constant on each connected component of $(0,\ell_m) \setminus
  \text{supp}(dm)$, $\text{supp}(dm)$ being the support of the measure
  $dm$.
\end{itemize}
and  such that $q_N(f,\hat f)= \int _0^{\ell_m} D _x f (x) D_x
\hat{f} (x) dx $ for all $f, \hat f \in Q(q_N)$. In addition, we set
$q_N(f):=q_N(f,f)$.  We point out that one cannot apply directly the
theory discussed in \cite{FOT}[Example 1.2.2] since the fundamental
condition (1.1.7) there can be violated in our setting. Some care is
necessary. First of all we need to prove that $q_N$ is well defined:
\begin{Le}\label{yomo} The representative $f$ satisfying the above
properties (A1),(A2),(A3) is unique.
In  particular the form $q_N$ is well defined.
\end{Le}
\begin{proof}
Take two functions  $f,\hat f$ on $[0,\ell_m]$ satisfying the above
properties (A1),(A2),(A3) and such that $f= \hat f $ $dm$--a.e.
 We denote by $\cC$ the support of
$dm$. We first show that $f= \hat f$ on $\cC$. Suppose that $x \in
\cC$. Then for each $\e>0$ the set $I_\e:=(x-\e,x+\e)\cap [0,
\ell_m]$ has positive $dm$--measure and therefore there exists
$x_\e\in I_\e$ such that $f(x_\e)=\hat f (x_\e)$ (otherwise $f$ and
$\hat f$ would differ on a set having positive $dm$--measure). By
taking the limit $ \e \downarrow 0$ and  by continuity (property
(A1))
 we conclude that $f(x)= \hat
f(x)$ as claimed. Consider now the  open set $[0,\ell_m] \setminus
\cC$ and take  one of its connected components $(a,b)$ (recall that
$0,\ell_m\in \cC$). By property (A3) it must be $f(x)-\hat
f(x)=c_0x+c_1$ on $(a,b)$ for a suitable constants $c_0,c_1$. Since
$a,b \in \cC$ and $f=\hat f $ on $\cC$ we get that $c_0=c_1=0$, thus
proving that $f=\hat f$ on $(a,b)$. This allows to conclude.
\end{proof}
Below, when handling  with $f \in Q(q_N)$ it will be understood that
we refer to the  representative satisfying the above properties
(A1),(A2),(A3).

\begin{Le}\label{mitrino}
The form  $q_N$ is closed. Equivalently, the space $Q(q_N)$ endowed
of the scalar product
$$ (f,g)_1=  q_N (f , g )+ (f,g)\,, \qquad f ,g\in
Q(q_N)
$$
is an Hilbert space. \end{Le} \begin{proof} Take a $\|\cdot
\|_1$--Cauchy sequence $(f_n)_{n\geq 0}$ in $Q(q_N)$.
Since $D_x f_n$ is Cauchy in $L^2(dx)$, it converges to some
function $u \in L^2(dx)$. Therefore, due top Schwarz inequality,
\begin{equation}\label{confusa}  f_{n}(x) -f_{n}(0) =\int _0^x D_x f_n (z) dz \to
\int_0^x u(z) dz:= g(x) \end{equation} uniformly in $x \in
[0,\ell_m]$. Since $(f_n)_{n\geq 0}$ is a Cauchy  sequence in $\cH$,
we have that $f_n$ converges to some $f$ in
 $\cH$. Having $f_{n}-f_{n}(0)\to g$ uniformly and therefore in $\cH$, and $f_n \to f$ in
$\cH$, it must be $f_{n}(0) \to f-g$ in $\cH$. In particular, the
sequence of numbers $f_{n}(0) $ converges  to $ \int (f-g) dm/ \int
dm$. This result implies that $f_n$ converges uniformly to the
absolutely  continuous function $h:=g + \int (f-g) dm/ \int dm $ on
$[0,\ell_m]$ such that $D_x h=u$. In particular, $h$ must be linear
on the connected components of $[0,\ell_m] \setminus
\text{supp}(dm)$. Hence $h \in Q(q_N)$ and, due
 to the previous considerations,  $f_n $ converges to $h$ w.r.t. the
 norm $\|\cdot\|_1$.
\end{proof}

Finally, we define another symmetric form $q_D$ on $\cH$ with domain
\begin{equation}\label{codabiz}
Q(q_D):=\{ f \in Q(q_N)\,:\, f(0)=f(\ell_m)=0 \}
 \end{equation}
setting  $q_D(f,\hat f):=q_N(f,\hat f)= \int _0^{\ell_m} D _x f (x)
D_x \hat{f}
(x) dx $. 

To each closed symmetric form on $\cH$ one associates in a canonical
way a nonnegative definite self--adjoint operator on $\cH$ (see
\cite{FOT}[Theorem 1.3.1],\cite{RS1}[Chapter VIII].
\begin{Le}\label{santostefano} The following holds:
\begin{itemize}

\item[(i)]
 The forms $q_N, q_D$ are the canonical closed symmetric forms
associated to $-\cL_N,-\cL_D$, respectively.

\item[(ii)]
$Q(q_D)$ is a closed subspace of the Hilbert space $\bigl( Q(q_N),
(\cdot, \cdot)_1\bigr)$ with  codimension $2$.

\item[(iii)] The inclusion map
$$ \iota: \bigl( Q(q_N), \|\cdot\|_1\bigr)\ni f \to f \in \bigl(
\cH, \|\cdot \|\bigr)$$ is a continuous compact operator.

\end{itemize}
\end{Le}
\begin{proof}
{\sl Item (i)}. We first focus on $q_N,-\cL_N$. We already know that
$q_N$ is a closed symmetric form. Trivially, $\cD(-\cL_N)$ is
included in $Q(q_N)$. We claim that
\begin{equation}\label{festina}
(-\cL _N f, v)= \int_0^{\ell_m} D_x f (x) D_x v (x) dx \,, \qquad
\forall f \in \cD(-\cL_N)\,, \; v \in Q(q_N)\,.
\end{equation}
By Proposition \ref{formicaN} the operator $-\cL_N$ is
self--adjoint, while by the above claim it is also symmetric and
nonnegative definite. Moreover, our claim \eqref{festina} together
with \cite{FOT}[Corollary 1.3.1] implies that $q_N$ is canonically
associated to $-\cL_N$.

To prove \eqref{festina} assume \eqref{aristogattiN} and
\eqref{zeromean} with $g \in \cH$. Then $D_x f(x)=-\int_{[0,x)}
dm(z) g(z)$ and
\begin{multline*}
 \int_0^{\ell_m} D_x f
(x) D_x v(x) dx  = - \int _0^{\ell_m} dx D_x v(x) \int_{[0,x)} dm(z)
g(z) \\=- \int_{[0,\ell_m)} dm(z) g(z) \int _{(z,\ell_m]} dx D_x
v(x) = \int_{[0,\ell_m)} dm(z) g(z)\bigl( v(z)- v (\ell_m)\bigr)\\=
(g,v)= (-\cL_N f,v)\,.
\end{multline*}
Note that in the forth identity we used \eqref{zeromean}.

\smallskip
Let us now prove the correspondence between $q_D$ and $-\cL_D$.
First we  show that $q_D$ is closed. To this aim, take $f_n \in
Q(q_D)$ such that $f_n$ is $\|\cdot\|_1$--Cauchy. Since $q_N$ is
closed, we know that there exists $f\in Q(q_N)$ with $\|f-f_n\|_1
\to 0$ as $n \to\infty$. Reasoning as in Lemma \ref{mitrino}, we
deduce that  $f_n$ converges to $f$ in the uniform norm, thus
implying that $f(0)=f(\ell_m)=0$. This proves the closeness of
$q_D$.

Knowing that $q_D$ is a closed symmetric form and reasoning as for
$q_N,-\cL_N$, to conclude
 we
only need to show that
\begin{equation}\label{festinabis}
(-\cL _D f, v)= \int_0^{\ell_m} D_x f (x) D_x v (x) dx \,, \qquad
\forall f \in \cD(-\cL_D)\,, \; v \in Q(q_D)\,.
\end{equation}
To this aim we assume \eqref{aristogatti} for some $g \in \cH$. Then
\begin{multline*}
 \int_0^{\ell_m} D_x f
(x) D_x v(x) dx  =  -\int _0^{\ell_m} dx D_x v(x) \int_{[0,x)} dm(z)
g(z) \\=- \int_{[0,\ell_m)} dm(z) g(z) \int _{(z,\ell_m]} dx D_x
v(x) = (g,v)= (-\cL_D f,v)\,.
\end{multline*}
Note that in the first identity and in the third one we used that
$v(0)=v(\ell_m)=0$.

\smallskip
{\sl Item (ii).} The thesis  follows from item (i), the definition
of $Q(q_N)$ and $Q(q_D)$.

\smallskip

{\sl Item (iii).} Since $\|f\|\leq \|f\|_1$ for each $f \in Q(q_N)$,
the inclusion map $\iota$ is trivially continuous. In order to prove
compactness, we need to show that each sequence $f_n \in Q(q_N)$
with $\|f_n\|_1 \leq 1$ admits a subsequence $f_{n_k}$ which
converges in $\cH$. Since $\|f_n\|_1 \leq1$ it holds  $ |f_n(x)-
f_n(y)| \leq \sqrt{y-x}$ for all $x,y \in [0, \ell_m]$. Applying
Ascoli--Arzel\`{a} Theorem, we then conclude that $f_n$ admits a
subsequence $f_{n_k}$ which converges in  the space $C([0, \ell_m])$
endowed of  the uniform norm. Trivially, this implies the
convergence in $\cH$.
\end{proof}

As a consequence of the above result we get that
\begin{equation}\label{sanvalentino}
  0 \leq -\cL_N\leq -\cL_D
  \end{equation}
   according to the definition on
\cite{RS4}[page 269].  For the reader's convenience and  for later
use, we recall the definition given in \cite{RS4}[page 269]:  given
nonnegative self--adjoint operators $A,B$, where $A$ is defined on a
dense subset of a Hilbert space $\cH'$ and $B$ is defined on a dense
subset of a Hilbert subspace $\cH'_1 \subset \cH'$, one says that $0
\leq A \leq B$ if (i) $Q(q_A) \supset Q(q_B)$,  and (ii) $0 \leq
q_A(\psi) \leq q_B(\psi)$ for all $ \psi \in Q(q_B)$, where $Q(q_A)$
and $Q(q_B)$ denote the domains of the quadratic forms $q_A$ and
$q_B$ associated to the operators $A$ and $B$, respectively.

\medskip

Considering the space $Q(q_N)$ endowed of the scalar product
$(\cdot, \cdot)_1$, the above Lemma \ref{santostefano} implies that
$\bigl( Q(q_N), \cH, q_N (\cdot, \cdot) \bigr)$ is a variational
triple (cf. \cite{Me}[Section II-2]). Indeed, the following holds:
(i) $Q(q_N)$ and $\cH$ are Hilbert spaces, (ii) the inclusion map
gives a continuous injection of $Q(q_N)$ into $\cH$, (iii) $q_N
(\cdot, \cdot)$ is a continuous scalar product on $Q(q_N)$ since
$|q_N(f,g)| \leq \|f\|_1\|g\|_1$ for all $f,g \in Q(q_N)$, (iv) the
scalar product $q_N(\cdot, \cdot)$ is coercive with respect to
$\cH$: $ \|f\|_1^2 - \|f\|^2 \leq q_N(f,f)$ for all  $ f \in
Q(q_N)$.

We denote by $\cN^{[0,\ell_m]} _{D,m}(x)$ the number of eigenvalues
of $-\cL_D$ not larger than $x$. Similarly we define
$\cN^{[0,\ell_m]} _{N,m}(x)$.
 By Lemma \ref{santostefano} the inclusion map $\iota:
Q(q_N)\hookrightarrow \cH$ is compact and $Q(q_D)$ is a closed
subspace in $Q(q_N)$. Applying Proposition 2.9 in \cite{Me} we get
the equality $\cN_{m,N}^{[0, \ell_m]}(x)= N(x; Q(q_N), \cH, q_N) $
and $\cN_{m,N}^{[0, \ell_m]}(x)= N(x; Q(q_D), \cH, q_D) $, where the
functions $N(x; Q(q_N), \cH, q_N) $ and $ N(x; Q(q_D), \cH, q_D)$
are defined in \cite{Me}[Page 131].
  As byproduct of Lemma \ref{santostefano}, Proposition 2.7 in \cite{Me} and the arguments used in Corollary
4.7 in \cite{KL}, we obtain that
\begin{equation}\label{agognatameta}
 \cN^{[0,\ell_m]} _{D,m}(x) \leq
\cN^{[0,\ell_m]} _{N,m}(x) \leq \cN^{[0,\ell_m]} _{D,m}(x) +2\,,
\qquad \forall x \geq 0\,.
\end{equation}
We point out  that the first inequality follows also from
\eqref{sanvalentino} and the lemma preceding Proposition 4 in
\cite{RS4}[Section XIII.15].

\medskip

\

Up  to now we have defined $-\cL_D$ and $-\cL_N$ referring to the
interval $(0, \ell_m )$, where $0 = \inf E_m$, $\ell_m = \sup E_m$,
$m_0=0$ and $m_{\ell_m}=0$. In general, given an open interval
$I=(u,v) \subset (0, \ell_m)$, such that
\begin{equation}\label{ospedale} m_u=m_v=0, \qquad
dm\bigl((u,u+\e)\bigr)>0\text{ and } dm\bigl((v-\e,v) \bigr)>0 \;
\forall \e>0\,, \end{equation}
 we define $-\cL_D^I, -\cL_N ^I$ as the operators $-\cL_D$
and $-\cL_N$ but with the measure $dm$ replaced by its restriction
to $I$. For simplicity, we write $L^2(I,dm)$ for the space $L^2(I,
\wt{dm})$ where $\wt{dm}$ denotes the restriction of $dm$ to the
interval $I$. Then, $f \in \cD(-\cL^I_D)\subset L^2( I ,dm)$ if and
only if there exists $g\in L^2 (I,dm)$ such that, writing $I=(u,v)$,
$$ f(x)=b(x-u) - \int _u^x dy \int _{[u,y)} dm(z) g(z) \,, \qquad
\forall x \in I\,, $$ where
 $b=(v-u)^{-1}  \int _u^v dy \int _{[u,y)} dm(z) g(z) $.
The above $g\in L^2(I, dm)$ is unique and one sets $-\cL_D^I f=g$.
The definition is similar for $-\cL_N^I$. Propositions \ref{formica}
and \ref{formicaN} extend trivially to $-\cL_D^I$ and $-\cL_N^I$.
  We write
$q_D^I, q_N^I$ for the corresponding quadratic forms. Finally, for
$x \geq 0$ we define
\begin{align}
& \cN ^I_{m,D} (x):= \sharp \{ \l \in \bbR:\l \leq x ,\;  \l \text{
is eigenvalue of } -\cL^I_D \}\,,\label{annibale1}
\\
& \cN^I_{m,N}(x) := \sharp \{ \l \in \bbR:\l \leq x ,\;  \l \text{
is eigenvalue of } -\cL^I_N \}\,.\label{annibale2}
\end{align}

\begin{Le}\label{asdrubale}
Let $I_1=(a_1, b_1)$,...,$I_k=(a_k, b_k)$ be  a finite family of
disjoint open intervals, where $a_1<b_1\leq a_2<b_2 \leq a_3
<\cdots\leq a_k< b_k$ and
\begin{align*}
& m_{a_r}=0\,,\; \; m_{b_r}=0\;\; \forall r=1, \dots, k\,,\\
& dm\bigl((a_r, a_r+\e)\bigr)>0\,,\;\; dm\bigl( (b_r-\e, b_r)
\bigr)>0 \;\; \forall \e>0, \forall r=1, \dots k\,.
\end{align*}
Then
 for any $x \geq 0$  it holds
$ \cN ^{(a_1, b_k)} _{m,D} (x) \geq  \sum_{r=1}^k \cN ^{(a_r,
 b_r)}_{m,D}
(x)$.
If in addition  the intervals $I_r$ are neighboring, i.e.   $b_r=
a_{r+1}$ for all $r=1,\dots, k-1$, then for any $x \geq 0$ it holds
$\cN ^{(a_1, b_k)} _{m,N} (x) \leq \sum_{r=1}^k \cN ^{(a_r,
b_r)}_{m,N} (x)$.
\end{Le}
The above result is the analogous to Point c) in Proposition 4 in
\cite{RS4}[Section XIII.15].
\begin{proof}
We begin with the superadditivity (w.r.t. unions of intervals) of
$\cN ^{(\cdot )} _{m,D} (x)$. We consider the direct sum $ \oplus
_{r=1}^k L^2 (I_r, dm)$. We define $A= \oplus _{r=1}^k (-\cL_D^{I_r}
)$ as the operator with
 domain $$ \cD(A)= \oplus_{r=1}^k \cD\bigl(
 -\cL_D ^{I_r}\bigr) \subset  \oplus _{r=1}^k L^2 (I_r, dm) $$ such that
 $ A \bigl[( f_r )_{r=1}^k\bigr]=  \bigl( -\cL_D ^{I_r}
 f_r\bigr)_{r=1}^k$. Due to the properties listed in \cite{RS4}[page 268] and due to
Proposition \ref{formica}, the operator
 $A$ is a nonnegative self--adjoint operator.

\smallskip

 Trivially, the map
 $
 \psi : \oplus _{r=1}^k L^2 (I_r, dm)\rightarrow L^2 ([a_1,b_k]
,dm )$
 where
$$
 \psi \bigl[ (f_r)_{r=1}^k\bigr] (x)= \begin{cases} f_r (x) & \text{
if } x \in I_r \text{ for some } r\,,\\
0 & \text{ otherwise}\,,
\end{cases}
$$
is  injective and conserves the norm. In particular, the image of
$\psi$ is a closed (and therefore Hilbert) subspace of $L^2([a_1,
b_k ],dm)$. Consider, the operator
$$A': \psi( \cD(A) ) \subset \psi \left[\oplus _{r=1}^k L^2 (I_r, dm
)\right] \rightarrow \psi\left[ \oplus _{r=1}^k L^2 (I_r, dm
)\right]\,,$$ defined as $A' ( \psi (f))= \psi ( A f)$ for all $f
\in \cD (A)$. Then, $A'$ is a nonnegative self--adjoint operator.
Due to property (3) on page 268  of \cite{RS4}  and the
characterization of the  form domain  $Q(q_D)$, we get that $-\cL_D
^{(a_1, b_k)} \leq A'$.  The   superadditivity then follows from the
lemma stated in \cite{RS4}[page 270] and property (5) on page 268 of
\cite{RS4}.
\medskip

In order to prove subadditivity of $\cN ^{(\cdot)} _{m,N} (x)$ under
the hypothesis $b_r= a_{r+1}$ for all $r=1,\dots, k-1$, we first
observe that the above map $\psi$   is indeed an isomorphism of
Hilbert spaces (recall that $m_{a_r}=0$ and $m_{b_r}=0$). From the
definition of $q_N$ and Lemma \ref{santostefano} it is trivial to
check that
$$ 0 \leq \oplus _{r=1}^k \bigl( -\cL_N^{(a_1, b_k)} \bigr) \leq \psi^{-1} \circ\bigl(- \cL^{(a_1,b_k)}_N \bigr)\circ \psi\,,$$
where the operator on the right is simply the self--adjoint operator
on $\oplus _{r=1}^k L^2(I_r,dm)$ with domain $\bigl\{ \psi^{-1}(f) :
f \in \cD\bigl(-\cL_N^{(a_1, b_k)} \bigr)\bigr\}$, mapping
$\psi^{-1}(f)$ into $\psi^{-1} \bigl(-\cL_N^{(a_1, b_k)} f\bigr)$.
At this point, the subadditivity follows from  the Lemma on page 270
of \cite{RS4} and property (5) on page 268 of \cite{RS4}.
\end{proof}

\subsection{Conclusion} We can now conclude stating the
Dirichlet--Neumann bracketing in our context:

\begin{Th}\label{DNB} (Dirichlet--Neumann bracketing). Let
$I=[a,b]$, let
  $$a=a_0<a_1< \cdots < a_{n-1}<  a_n=b$$ be a partition of the interval
  $I$ and set $I_r:=[a_r,a_{r+1}]$ for $r=0, \dots, n-1$.
Suppose that  $m:I\rightarrow \bbR$ is a nondecreasing function such
that
\begin{itemize}

\item[(i)]
 $m_{a_r}=0$ for all $r=0, \dots, n$,
\item[(ii)] $dm([a_r, a_r+\e] )
>0$ for all $r=0,\dots, n-1$ and $\e>0$,
\item[(iii)] $dm([a_r-\e, a_r])>0$ for all $r=1, \dots, n$ and  $\e>0$.
\end{itemize}
Then, for all $x \geq 0$ it holds
\begin{align}
& \cN^I _{m,D}(x) \leq \cN^I _{m, N} (x) \leq \cN^I_{m,D} (x)+2\,,\label{treno10}\\
& \cN ^I_{m,D} (x)\geq \sum _{i=0}^{n-1} \cN^{I_i} _{m,D}(x)\,\label{treno20}\\
& \cN ^I _{m,N}(x) \leq \sum _{i=0}^{n-1} \cN^{I_i}
_{m,N}(x)\,.\label{treno30}
\end{align}
\end{Th}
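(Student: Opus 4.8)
The plan is to deduce all three displays from facts already established, so that the proof amounts to transporting the statements proved for the reference interval $[0,\ell_m]$ and for the special interval configurations of Lemma~\ref{asdrubale} into the present general setting. The one preliminary observation to record is that the counting functions do not change if one replaces the closed intervals $I_r=[a_r,a_{r+1}]$ (and $I=[a,b]$) by the corresponding open intervals: as noted after Lemma~\ref{asdrubale} the operators $-\cL_D^J$, $-\cL_N^J$ depend only on the restriction of $dm$ to $J$, and hypothesis (i) forces $dm$ to charge none of the points $a_0,\dots,a_n$. So below I switch freely between closed and open intervals.

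For \eqref{treno10} I would apply the chain of results leading to \eqref{agognatameta} to the interval $[a,b]$ in place of $[0,\ell_m]$. Concretely, $\cN^{[a,b]}_{m,D}$ and $\cN^{[a,b]}_{m,N}$ are by definition the counting functions of $-\cL_D^{(a,b)}$ and $-\cL_N^{(a,b)}$, obtained from $-\cL_D$ and $-\cL_N$ by restricting $dm$ to $[a,b]$ (equivalently, replacing $m$ by $\bar m(x)=m(a)\bbI(x\le a)+m(x)\bbI(a\le x\le b)+m(b)\bbI(x\ge b)$ and translating $a$ to the origin). Hypothesis (i) at $r=0,n$ gives $\bar m_0=\bar m_{\ell_{\bar m}}=0$, and hypotheses (ii) at $r=0$ and (iii) at $r=n$ give $0=\inf E_{\bar m}$, $\ell_{\bar m}=\sup E_{\bar m}$; hence $\bar m$ meets the standing assumptions of the subsections on $-\cL_D$, $-\cL_N$, and Propositions~\ref{formica}--\ref{formicaN}, Lemmas~\ref{sanvalentino}--\ref{partetreno} together with the variational-triple argument and \eqref{agognatameta} go through verbatim. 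This is \eqref{treno10}.

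For \eqref{treno20} I would invoke \eqref{farfalla1} of Lemma~\ref{asdrubale} with the disjoint open intervals $(a_0,a_1),\dots,(a_{n-1},a_n)$; its hypotheses hold because their endpoints lie in $\{a_0,\dots,a_n\}$ and hence carry no jump of $m$ by (i), while for small $\e>0$ one has $dm((a_r,a_r+\e))>0$ by (ii) together with $m_{a_r}=0$, and $dm((a_{r+1}-\e,a_{r+1}))>0$ by (iii) together with $m_{a_{r+1}}=0$. Reverting to closed intervals gives \eqref{treno20}. For \eqref{treno30} the same intervals are moreover neighboring (each $a_{r+1}$ is at once the right endpoint of $(a_r,a_{r+1})$ and the left endpoint of $(a_{r+1},a_{r+2})$), so \eqref{farfalla2} of Lemma~\ref{asdrubale} applies and yields \eqref{treno30} after the same open/closed passage.

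I do not expect a genuine obstacle here: the hard analysis — self-adjointness and quadratic-form identities for $-\cL_D$, $-\cL_N$, the ordering $0\le-\cL_N\le-\cL_D$, the variational triple, and the disjoint/neighboring monotonicity — is already done in the preceding subsections. The only points needing care are the routine verification that restricting $dm$ to $[a,b]$ preserves the endpoint hypotheses ($m_a=m_b=0$, $dm$ positive just to the right of $a$ and just to the left of $b$), and the consistent use of ``$dm$ gives no mass to the $a_r$'' to pass between open and closed subintervals.
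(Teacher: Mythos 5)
Your proposal is correct and follows exactly the route the paper takes: \eqref{treno10} is obtained from \eqref{agognatameta} (the paper also notes the first inequality follows from \eqref{passetto}), and \eqref{treno20}, \eqref{treno30} are read off from \eqref{farfalla1} and \eqref{farfalla2} of Lemma \ref{asdrubale}. Your explicit verification that hypotheses (i)--(iii) transfer the endpoint conditions to $[a,b]$ and to the subintervals, and the open/closed interval bookkeeping, are details the paper leaves implicit but are exactly the right things to check.
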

\begin{proof}
The bounds in \eqref{treno10} have been  obtained in
\eqref{agognatameta}. The inequalities \eqref{treno20} and
\eqref{treno30} follow from Lemma \ref{asdrubale}.
\end{proof}
As immediate consequence of \eqref{treno10} and \eqref{treno30} we
get a bound which will reveal very useful to derive
\eqref{kikokoala0} and \eqref{kikokoala00}:
\begin{Cor}\label{mela}
In the same setting of Theorem \ref{DNB} it holds $\cN^I _{m,D}(x)
\leq 2n +\sum _{i=0}^{n-1} \cN^{I_i} _{m,D}(x)$.
\end{Cor}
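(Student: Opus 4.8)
The statement is a direct combination of the three bracketing inequalities established in Theorem \ref{DNB}, applied both to the full interval $I$ and to each subinterval $I_i$. The plan is to chain them as follows. First I would invoke the left inequality in \eqref{treno10} for the interval $I$ itself, giving $\cN^I_{m,D}(x) \leq \cN^I_{m,N}(x)$ for every $x \geq 0$. Next I would apply \eqref{treno30}, which bounds the Neumann counting function on $I$ by the sum of the Neumann counting functions on the pieces: $\cN^I_{m,N}(x) \leq \sum_{i=0}^{n-1} \cN^{I_i}_{m,N}(x)$.

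The only point requiring a small remark is that the right inequality in \eqref{treno10} can be applied separately on each subinterval $I_i = [a_i, a_{i+1}]$. This is legitimate because the hypotheses (i)--(iii) of Theorem \ref{DNB} are inherited by $I_i$ with the trivial one-block partition: indeed $m_{a_i} = m_{a_{i+1}} = 0$ by (i), while $dm([a_i, a_i+\e])>0$ and $dm([a_{i+1}-\e, a_{i+1}])>0$ for all $\e>0$ by (ii) and (iii). Hence \eqref{treno10} yields $\cN^{I_i}_{m,N}(x) \leq \cN^{I_i}_{m,D}(x) + 2$ for each $i = 0, \dots, n-1$.

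Summing this last bound over $i$ and inserting it into the previous chain gives
\[
\cN^I_{m,D}(x) \;\leq\; \cN^I_{m,N}(x) \;\leq\; \sum_{i=0}^{n-1} \cN^{I_i}_{m,N}(x) \;\leq\; \sum_{i=0}^{n-1}\bigl(\cN^{I_i}_{m,D}(x) + 2\bigr) \;=\; 2n + \sum_{i=0}^{n-1} \cN^{I_i}_{m,D}(x),
\]
which is the claimed inequality. There is no real obstacle here: the argument is purely a bookkeeping consequence of Theorem \ref{DNB}, and the only thing worth stating explicitly is the observation above that the hypotheses restrict correctly to the subintervals $I_i$, so that the ``$+2$'' Neumann--Dirichlet comparison may be used on each block.
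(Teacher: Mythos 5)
Your proof is correct and coincides with the paper's own (implicit) argument: the corollary is stated there as an "immediate consequence of \eqref{treno10} and \eqref{treno30}", obtained by exactly the chain $\cN^I_{m,D}\leq\cN^I_{m,N}\leq\sum_i\cN^{I_i}_{m,N}\leq\sum_i(\cN^{I_i}_{m,D}+2)$. Your explicit check that hypotheses (i)--(iii) restrict to each $I_i$, so that the $+2$ comparison applies blockwise, is a welcome (if routine) addition.
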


\section{Proof of Theorem  \ref{tacchino} }\label{persico}

  We first consider   how the eigenvalue counting functions change under
affine transformations:
\begin{Le}\label{similare} Let $m :\bbR\to \bbR$ be a nondecreasing c\`{a}dl\`{a}g
function.  Given the interval $I=[a,b]$,   suppose that $m_a=m_b=0$
and $dm \bigl( (a,a+\e)\bigr)>0$, $dm \bigl( (b-\e, b)\bigr)>0$ for
all $\e>0$. Given $\g,\b
>0$, set $J=[\g a, \g b]$ and define the  function $M: \bbR \rightarrow \bbR$ as
$M(x)= \g^{1/\beta } m(x/\g)$.  Then
\begin{equation}\label{polizia}
\cN ^I_{m,D/N} (x)= \cN ^J _{M, D/N} (x/ \g^{1+1/\beta } ) \,.
\end{equation}
\end{Le}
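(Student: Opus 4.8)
The plan is to produce an explicit bijection, induced by the dilation $\phi(x)=\g x$, between the eigenfunctions of the operator on $I=[a,b]$ with a given eigenvalue $\lambda$ and the eigenfunctions of the operator on $J=[\g a,\g b]$ with eigenvalue $\lambda\,\g^{-(1+1/\b)}$, for both the Dirichlet and the Neumann boundary conditions, and then to count eigenvalues. First I would check that $M$ and $J$ satisfy the same structural hypotheses as $m$ and $I$: the jump of $M$ at $\g a$ is $\g^{1/\b}m_a=0$ (and likewise at $\g b$), while $dM\bigl((\g a,\g a+\e)\bigr)=\g^{1/\b}\,dm\bigl((a,a+\e/\g)\bigr)>0$ and similarly near $\g b$. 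Hence, by Propositions \ref{formica} and \ref{formicaN} in their sub-interval versions (recalled after \eqref{passetto}), $-\cL^J_{M,D}$ and $-\cL^J_{M,N}$ are well-defined self-adjoint operators with simple eigenvalues, so the right-hand side of \eqref{polizia} makes sense. The elementary identity underlying everything is that $dM$ equals $\g^{1/\b}$ times the push-forward of $dm$ under $\phi$, i.e.\ $dM\bigl((c,d]\bigr)=\g^{1/\b}\,dm\bigl((c/\g,d/\g]\bigr)$ for all $c<d$.

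The heart of the argument is a one-line change of variables. Let $F$ be a Dirichlet eigenfunction on $I$ with eigenvalue $\lambda$, so $F(a)=F(b)=0$ and $F(x)=b_0(x-a)-\lambda\int_a^x dy\int_{[a,y)}dm(z)F(z)$ for some constant $b_0$; set $\widetilde F(y):=F(y/\g)$ on $J$. Then $\widetilde F(\g a)=\widetilde F(\g b)=0$, and substituting $z=\g z'$ and $y'=\g y''$ and using the push-forward identity,
\[
\int_{\g a}^{y}dy'\int_{[\g a,y')}dM(z)\,\widetilde F(z)=\g^{1+1/\b}\int_a^{y/\g}dy''\int_{[a,y'')}dm(z')F(z')=\g^{1+1/\b}\,\frac{b_0(y/\g-a)-F(y/\g)}{\lambda},
\]
the last equality being the eigenfunction equation for $F$. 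Rearranging gives $\widetilde F(y)=\frac{b_0}{\g}(y-\g a)-\lambda\g^{-(1+1/\b)}\int_{\g a}^{y}dy'\int_{[\g a,y')}dM(z)\widetilde F(z)$, so $\widetilde F$ is a Dirichlet eigenfunction of $-D_M D_x$ on $J$ with eigenvalue $\lambda\,\g^{-(1+1/\b)}$ (the value $b_0/\g$ of the constant is then forced by $\widetilde F(\g b)=0$). For the Neumann case I would run the same computation starting from $F(x)=a_0-\lambda\int_a^x dy\int_{[a,y)}dm(z)F(z)$ with $\int_{[a,b)}dm(z)F(z)=0$; the dilation preserves the zero-mean constraint since $\int_{[\g a,\g b)}dM(z)\widetilde F(z)=\g^{1/\b}\int_{[a,b)}dm(z')F(z')=0$, and the same rearrangement exhibits a Neumann eigenfunction with eigenvalue $\lambda\,\g^{-(1+1/\b)}$. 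In both cases $F\mapsto\widetilde F$ is a linear bijection between the corresponding eigenspaces, with inverse $G\mapsto G(\g\,\cdot)$.

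Finally I would conclude by counting. Since the eigenvalues are simple (Lemma \ref{volare} and Propositions \ref{formica}, \ref{formicaN}), the bijection above shows that $\lambda$ is an eigenvalue of $-\cL^I_{m,D}$ (resp.\ $-\cL^I_{m,N}$) if and only if $\lambda\,\g^{-(1+1/\b)}$ is an eigenvalue of $-\cL^J_{M,D}$ (resp.\ $-\cL^J_{M,N}$); and since $\g>0$ one has $\lambda\le x$ iff $\lambda\,\g^{-(1+1/\b)}\le x/\g^{1+1/\b}$. Combining these two equivalences gives exactly \eqref{polizia}. I do not expect a genuine obstacle here: the only points requiring care are writing $dM$ correctly as $\g^{1/\b}\phi_*(dm)$ and verifying that the dilation respects the boundary and zero-mean conditions defining the two operators — after that \eqref{polizia} drops out of the single change of variables above.
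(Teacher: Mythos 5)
Your proposal is correct and follows essentially the same route as the paper: the single change of variables $y'=\g y''$, $z=\g z'$ together with the identity $dM=\g^{1/\b}\phi_*(dm)$ (for $\phi(x)=\g x$) turns the integral eigenfunction equation on $I$ into the one on $J$ with eigenvalue $\l\,\g^{-(1+1/\b)}$, and monotone counting then gives \eqref{polizia}. The paper writes out only the Dirichlet computation and states that the Neumann case is similar; your explicit check that the dilation preserves the zero-mean constraint is a harmless elaboration of the same argument.
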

Trivially,  $M _{\g a} = M _{\g b}=0$ and  $dM \bigl( (\g a,\g
a+\e)\bigr)>0$, $dM \bigl( (\g b-\e, \g b)\bigr)>0$ for all $\e>0$
\begin{proof} For simplicity of notation we take $a=0$.
Suppose that $\l$ is an eigenvalue of the operator $-D_m D_x $ on
$[0,b]$ with Dirichlet b.c. at $0$ and $b$. This means that for a
nonzero function $F\in C(I)$ with $F(b)=0$  and a constant $c$ it
holds
\begin{equation}
 F(x)= cx -\l \int _0 ^x dy \int _{ [0,y)} dm (z) F(z) \,, \qquad \forall x \in I\,. \label{angelaaaa}
\end{equation}
Taking $X \in J$, the above identity implies that
\begin{multline}
F ( X/\g)= \frac{cX}{\g } -\l \int _0 ^{\frac{X}{\g}} dy   \int _{
[0,y)} dm (z) F(z) =\frac{c X}{\g}  -
\frac{\l}{\g} \int_0 ^X dY \int_{[0,\frac{Y}{\g})} dm (z) F(z)=\\
\frac{c X}{\g}  - \frac{\l }{\g^{1+1/\beta} } \int _0 ^X dY \int
_{[0,Y)} dM (Z) F(Z/\g)\,.
\end{multline}
Since trivially $F(X/\g)=0$ for $X=b\g$, the above identity implies
that $\l /\g^{1+1/\beta}$ is an eigenvalue of the operator $-D_{M}
D_x$  on $J$ with Dirichlet b.c.  and eigenfunction $F(\cdot /\g)$.
This implies \eqref{polizia} in the case of Dirichlet b.c. The
Neumann case is similar.
\end{proof}

\smallskip

We have now all the tools in order to prove Theorem  \ref{tacchino}:

\smallskip
\noindent  {\sl Proof of Theorem \ref{tacchino}}. Take  $m$ as in
Theorem  \ref{tacchino} and recall the notational convention stated
after the theorem. We first  prove
 \eqref{kikokoala1}, assuming without loss of generality that \eqref{kikokoala0} holds with $x_0=1$. By assumption,
with probability one, for any $n \in \bbN_+$ and any $k\in \bbN\,:\,
0\leq k \leq n$ it holds: (i) $ dm ( \{k/n\})=0$, (ii) $dm((k/n ,
k/n+\e))>0$ for all $\e>0$ if $k<n$, (iii) $dm((k/n-\e,k/n))>0$ for
all $\e>0$ if $k>0$. Below,  we assume that the realization of $m$
satisfies (i), (ii) and  (iii). This allows us to  apply the
Dirichlet--Neumann bracketing stated in Theorem  \ref{DNB} to the
non--overlapping subintervals $I_k :=[k/n, (k+1)/n]$, $k \in \{0,1,
\dots, n-1\}$.
 Due to the superadditivity (resp. subadditivity) of
the Dirichlet (resp. Neumann) eigenvalue counting functions (cf.
\eqref{treno20} and \eqref{treno30} in  Theorem \ref{DNB}), we get
for any $x \geq 0$ that $ N^{[0,1]} _{m ,D} (x)\geq \sum
_{k=0}^{n-1} N_{m,D} ^{I_k} (x)$, while $ N^{[0,1]} _{m ,N} (x)\leq
\sum _{k=0}^{n-1} N_{m,N} ^{I_k} (x)$. By taking the average over
$m$ and using that $m$ has   stationary increments we get that $
\bbE \cN ^{[0,1]} _{m,D} (x)\geq n \bbE \cN _{m,D} ^{[0,1/n]} (x)$
and $ \bbE \cN^{[0,1]} _{m,N} (x)\leq  n \bbE \cN _{m,N} ^{[0,1/n]}
(x)$.
  Using now the
scaling property of Lemma  \ref{similare}  with $\g=n,\b=\a$ and the
self--similarity of $m$, we conclude that
\begin{align}
& \bbE \cN ^{[0,1]} _{m,D} (x)\geq n  \bbE \cN  _{m,D}  ^{[0,1/n]}
(x) =
 n\bbE \cN   _{M, D} ^{[0,1] } (x/n ^{1+1/\alpha  } )
 =
  n\bbE \cN   _{m, D} ^{[0,1] } (x/n ^{1+1/\alpha  } )  \,,  \label{soldi1}\\
 & \bbE \cN ^{[0,1]} _{m,N} (x)\leq  n  \bbE \cN _{m,N}  ^{[0,1/n]} (x) =
  n \bbE \cN _{M,N} ^{[0,1] } (x/n ^{1+1/\alpha  } )=
 n\bbE \cN   _{m, N} ^{[0,1] } (x/n ^{1+1/\alpha  } )  \,,\label{soldi2}
\end{align}
where $M(x):= n^{1/\a} m(x/n)$. On the other hand, by
\eqref{treno10} of Theorem \ref{DNB}
\begin{equation}\label{cagnolino}  \bbE \cN ^{[0,1]} _{m,D} (x)\leq
\bbE \cN ^{[0,1]} _{m,N} (x)\leq \bbE \cN ^{[0,1]} _{m,D} (x)+2 \,.
\end{equation}
From the above estimates \eqref{soldi1}, \eqref{soldi2} and
\eqref{cagnolino},  we conclude that
\begin{equation}\label{bianchini}
\bbE \cN ^{[0,1]} _{m,D} (1) \leq n^{-1} \bbE \cN^{[0,1]} _{m,D} (
n^{1+1/\alpha } )\leq n^{-1} \bbE \cN^{[0,1]} _{m,N} ( n^{1+1/\alpha
} ) \leq \bbE \cN ^{[0,1]} _{m,N} (1) \leq \bbE \cN ^{[0,1]} _{m,D}
(1)+2 \,.
\end{equation}
We remark that \eqref{kikokoala0} with $x_0=1$ simply reads  $\bbE
\cN ^{[0,1]} _{m,D} (1)<\infty$.  Since the eigenvalue counting
functions are monotone, in the above estimate \eqref{bianchini}  we
can think of $n$ as any positive number larger than $1$. Then,
substituting $n^{1+1/\alpha }$ with $x$ we get \eqref{kikokoala1}.

\smallskip

\smallskip

In order to  prove \eqref{kikokoala2}, we first prove the joint
self--similarity of $m, m^{-1}$:   given $\g>0$, it holds
\begin{multline}\label{filmbis}
\bigl(  m(x), m^{-1} (y)\,:\, x ,y \geq 0 \bigr)\sim
 \bigl( \g^{1/\alpha  } m (x/\g), \g m^{-1} ( \g^{-1/\alpha  } y)\,:\, x,y  \geq 0 \bigr)\sim \\
\bigl(  \g m(x/\g^\a ),  \g^{\a} m^{-1} (x/\g)\,:\, x ,y \geq
0\bigr) \,.
\end{multline}
To check the above claim, first we observe that for each $x \geq 0$
it holds
\begin{equation}\label{gioco}
\inf \left\{t \geq 0 \,:\,\g^{1/\alpha  } m(t/\g)>y\right\}= \g \inf
\left\{ t\geq 0\,:\, m(t)>
 \g^{-1/\alpha } y\right\}= \g m^{-1}(
\g^{-1/\alpha  } y)\,.
\end{equation}
On the other hand, by the self--similarity of $m$  and  by the
definition of the generalized inverse function, we get
\begin{equation}\label{gioco2}\left( \g^{1/\alpha } m(x/\g), \inf \left\{t \geq 0 \,:\,\g^{1/\alpha  } m(t/\g)>y\right\}\,:\, x,y \geq0 \right)\sim
\left(m(x),m^{-1} (y)\,:\, x, y \geq 0 \right)\,.
\end{equation}
The first identity in \eqref{filmbis} follows from \eqref{gioco} and
\eqref{gioco2}. The second identity follows by replacing
$\g^{1/\alpha }$ with $\g$. This concludes the proof of
\eqref{filmbis}.

\smallskip

 Recall the convention established after
\eqref{kikokoala2}.  We already know that $d m^{-1}$ is a continuous
function a.s., hence a.s. it holds (P1) $d m^{-1}\bigl( \{m(k/n)\}
\bigr)=0$ for all $n \in \bbN$  and $k \in \bbN: 0 \leq k \leq n$.
By identity \eqref{matriosca} $m^{-1}(x)=m^{-1}(y)$ if and only if
$x,y \in \bigl[m (z_i-), m(z_i) \bigr]$ for some jump point $z_i$ of
$m$.
 Since by
property (iv) in Theorem \ref{tacchino}   $m(k/n) $ is not a jump
point for $m$  a.s. (with $k,n$ as above), the following properties
hold a.s.: (P2) $dm^{-1} \bigl( (m(k/n) , m(k/n)+\e)\bigr)>0$ for
all $\e>0$ if $0\leq k<n$ and (P3) $dm^{-1} \bigl( (m(k/n)-\e ,
m(k/n))\bigr)>0$ for all $\e>0$ if $0<k\leq n$.
 In what follows we assume that the realization of $m$ satisfies the
 properties (P1), (P2) and (P3). This allows us to apply the
 Dirichlet--Neumann bracketing to the measure $dm^{-1}$ and to the
  non--overlapping subintervals $I_k=[m(k/n),
m((k+1)/n)]$, $k\in \{0,1,\dots, n-1\}$. We point out that
  the measure  $dm^{-1}$  restricted to  each subinterval $I_k$ is univocally determined by
the values $\{m(x)-m(k/n)  \,:\, x \in [k/n, (k+1)/n]$. The fact
that $m$ has  stationary increments, allows to conclude that the
random functions $N_{m^{-1} ,D/N} ^{I_k} (\cdot)$ are identically
distributed.

We observe now that \eqref{filmbis} with $\g= n^{1/\a}$ implies that
\begin{equation}\label{tiffany_maggio}
\bigl(  m(x), m^{-1} (y)\,:\, x ,y \geq 0 \bigr)\sim \bigl( n^{1/\a}
m(x/n ),  n  m^{-1} (x/n^{1/\a})\,:\, x ,y \geq 0\bigr) \,.
\end{equation}
Then, using the Dirichlet--Neumann, Lemma \ref{similare} with
$\b=1/\a$ and $\g=n^{1/\a}$ and the joint self--similarity
\eqref{tiffany_maggio},we conclude that
\begin{align}
& \bbE \cN ^{[0,m(1)]} _{m^{-1} ,D} (x)\geq n  \bbE \cN  _{m^{-1},D}
^{[0,m(1/n)]} (x) =
 n\bbE \cN   _{M, D} ^{[0,n^{1/\a}m(1/n) ] } (x/n ^{1+1/\alpha   } )
 =
  n\bbE \cN   _{m^{-1}, D} ^{[0,m(1)] } (x/n ^{1+1/\alpha  } )  \,,  \label{soldi1bis}\\
 & \bbE \cN ^{[0,m(1)]} _{m^{-1},N} (x)\leq  n  \bbE \cN _{m^{-1},N}  ^{[0,1/n]} (x) =
  n \bbE \cN _{M,N} ^{[0 ,n^{1/\a}m(1/n)   ] } (x/n ^{1+1/\alpha  } )=
 n\bbE \cN   _{m^{-1}, N} ^{[0,m(1)] } (x/n ^{1+1/\alpha  } )  \,,\label{soldi2bis}
\end{align}
where now $M(x)= n m^{-1} (x/ n^{1/\a})$. Note that
\eqref{soldi1bis} and \eqref{soldi2bis} have the same structure of
\eqref{soldi1} and \eqref{soldi2}, respectively. The conclusion then
follows  the same arguments used for \eqref{kikokoala1}. \qedhere
\qed


\section{Proof of Theorem \ref{affinita}}\label{affinita_proof}

As already mentioned in the Introduction,  the proof of  Theorem
\ref{affinita} is based on a special coupling introduced in
\cite{FIN} (and very similar to the coupling of \cite{KK} for the
random barrier model). If $\t(x)$ is itself the $\a$--stable law
with Laplace transform $ \bbE \bigl[e^{-\l \t(x) }\bigr] =
e^{-\l^\a}$, this coupling is very simple since it is enough to
define, for each realization of $V$ and for all  $n\geq 1$, the
random variables $\t_n(x)$'s as \begin{equation}\label{guerra}
 \t_n
(x) = n^{1/\a} \Big[ V\bigl(x+\frac{1}{n}\bigr)-V(x) \Big]\,, \qquad
\forall x \in \bbZ_n\,.
\end{equation}
Due to \eqref{salutare} and the fact that $V$ has independent
increments, one easily derives that the $V$--dependent random field
$\{ \t_n(x) \,:\, x \in \bbZ_n \}$ has the same law of $\{ \t(nx)
\,:\, x \in \bbZ_n \}$. In the general case one proceeds as follows.
Define a function $G:[0,\infty) \rightarrow [0,\infty)$ such that
$$\cP ( V(1)>G(x) )= \bbP ( \t(0)>x)\,, \qquad \forall x \geq 0
\,.$$ (Recall that $V$ is defined on the probability space $(\Xi,
\cF,\cP)$.) The above function $G$ is well defined since $V(1)$ has
continuous distribution, $G$ is right continuous and nondecreasing.
Then the generalized inverse function
$$G^{-1} (t) = \inf \{ x \geq 0 \,:\, G(x) >t \}$$
is nondecreasing and right continuous. Finally, set
\begin{equation}\label{spesa}
\t_n (x)= G^{-1} \left( n^\frac{1}{\a} \left[V\bigl(
x+\frac{1}{n}\bigr)- V(x) \right] \right) \,, \qquad x \in \bbZ_n\,.
\end{equation}
It is trivial to check that the $V$--dependent random field $\{
\t_n(x) \,:\, x \in \bbZ_n \}$ has the same law of $\{ \t(nx) \,:\,
x \in \bbZ_n \}$. Indeed, since  $V$ has independent and stationary
increments one obtains that the $\t_n(x)$'s are i.i.d., while since
$n^{\frac{1}{\a} }\left( V(x+\frac{1}{n})-V(x)\right)$ and $V(1)$
have the same law, one obtains that
$$\cP( \t_n (x) >t)= \cP( G^{-1} (V(1))
>t)=\cP( V(1)>G(t) )= \bbP( \t(nx )
>t)\,, \qquad \forall t \geq 0\,.$$
We point out that the coupling obtained by this general method does
not lead to \eqref{guerra} in the case that $\t(x)$ is itself the
$\a$--stable law with Laplace transform $ \bbE \bigl[e^{-\l \t(x)
}\bigr] = e^{-\l^\a}$.

\smallskip


\subsection{Proof of Point (i)}
Let us  keep  definition \eqref{spesa}.
 For any $n\geq 1$ we
introduce the generalized
 trap model $\{\tilde X^{(n)}(t)\}_{t\geq
0}$ on $\bbZ_n$ with jump rates
$$c_n (x,y)=
\begin{cases} \g^2  L_2(n)  n^{1+\frac{1}{\a} } \t_n (x) ^{-1+a} \t_n (y)^{a} & \text{ if }
|x-y|=1/n\,\\
0 & \text{ otherwise}\,,
\end{cases}
$$
where  $ \g= \bbE ( \t(x)^{-a}) $.
The above jump rates can be written as $c_n(x,y)= 1/ H_n(x) U_n
(x\lor  y)$ for $|x-y|=1/n$ by taking
$$
\begin{cases}
 U_n(x)=\g^{-2} n^{-1}  \t_n (x-\frac{1}{n})^{-a} \t_n (x)^{-a} \,\\
  H_n(x)=  L_2(n)^{-1} n^{-\frac{1}{\a}}  \t_n (x)  \,.
  \end{cases}
$$
  Note that in all cases both $U_n$ and $H_n$ are functions
of the $\a$--stable subordinator  $V$.

Then the following holds
\begin{Le}\label{normale}
Let $m_n$ be defined as in \eqref{treno3} by means of the above
functions $U_n,H_n$. Then for almost any realization of the
$\a$--stable subordinator $V$,  $ \ell_n \rightarrow 1$ and the
measures  $dm_n$ weakly converge to the measure $d V_*$ (recall
definition \eqref{percome}).
\end{Le}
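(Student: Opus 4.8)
The plan is to compute $\ell_n$ and $dm_n$ explicitly in terms of the subordinator $V$ and then invoke the strong law of large numbers for the i.i.d. increments of $V$. First I would unravel the definitions. By \eqref{treno1} and the formula for $U_n$, one has $x_k^{(n)} = S_n(k/n) = \sum_{j=1}^k U_n(j/n) = \g^{-2} n^{-1}\sum_{j=1}^k \t_n((j-1)/n)^{-a}\t_n(j/n)^{-a}$; in particular $\ell_n = x_n^{(n)} = \g^{-2} n^{-1}\sum_{j=1}^n \t_n((j-1)/n)^{-a}\t_n(j/n)^{-a}$. The random variables $\{\t_n(j/n)\}_j$ are i.i.d.\ with the law of $\t(0)$ (by the coupling \eqref{spesa}), and the summands $\t_n((j-1)/n)^{-a}\t_n(j/n)^{-a}$ form a $1$-dependent stationary array; since $\bbE[\t(0)^{-a}]<\infty$ (and, when $a>0$, $\t(x)$ is bounded below so products have finite mean too, whence the strong LLN applies to the $1$-dependent array — this is where the hypothesis on $a>0$ enters), we get $n^{-1}\sum_{j=1}^n \t_n((j-1)/n)^{-a}\t_n(j/n)^{-a} \to \bbE[\t(0)^{-a}]^2 = \g^2$ a.s. Hence $\ell_n \to 1$ a.s.

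Next I would handle the weak convergence of $dm_n$. Since the supports of all $dm_n$ are contained in $[0,\ell_n]\subset[0,2]$ for $n$ large (once $\ell_n\to 1$), it suffices by the remark after Theorem \ref{speriamo} to check $\int f\,dm_n \to \int f\,dV_*$ for every $f\in C_c(\bbR)$, and since everything lives on a fixed compact, for every bounded continuous $f$ it is enough to prove convergence of the cumulative distribution functions $m_n(x)\to V_*(x)$ at every continuity point of $V_*$, i.e.\ (since $V$ is a.s.\ continuous at every fixed $x$) at every $x\in[0,1)$; together with $m_n(\ell_n)=\sum_{k=0}^n H_n(k/n)\to m(1)=V(1)$ this will give full weak convergence. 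Now $m_n(x) = \sum_{k:\,x_k^{(n)}\le x} H_n(k/n) = L_2(n)^{-1} n^{-1/\a}\sum_{k:\,x_k^{(n)}\le x}\t_n(k/n)$. Fix $x\in(0,1)$. By the LLN applied above, $x_k^{(n)}\le x$ is, for large $n$, essentially equivalent to $k\le nx$ (more precisely $x_{\lfloor nx\rfloor}^{(n)}\to x$ a.s.\ by the same Cesàro argument, and the error in the index is $o(n)$), so the sum is over $k\lesssim nx$. Then
\[
L_2(n)^{-1} n^{-1/\a}\sum_{k=1}^{\lfloor nx\rfloor}\t_n(k/n)
\;=\; L_2(n)^{-1} n^{-1/\a}\sum_{k=1}^{\lfloor nx\rfloor} G^{-1}\!\Big( n^{1/\a}\big[V(k/n)-V((k-1)/n)\big]\Big).
\]
Observe that $\sum_{k=1}^{\lfloor nx\rfloor} n^{1/\a}\big[V(k/n)-V((k-1)/n)\big] = n^{1/\a}V(\lfloor nx\rfloor/n) \approx n^{1/\a}V(x)$, but we need the sum \emph{after} applying $G^{-1}$. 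Here one uses that $\t(0)$ is in the domain of attraction of an $\a$-stable law with $\bbP(\t(0)>t)=L_1(t)t^{-\a}$, equivalently (by Proposition 0.8(v) of \cite{R}) that $h(t)=L_2(t)t^{1/\a}$ with $h$ from \eqref{roman1aia}; this is exactly the scaling needed so that partial sums of $\lfloor n\cdot\rfloor$ copies of $\t$, normalized by $h(n)=L_2(n)n^{1/\a}$, converge to the $\a$-stable subordinator $V$. Concretely, the coupling \eqref{spesa} is designed so that $L_2(n)^{-1}n^{-1/\a}\sum_{k=1}^{\lfloor nx\rfloor}\t_n(k/n) \to V(x)$ a.s.\ (not merely in law): one shows $G^{-1}(n^{1/\a}w)/(L_2(n)n^{1/\a}) \to$ the identity in the appropriate averaged sense, and the pure-jump structure of $V$ together with $V(u)=\sum_{v\le u}(V(v)-V(v-))$ lets one pass the a.s.\ convergence through. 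So $m_n(x)\to V(x)=V_*(x)$ a.s.\ for each fixed $x\in[0,1)$, and the same argument with $x=1$ gives $m_n(\ell_n)\to V(1)$.

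The main obstacle is precisely the last step: upgrading the classical \emph{in-law} stable limit theorem ($h(n)^{-1}\sum_{k=1}^{\lfloor n\cdot\rfloor}\t(k) \Rightarrow V$) to the \emph{almost sure} statement $m_n(\cdot)\to V(\cdot)$ under the coupling \eqref{spesa}, for a fixed realization of $V$. The clean way is to use the explicit representation: $\t_n(k/n) = G^{-1}(n^{1/\a}[V(k/n)-V((k-1)/n)])$, so $m_n(x)$ is a deterministic functional of the Poisson point process of jumps of $V$ in $[0,x]$; one shows that large jumps dominate (the small-jump contribution is negligible because $\sum_j w_j \bbI(w_j\le\delta)$ is controlled), and on the large jumps $G^{-1}(n^{1/\a}w)/(L_2(n)n^{1/\a})\to w$ as $n\to\infty$ by the regular-variation relation between $G$ (hence $G^{-1}$) and $h$. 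This localizes the computation to finitely many jumps per bounded interval, where convergence is immediate, and yields the a.s.\ weak convergence $dm_n\to dV_*$. The case $a=0$ with $\t(x)$ exactly $\a$-stable is the degenerate one where $G^{-1}$ is linear and \eqref{spesa} collapses to \eqref{guerra}, making $m_n(x)=n^{1/\a}[V(\lfloor nx\rfloor/n)]\to V(x)$ trivially (and $L_2\equiv 1$), which is consistent with part (ii) of Theorem \ref{affinita}.
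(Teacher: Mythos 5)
Your proof is correct and follows essentially the same route as the paper: the strong-LLN computation giving $\ell_n\to 1$ (trivial for $a=0$, a $1$--dependent triangular-array LLN for $a>0$ using the lower bound on $\t(x)$) is identical, and your large-jump/small-jump analysis of $m_n$ under the coupling \eqref{spesa} is precisely the content of Proposition 5.1 in \cite{BC1} (and of Section 3 of \cite{FIN}), which the paper simply cites at this point rather than reproving. One imprecision worth flagging: the negligibility of the small-increment contribution does not follow from controlling $\sum_j w_j\,\bbI(w_j\leq \d)$ alone, because $g_n(x):=G^{-1}(n^{1/\a}x)/(L_2(n)n^{1/\a})$ is not dominated by a constant multiple of $x$ near $0$; one needs the regular-variation bound $g_n(x)\leq C' x^{1-\d'}$ with $\d'<1-\a$ (Lemma 3.2 of \cite{FIN}) together with the finiteness of $\bbE\bigl[\sum_j w_j^{1-\d'}\bbI(w_j\leq\d)\bigr]$, exactly as the paper does later in its own Lemma \ref{franchi1} for the barrier model.
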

\begin{proof}
Due to our definition \eqref{treno1} we have
$$ S_n\Big(\frac{k}{n}\Big)= \frac{1}{n} \sum _{j=1}^{k} \g^{-2} \t_n \Big(\frac{j-1}{n}\Big)^{-a}
 \t_n \Big(\frac{j}{n}
\Big)^{-a}\,, \qquad 0 \leq k \leq n\,,
$$
with the convention that the sum in the r.h.s. is zero if $k=0$. If
$a=0$ trivially  $\g=1$ and $S(k/n)=k/n$.
If $a>0$ we can apply the strong law of large numbers for triangular
arrays. Indeed, all addenda have the same law and they are
independent if they are not consecutive, moreover they have bounded
moments of all orders since  $\t(x)$ is bounded from below by a
positive constant a.s. (this assumption is used only here and could
be weakened in order to assure the validity of the strong LLN).
 Due
to the choice of $\g$ we have that $ \g^{-2} \t_n
\bigl(\frac{j-1}{n}\bigr)^{-a} \t_n \bigl (\frac{j}{n} \bigr)^{-a}$
has mean $1$. By the strong law of large number we conclude that for
a.a. $V$ it holds
$ \lim_{n\uparrow \infty}  S\bigl( \lfloor xn \rfloor /n\bigr) =x$
for all $ x \geq 0$.  This proves in particular that $\ell_n:=
S_n(1)\rightarrow 1$. It remains to prove that for all $f \in C_c
(\bbR)$ it holds
\begin{equation}\label{cif} \lim _{n\uparrow \infty}  \sum_{k=0}^n
f(S_n(k/n))H_n(k/n) = \int _0 ^1 f(s) d V_* (s) \,.
\end{equation}
This limit can be obtained by reasoning as in the proof of
Proposition 5.1 in \cite{BC1}, or can be derived by Proposition 5.1
in \cite{BC1} itself together with the fact that $\cP$ a.s. $V$ has
no jump at $0,1$. To this aim one has to observe that the constant
$c_\e$ (where $\e=1/n$) in \cite{FIN} and \cite{BC1}[eq. (49)]
equals our quantity $1/h(n)=1/\left(n^{1/\a} L_2 (n)\right)$ (recall
the definitions preceding Theorem \ref{affinita}). In particular,
$H_n (k/n)= c_{1/n} \t _n (k/n)$.\qedhere
\end{proof}
Due to the above result, Point  (i)  in   Theorem \ref{affinita}
follows easily from Theorem \ref{speriamo} and the fact that the
random fields $  \{ \t_n(x) \,:\, x \in \bbZ_n \}$ and $\{ \t(nx)
\,:\, x \in \bbZ_n \}$ have the same law for all  $n\geq 1$.

\subsection{Proof of Point (ii)}
Point (i) can be proved in a similar and simpler way. In this case,
we define $\t_n(x)$ as in \eqref{guerra} and  we consider the
generalized
 trap model $\{\tilde X^{(n)}(t)\}_{t\geq
0}$ on $\bbZ_n$ with jump rates
$$c_n (x,y)=
\begin{cases}     n^{1+\frac{1}{\a} } \t_n (x) ^{-1}   & \text{ if }
|x-y|=1/n\,\\
0 & \text{ otherwise}\,,
\end{cases}
$$
  with associated functions $$U_n(x)=1/n\,, \qquad  H_n(x)=
n^{-\frac{1}{\a} }\t_n (x) = V(x+1/n)-V(x)=:\D_n V(x)\,.$$ By this
choice,  $ dm_n = \sum _{k=0}^n \d_{k/n} \D_n V(k/n)$. Trivially, $
\ell_n =1$ and $dm_n\rightarrow d  V_*$ for all realizations of $V$
giving zero mass to the extreme points $0$ and $1$. Since this event
takes place $\cP$--almost surely, the proof of part (ii) is
concluded.

\subsection{Proof of Point (iii)}
 Part (iii) of   Theorem \ref{affinita}   (i.e.
\eqref{leoncino}) follows  from Theorem \ref{tacchino} and Lemma
\ref{dimitri} below.
 The self--similarity  of $V$ is the following:
 for each $\g>0$ it holds
\begin{equation}\label{film}
\bigl(V(x)\,,\, x\in \bbR\bigr) \sim \bigl( \g^{\frac{1}{\a} } V(
x/\g )\,:\, x \in \bbR\bigr) \,.
\end{equation}
Indeed, both processes are c\`{a}dl\`{a}g, take value $0$ at the
origin and  have independent increments with the same law due to
\eqref{salutare}.

\begin{Le}\label{dimitri}
Taking $m=V$, the bound \eqref{kikokoala0} is satisfied.
\end{Le}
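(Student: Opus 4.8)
The plan is to reduce \eqref{kikokoala0} to the single estimate $\bbE[\cN^{[0,1]}_{V,D}(1)]<\infty$, where $\cN^{[0,1]}_{V,D}(x)=\sharp\{k\ge 1:\l_k(V)\le x\}$ is the Dirichlet eigenvalue counting function of $-D_VD_x$ on $(0,1)$ (so $x_0=1$ will do). By Corollary \ref{mela}, if $[0,1]$ is split into $n$ subintervals on each of which $-D_VD_x$ with Dirichlet conditions has no eigenvalue $\le 1$, then $\cN^{[0,1]}_{V,D}(1)\le 2n$; and by the lower bound \eqref{materazzi}, a subinterval $P=[a,b]$ has this property as soon as $(b-a)\,dV(P)<1$. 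So everything reduces to producing, for $\cP$-a.e.\ $V$, a partition of $[0,1]$ into such ``small product'' intervals whose number has finite expectation. The naive equipartition into $n$ equal pieces needs $n\gtrsim V(1)$, which is useless here because $\bbE[V(1)]=\infty$; the whole point is that the large jumps of $V$ must be handled separately from the rest.

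Fix $\d>0$ and use the L\'{e}vy--It\^{o} decomposition of $V$ on $[0,1]$: write $V=W+J$, where $J$ collects the jumps of $V$ of size $>\d$, occurring at $0<\tau_1<\dots<\tau_K<1$ with $K$ a Poisson variable of mean $\tfrac{c}{\a}\d^{-\a}$, and $W$ is the complementary subordinator, which is continuous at each $\tau_i$ and satisfies $\bbE[W(1)]=\tfrac{c}{1-\a}\d^{1-\a}<\infty$. Put $L:=\lceil 4W(1)\rceil\vee 1$, start from the equipartition points $\{r/L:0\le r\le L\}$ of $[0,1]$, and refine this partition by inserting, for each $i$, two points $\tau_i-\e_i$ and $\tau_i+\e_i$ with $\e_i>0$ so small that (a) none of these points is a jump point of $V$, (b) the intervals $(\tau_i-\e_i,\tau_i+\e_i)$ are pairwise disjoint and contain no $\tau_j$ with $j\neq i$, and (c) $2\e_i\bigl(w_i+dW([\tau_i-\e_i,\tau_i+\e_i])\bigr)<1$, which is possible since $dW$ of a shrinking neighborhood of $\tau_i$ tends to $0$ by continuity of $W$ at $\tau_i$, where $w_i$ denotes the size of the jump of $V$ at $\tau_i$. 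For $\cP$-a.e.\ $V$ this is an admissible partition for Theorem \ref{DNB}: its points lie off the countable jump set of $V$, and $E_V=[0,\infty)$ a.s.\ supplies the required positivity of $dV$ near each point. Every resulting piece either contains no large jump---then it lies inside one equipartition interval, so it has length $\le 1/L$ and its $V$-mass equals its $W$-mass $\le W(1)/L\le 1/4$, hence product $\le 1/4<1$---or it is one of the pieces straddling some $\tau_i$, whose product is $<1$ by (c). Thus, by \eqref{materazzi}, no piece carries a Dirichlet eigenvalue $\le 1$.

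Since this partition has at most $L+2K$ pieces, Corollary \ref{mela} gives $\cN^{[0,1]}_{V,D}(1)\le 2(L+2K)$ $\cP$-almost surely, so
$\bbE[\cN^{[0,1]}_{V,D}(1)]\le 2\bbE[L]+4\bbE[K]\le 8\,\bbE[W(1)]+4\,\bbE[K]+2=\tfrac{8c}{1-\a}\d^{1-\a}+\tfrac{4c}{\a}\d^{-\a}+2<\infty$, which is \eqref{kikokoala0} with $x_0=1$. The main obstacle is precisely the divergence $\bbE[V(1)]=\infty$, which kills any bound obtained through an equal partition; resolving it forces the split into large and small jumps and, with it, the technical care needed to place the refined partition points off the random jump set of $V$ while keeping the number of pieces controlled by $W(1)$ and $K$ alone. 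Everything else is just the elementary bound \eqref{materazzi} together with Corollary \ref{mela}.
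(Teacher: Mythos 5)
Your proof is correct, and it shares the essential backbone of the paper's argument: split the jumps of $V$ on $[0,1]$ into the finitely many large ones and a small--jump subordinator of finite mean, build a partition whose expected cardinality is finite, and combine Corollary \ref{mela} with the elementary lower bound \eqref{materazzi}. Where you genuinely diverge is in the windows around the large jumps. The paper constrains the window size only through the small--jump mass it contains, so it cannot exclude eigenvalues $\le 1$ on those windows; it instead proves a separate Claim --- via the normalization, the H\"older--type bound of Lemma \ref{martin}, and the orthogonality of two putative eigenfunctions --- that each window carries \emph{at most one} eigenvalue $\le 1$, and concludes with $\cN^{[0,1]}_{V,D}(1)\le 2R+N$. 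You instead shrink each window until $(\text{length})\times(\text{$V$-mass})<1$, which is always possible since each jump $w_i$ is a.s.\ finite; then \eqref{materazzi} excludes all eigenvalues $\le 1$ there as well, and the orthogonality argument disappears entirely. That is a clean simplification. Your treatment of the complement (equipartition into $\lceil 4W(1)\rceil$ pieces) is interchangeable with the paper's (pieces on which $V^{(1)}$ increases by an amount in $[1/2,1)$); both yield a count of order $W(1)$ with finite expectation. Two points worth making explicit: the equipartition points $r/L$ are random (through $L$) but rational, and a.s.\ no rational is a jump point of $V$, so the hypothesis $m_{a_r}=0$ of Theorem \ref{DNB} does hold at them; and in your estimate for a piece containing no large jump, the bound $W(1)/L\le 1/4$ should be read as a bound on the product length$\,\times\,$mass (length $\le 1/L$, mass $\le W(1)$), not on the mass alone.
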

\begin{proof}
Using the notation of Section \ref{persico}, we denote by
$\cN_{V,D}^{[0,1]}(1)$ the number of   eigenvalues not larger than
$1$ of the operator $- D_V D_x$ on $[0,1]$ with Dirichlet boundary
conditions. We assume that $V$ has no jump at $0,1$ (this happens
$\cP$--a.s.).  We recall that $V$ can be obtained by means of the
identity $dV= \sum _{j\in J} x_j \d_{v_j}$, where the random set
$\xi=\{(x_j,v_j): j\in J\}$ is the realization of a inhomogeneous
Poisson point process    on $\bbR\times \bbR_+$  with intensity $c
v^{-1-\a} dx dv$, for a suitable positive constant $c$. In order to
distinguish between the contribution of big jumps and not big jumps
it is convenient to work with two independent inhomogeneous Poisson
point processes $\xi ^{(1)}$ and $\xi ^{(2) }$ on $\bbR\times
\bbR_+$ with intensity $c v^{-1-\a} \bbI (v\leq 1/2) dx dv$ and  $c
v^{-1-\a} \bbI(v>1/2) dx dv$. We write $\xi ^{(1)}= \{ (x_j,v_j): j
\in J_1\}$ and  $\xi ^{(2)}= \{ (x_j,v_j): j \in J_2\}$. The above
point process $\xi$ can be defined as $\xi=\xi ^{(1)}\cup \xi
^{(2)}$. Moreover, a.s. it holds $\xi ^{(1)} \cap \xi ^{(2)}
=\emptyset$ (this fact will be understood in what follows). By the
Master Formula (cf. Proposition (1.10) in \cite{RY}), it holds
\begin{align}
& \bbE\Big[\sum _{ j\in J_1\,:\,  x_j \in [0,1] } v_j \Big] =c \int
_0^1 dx \int _0 ^{1/2} dv\,  v^{-\a} < \infty\,,\label{adavide1} \\
&  \bbE\Big[\sharp \{j\in J_2\,:\, x_j \in [0,1]\}\Big] =c \int _0^1
dx \int _{1/2} ^\infty dv\, v^{-1-\a} < \infty \,.\label{adavide2}
\end{align}
We label in increasing order the points in  $\{x_j \,:\, j\in
J_2\,,\; x_j \in [0,1]\}$ as $y_1<y_2< \dots < y_N$ (note that the
set is finite due to \eqref{adavide2}).

Given $\d\in (0,1/8)$, we take   $\e\in (0,1) $ small enough that

\begin{itemize}

\item[(i)] the intervals $(y_i-\e, y_i+\e)$ are included in $(0,1)$ and  do not intersect as $i$ varies
from $1$ to $N$,

\item[(ii)]  for all $i:1\leq i\leq N$, it holds $\sum _{j\in J_1: x_j \in (y_i-\e, y_i+\e)} v_j <\d
$,

\item[(iii)] for all $i: 1\leq i \leq N$, the points $y_i-\e$ and
$y_i+\e$ do not belong to $\{x_j:j\in J_1\}$.
\end{itemize}

Defining  $V^{(1)}(t)= \sum _{j\in J_1\,:\, x_j \leq t} v_j$, the
last condition (iii) can be stated as follows: for all $i: 1\leq i
\leq N$, the points $y_i-\e$ and $y_i+\e$  are not jump points for
$V^{(1)}$.

 By construction the
function $V^{(1)}$ has jumps not larger than $1/2$. In particular,
all the intervals $ A_0= (0, y_1-\e)$, $A_1=(y_1+\e, y_2-\e)$,
$A_2=(y_2+\e, y_3-\e)$,..., $A_{N-1} = (y_{N-1}+\e, y_{N}-\e) $,
$A_N=(y_N+\e,1)$ can be partitioned in
 subintervals  such that, on each
subinterval,  the function  $V^{(1)}$ has increment in $[1/2, 1)$
and has no jump at the border (recall property (iii) above). As a
consequence, the total number  $R$  of subintervals is bounded by $2
V^{(1)} (1)$, which has finite expectation due to \eqref{adavide1}.
By the bound \eqref{materazzi} in Lemma \ref{volare}, we get that
the operator $-D_V D_x$ on any subinterval with Dirichlet boundary
conditions has no eigenvalues smaller than $2$. This observation and
Corollary \ref{mela} imply that
\begin{equation}\label{profitto}
 \cN ^{[0,1]} _{D, V} (1) \leq 2R + \sum _{i=1}^N \cN
^{[y_i-\e,y_i+\e]} _{D, V} (1)\,.
\end{equation}

\noindent {\bf Claim}: {\sl For each $i:1\leq i \leq N$ it holds
$\cN ^{[y_i-\e,y_i+\e]} _{D, V} (1)\leq 1$.}

\smallskip

\noindent
 {\sl Proof of the
claim.} We reason by contradiction supposing that $f_1$ and $f_2$
are eigenfunctions of the Dirichlet operator $-D_V D_x$ on
$U=[y_i-\e, y_i+\e]$,  whose corresponding eigenvalues $\l_1$ and
$\l_2$ satisfy  $0<\l_1<\l_2\leq 1$. We can take $f_1$ and $f_2$
continuous on $U$, satisfying $\int _U f_j ^2 (x) dV(x)=1$ and
\begin{equation}\label{girotondo} |f_j(x)- f_j(y_i) |\leq \sqrt{|x-y_i|}
\,,\qquad x \in U
\end{equation}
for $j=1,2$. Indeed, recall that $|f(x)-f(y)|^2 \leq q_D(f)  |x-y|$ for any $f \in  Q(q_D)$ in Section \ref{sec_dnb}.    
  Calling $\D= dV (\{y_i\})$,  \eqref{girotondo}
and property (ii)   imply that
$$ 1= \int _U f_j ^2 (x) dV(x) \leq \D f_j^2 (y_i)+ \d( |f_j(y_i)|+ \sqrt{\e} )^2 \leq \D f_j^2 (y_i)+ 2\d f^2_j(y_i) + 2 \d
\e\,.
$$
In particular, we  get $ f_j ^2 (y_i) \geq (1- 2 \d\e)/(\D+ 2\d)$.
Due to our choice of the constants, $ 1-2\d\e \geq 1- 2 (1/8)=3/4$,
while $\D+2\d \leq\D+ 1/4< (3/2)\D$ (recall that $\D >1/2$). Hence,
we get that $\D f_j ^2 (y_i) \geq 1/2$. On the other hand, using the
orthogonality between $f_1$ and $f_2$, it must be
\begin{equation}\label{ham1} 1/4\leq  \bigl| \D f_1 (y_i) f_2
(y_i)\bigr| =\bigl| \int _{U\setminus\{y_i\} } f_1 (x) f_2 (x)
dV(x)\bigr| \leq \d ( |f_1 (y_i)|+\sqrt{\e}) ( |f_2
(y_i)|+\sqrt{\e}) \,.
 \end{equation}
Since by construction $\e\leq 2\leq  \D f_j ^2 (y_i)$ and $\D> 1/2$
we can bound \begin{equation}\label{ham2} |f_j (y_i)|+\sqrt{\e}\leq
\sqrt{2} \sqrt{\D} |f_j (y_i)|+\sqrt{\e}\leq (1+\sqrt{2})\sqrt{\D}
|f_j (y_i)|\,.
\end{equation}
Combining \eqref{ham1} and \eqref{ham2}, we conclude that $$1/4\leq
\bigl| \D  f_1 (y_i) f_2 (y_i)\bigr|\leq (1+ \sqrt{2})^2 \d \bigl|
\D f_1 (y_i) f_2 (y_i)\bigr|\,,$$ in contrast with the bound $\d
<1/8$. \qed

 Applying the above claim to \eqref{profitto} we conclude that
$\cN ^{[0,1]} _{D, V} (1) \leq 2R +N$. We have already observed that
$R$ has finite expectation. The same trivially holds also for $N$
due to \eqref{adavide2}.
\end{proof}

\section{Proof of Theorem \ref{affinitabis}}\label{primopiano}
Recall the definition of $\cT_n$ given in the previous section.
Given a realization of $V$, for each $n\geq 1$ we consider the
continuous--time nearest--neighbor random walk $\tilde X^{(n)}$  on
$\bbZ_n$ with jump rates \begin{equation}\label{modena} c_n (x,y)=
\begin{cases} L_2 (n) n^{1+\frac{1}{\a} } \t_n (x\lor y)^{-1} & \text{ if }
|x-y|=1/n\,,\\
0 & \text{ otherwise}\,.
\end{cases}
\end{equation}
The rates $c_n(x,y)$ for $|x-y|=1/n$  can be written as $c_n(x,y)=
1/\bigl[H_n(x,y) U_n(x \lor  y) \bigr]$,  where $H_n(x)=1/n$ and
$U_n (x)= L_2(n)^{-1} n^{-\frac{1}{\a} }\t_n (x)$. To the above
random walk we associate the measure $dm_n$ defined in
\eqref{treno3}.

\subsection{Proof of Point (i)}
Let us show  that $dm_n$ weakly converges to $d( V^{-1} )_*$ (recall
\eqref{percome}). We point out that in \cite{KK} a similar result is
proved, but the definition given in \cite{KK} of the analogous of $d
m_n$ is different, hence that proof cannot be adapted to our case.
In order to prove the weak convergence  of $dm_n$ to $d( V^{-1})_*
$, we use some results and ideas developed in Section 3 of
\cite{FIN}. Recall   that the constant $c_\e$  of \cite{FIN} equals
our quantity $1/h(n)=1/\left(n^{1/\a} L_2 (n)\right)$ if $\e=1/n$ .
Given $n\geq 1$ and $x>0$ we define
$$ g_n(x)=  \bigl(L_2(n) n^{\frac{1}{\a} }\bigr)^{-1}  G^{-1} (
n^{\frac{1}{\a} } x)\,.
$$
We point out that $g_n$ coincides with the function $g_\e$ defined
in \cite{FIN}[(3.12)] if $\e=1/n$.
 As stated in Lemma 3.1 of \cite{FIN} it holds
$ g_n(x) \rightarrow x $ as $n\rightarrow \infty$ for all $x >0$.
 Since $g_n$ is nondecreasing, we conclude that
\begin{equation}\label{mandarinobis}
 g_n(x_n) \rightarrow x \text{ as  }n\rightarrow \infty\,, \qquad \forall x >0, \; \forall \{x_n\}_{n\geq 1}: x_n >0 \,, \;x_n \rightarrow x \,.
 \end{equation}
  As stated in Lemma 3.2 of \cite{FIN}, for
any $\d'>0$ there exist positive constants $C'$ and $C''$ such that
\begin{equation}\label{pistacchioso}
g_n(x) \leq C' x^{1-\d'} \text{ for } n^{-\frac{1}{\a}} \leq x \leq
1 \text{ and } n \geq  C''\,.
\end{equation}
Since  $U_n(x)= g_n \bigl( V(x+1/n)-V(x)\bigr)$, we can write
\begin{equation}\label{sonnissimo}
 S_n \bigl(k/n\bigr) = \sum _{j=0}^{k-1}
 g_n \left( V\bigl( (k+1)/n \bigr)- V\bigl( k/n
 \bigr)\right)\,.
\end{equation}
\begin{Le}\label{franchi1}
For $\cP$--almost all $V$ it holds
\begin{equation}\label{pc}
\lim _{n \uparrow \infty} \max_{0\leq k \leq n} \bigl | S_n (k/n) -
V(k/n) \bigr|=0\,.
\end{equation}
\end{Le}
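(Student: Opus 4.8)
The plan is to bound the uniform deviation by the sum of the per--increment errors of $g_n$ and then to isolate the finitely many ``big'' increments. Write $\Delta_{n,j}:=V\bigl((j+1)/n\bigr)-V(j/n)\ge 0$ for $0\le j\le n-1$. Since $V(k/n)=\sum_{j=0}^{k-1}\Delta_{n,j}$, identity \eqref{sonnissimo} gives
\begin{equation*}
\max_{0\le k\le n}\bigl|S_n(k/n)-V(k/n)\bigr|\;\le\;\sum_{j=0}^{n-1}\bigl|g_n(\Delta_{n,j})-\Delta_{n,j}\bigr|,
\end{equation*}
so it suffices to show that the right--hand side tends to $0$ for $\cP$--almost every $V$. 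Fix once and for all $\delta'\in(0,1-\alpha)$ (possible because $0<\alpha<1$), and let $\eta\in(0,1)$ be a further parameter. Let $\Sigma_1(\eta)$ and $\Sigma_2(\eta)$ denote, respectively, the sums of $w^{1-\delta'}$ and of $w$ over all jumps $w$ of $V$ in $[0,1]$ of size $\le\eta$; since the jumps of $V$ in $[0,1]$ form a Poisson point process of intensity $c\,w^{-1-\alpha}\,dw$, one has $\bbE[\Sigma_1(\eta)]=c\int_0^\eta w^{-\delta'-\alpha}\,dw<\infty$ and $\Sigma_2(\eta)\le V(1)<\infty$, so $\Sigma_1(\eta),\Sigma_2(\eta)$ are $\cP$--a.s.\ finite and decrease to $0$ as $\eta\downarrow 0$.

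Split $\{0,\dots,n-1\}$ into $A_n:=\{j:\Delta_{n,j}>\eta\}$ and its complement. Because $\sum_j\Delta_{n,j}=V(1)$, we have $\#A_n\le V(1)/\eta$ uniformly in $n$, and for $j\in A_n$ the value $\Delta_{n,j}$ lies in the bounded set $(\eta,V(1)]$. I claim that $\max_{j\in A_n}\bigl|g_n(\Delta_{n,j})-\Delta_{n,j}\bigr|\to 0$: if not, there are a subsequence $n_k$ and indices $j_k\in A_{n_k}$ along which $\Delta_{n_k,j_k}\to c$ for some $c\in[\eta,V(1)]$ (hence $c>0$) while $\bigl|g_{n_k}(\Delta_{n_k,j_k})-\Delta_{n_k,j_k}\bigr|$ stays bounded away from $0$; but \eqref{mandarinobis} forces $g_{n_k}(\Delta_{n_k,j_k})\to c$, a contradiction. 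Multiplying by the uniform bound $\#A_n\le V(1)/\eta$ gives $\sum_{j\in A_n}\bigl|g_n(\Delta_{n,j})-\Delta_{n,j}\bigr|\to 0$ $\cP$--a.s.

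For $j\notin A_n$ one has $0\le\Delta_{n,j}\le\eta<1$, so $\bigl|g_n(\Delta_{n,j})-\Delta_{n,j}\bigr|\le g_n(\Delta_{n,j})+\Delta_{n,j}$; moreover every jump of $V$ contained in $[j/n,(j+1)/n]$ for some $j\notin A_n$ has size $\le\Delta_{n,j}\le\eta$, whence, by subadditivity of $t\mapsto t^{1-\delta'}$,
\begin{equation*}
\sum_{j\notin A_n}\Delta_{n,j}\le\Sigma_2(\eta),\qquad\sum_{j\notin A_n}(\Delta_{n,j})^{1-\delta'}\le\Sigma_1(\eta),
\end{equation*}
both bounds being independent of $n$. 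Separating the indices $j\notin A_n$ according to whether $\Delta_{n,j}<n^{-1/\alpha}$ or not, and applying \eqref{pistacchioso} (valid on $[n^{-1/\alpha},1]\supseteq[n^{-1/\alpha},\eta]$) together with the monotonicity of $g_n$, we obtain $\sum_{j\notin A_n,\ \Delta_{n,j}<n^{-1/\alpha}}g_n(\Delta_{n,j})\le n\,C'\,n^{-(1-\delta')/\alpha}\to 0$ (here the choice $\delta'<1-\alpha$ is used) and $\sum_{j\notin A_n,\ \Delta_{n,j}\ge n^{-1/\alpha}}g_n(\Delta_{n,j})\le C'\,\Sigma_1(\eta)$. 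Putting the pieces together,
\begin{equation*}
\limsup_{n\to\infty}\ \max_{0\le k\le n}\bigl|S_n(k/n)-V(k/n)\bigr|\;\le\;C'\,\Sigma_1(\eta)+\Sigma_2(\eta)\qquad\cP\text{--a.s.}
\end{equation*}
Letting $\eta\downarrow 0$ along a sequence and intersecting the countably many almost--sure events involved yields \eqref{pc}.

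The step requiring the most care is the handling of the ``big'' increments $j\in A_n$: one must make rigorous, by the subsequence/compactness argument above, that $\max_{j\in A_n}|g_n(\Delta_{n,j})-\Delta_{n,j}|\to 0$ --- this is precisely where the pointwise convergence \eqref{mandarinobis} of $g_n$ enters --- while keeping track of the uniform bound $\#A_n\le V(1)/\eta$ on the number of such increments. Everything else is a routine combination of the a priori estimate \eqref{pistacchioso} with the elementary first--moment bounds for the jumps of an $\alpha$--stable subordinator.
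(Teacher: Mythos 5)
Your proof is correct, and while it rests on the same two ingredients as the paper's argument --- the locally uniform convergence \eqref{mandarinobis} for increments bounded away from $0$, and the power bound \eqref{pistacchioso} for small increments --- the execution is genuinely different and, I would say, more self-contained. The paper keeps track of the partial sums themselves: it first matches, eventually in $n$, the set of ``big'' increments with the set of big jumps of $V$ (the claim \eqref{pietrino}), proves uniform convergence of the partial sums over big increments to the partial sums over big jumps (\eqref{succo}), and disposes of the small increments by invoking the reasoning of Proposition 3.1 in \cite{FIN} (\eqref{frutta}); the uniform-in-$k$ statement is then obtained by comparing both $S_n(k/n)$ and $V(k/n)$ to the common reference $\sum_{j\in J_\delta:\,x_j\le k/n}v_j$. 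You instead dominate the uniform deviation at the outset by the single $\ell^1$ quantity $\sum_j|g_n(\Delta_{n,j})-\Delta_{n,j}|$, which removes the need for any increment--jump matching: the big increments are handled by the counting bound $\#A_n\le V(1)/\eta$ together with a compactness argument from \eqref{mandarinobis}, and the small ones by the subadditivity of $t\mapsto t^{1-\delta'}$ combined with the explicit first-moment computation $\bbE[\Sigma_1(\eta)]=c\int_0^\eta w^{-\delta'-\alpha}dw<\infty$ (which is exactly where the constraint $\delta'<1-\alpha$ is needed, both here and in making $n\,C'n^{-(1-\delta')/\alpha}$ vanish). What the paper's route buys is the extra structural information \eqref{pietrino}--\eqref{succo}, which is of independent interest; what yours buys is a shorter, fully explicit proof that does not defer to \cite{FIN} and sidesteps the somewhat delicate ``eventually in $n$'' identification of big increments with big jumps.
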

\begin{proof}
We recall that $V$ can be obtained by means of the identity $dV=
\sum _{j\in J} x_j \d_{v_j}$, where the random set $\xi=\{(x_j,v_j):
j\in J\}$ is the realization of a inhomogeneous Poisson point
process    on $\bbR\times \bbR_+$  with intensity $c v^{-1-\a} dx
dv$, for a suitable positive constant $c$.
 Given $y>0$, let us define
\begin{align*}
& J_{n,y}:=\{r\in\{0,1, \dots,n-1\} \,:\, V((r+1)/n)- V(r/n) \geq y
\}\,,\\
& J_y:=\{ j\in J\,:\, v_j \geq y\,,\;x_j \in [0,1] \} \,.
\end{align*}
Note that  the set $J_y$ is always finite.
Reasoning as in the Proof of Proposition 3.1 in \cite{FIN}, and in
particular using also  \eqref{pistacchioso},  one obtains   for
$\cP$--a.a. $V$ that
 \begin{equation}\label{frutta}
\limsup _{n \uparrow \infty} \sum _{r: 0 \leq r <n\,, r \not \in
J_{n,\d}  }
 g_n \left( V\bigl( (r+1)/n \bigr)- V\bigl( r/n
 \bigr)\right)=0 \,, \qquad \forall \d >0\,.
 \end{equation}

\smallskip

We claim that, given $\d >0$, for a.a. $V$ it holds
\begin{equation}\label{pietrino}
J_{n, \d}= \bigl\{ r \in \{0, 1, \dots, n-1\}\,:\, \exists j \in
J_\d \text{ such that } x_j \in (r/n, (r+1)/n] \bigr\}
\end{equation}
eventually in $n$. Let us suppose that \eqref{pietrino} is not
satisfied. Since the set in the r.h.s. is trivially included in
$J_{n,\d}$,  there exists a sequence of integers $r_n$ with $0 \leq
r_n < n$ such that $a_n:=V( (r_n+1)/n)- V(r_n/n) \geq \d$ while $
v_j < \d$ for all $x_j \in (r_n, (r_n+1)/n]$. We introduce the
c\`{a}dl\`{a}g
 function $\bar V(t)= \sum _{j\in J: x_j \leq t } v_j\bbI( v_j < \d)
$ and we note that, if $\forall  j \in J $ with $x_j \in (r_n/n,
(r_n+1)/n]$ it holds $v_j < \d$,  then $a_n=\bar V( (r_n+1)/n)- \bar
V(r_n/n)$.  At cost to take a subsequence, we can suppose that $r_n
/n$ converges to some point $x$. It follows then that $\bar
V(x+)-\bar V(x-)\geq \d$, in contradiction with the fact that $\bar
V$ has only jumps smaller than $\d$. This concludes the proof of our
claim.

\smallskip

Due to the above claim and due to  \eqref{mandarinobis}, we conclude
that a.s., given $\d>0$, it holds
\begin{equation}\label{succo} \lim _{n \uparrow \infty} \sup_{1\leq k \leq n }\Big|
\sum _{r \in J_{n,\d}, r <k }
 g_n \left( V\bigl( (r+1)/n \bigr)- V\bigl( r/n
 \bigr)\right)-
 \sum _{j \in J_\d: x_j \leq k/n } v_j\Big| =0\,.
 \end{equation}
Combining \eqref{succo} and \eqref{frutta}, we conclude that for any
$\e>0$ one can fix a.s. $\d>0$ small enough such that
\begin{equation}\label{anno0}
 \max_{ 0\leq k \leq n } \bigl|S(k/n)- \sum _{j
\in J_\d: x_j \leq k/n } v_j \bigr|\leq \e \end{equation} for $n$
large enough. On the other hand, a.s. one can fix $\d$ small enough
that $\sum _{j\in J_\d: x_j \in [0,1]} v_j $ is bounded by $\e$.
This last bound and \eqref{anno0} imply \eqref{pc}.
\end{proof}

\begin{Le}\label{franchi2}
For $\cP$--almost  all $V$ and
  for any function $f \in C_c (\bbR)$
it holds
\begin{equation}\label{latino} \lim _{n\uparrow \infty}\frac{1}{n} \sum
_{k=0}^{n} f\left( S_n (k/n) \right) = \int _{[0,V(1)]} f(x) d
V^{-1}(x) \,.
\end{equation}
\end{Le}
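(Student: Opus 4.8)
The plan is to derive \eqref{latino} from Lemma \ref{franchi1}, an elementary Riemann--sum argument, and a change of variables. First I would record that, by the defining formula \eqref{treno3} of $m_n$ with the present choice $H_n(k/n)=1/n$, one has $dm_n=\tfrac1n\sum_{k=0}^n\delta_{S_n(k/n)}$; hence the left--hand side of \eqref{latino} equals $\int f\,dm_n$, and (recalling \eqref{percome} and that $V^{-1}(V(1))=1$, since $V$ is $\cP$--a.s.\ strictly increasing) the right--hand side equals $\int f\,d(V^{-1})_*$, so that \eqref{latino} is precisely the weak convergence $dm_n\to d(V^{-1})_*$ tested against $f\in C_c(\bbR)$.

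Since $f$ has compact support it is uniformly continuous; writing $\omega_f$ for its modulus of continuity, Lemma \ref{franchi1} (i.e.\ \eqref{pc}) gives, for $\cP$--a.a.\ $V$,
\[
\Big|\tfrac1n\sum_{k=0}^n f\big(S_n(k/n)\big)-\tfrac1n\sum_{k=0}^n f\big(V(k/n)\big)\Big|
\le \tfrac{n+1}{n}\,\omega_f\Big(\max_{0\le k\le n}\big|S_n(k/n)-V(k/n)\big|\Big),
\]
and the right--hand side tends to $0$ as $n\to\infty$. Thus it suffices to prove that $\tfrac1n\sum_{k=0}^n f\big(V(k/n)\big)\to\int_{[0,V(1)]}f(x)\,dV^{-1}(x)$ for $\cP$--a.a.\ $V$.

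Now $t\mapsto f(V(t))$ is bounded on $[0,1]$, because $V([0,1])\subset[0,V(1)]$ is compact, and c\`adl\`ag, being the composition of the continuous $f$ with the c\`adl\`ag $V$; in particular it has at most countably many discontinuities, hence is Riemann integrable on $[0,1]$. Therefore the left--endpoint Riemann sums $\tfrac1n\sum_{k=0}^{n-1}f(V(k/n))$ converge to $\int_0^1 f(V(t))\,dt$, while the remaining term $\tfrac1n f(V(1))$ vanishes. It then remains to identify $\int_0^1 f(V(t))\,dt$ with $\int_{[0,V(1)]}f\,dV^{-1}$, which I would do by showing that the push--forward of Lebesgue measure on $[0,1]$ under $V$ coincides with the restriction of $dV^{-1}$ to $[0,V(1)]$. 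Indeed this push--forward is carried by $[0,V(1)]$ and, for every $x\in[0,V(1)]$, strict monotonicity of $V$ forces $V(t)\le x$ for $t<V^{-1}(x)$ and $V(t)>x$ for $t>V^{-1}(x)$, so $\{t\in[0,1]:V(t)\le x\}$ differs from $[0,V^{-1}(x)]$ by at most one point and thus has Lebesgue measure $V^{-1}(x)=V^{-1}(x)-V^{-1}(0)=dV^{-1}\big([0,x]\big)$, using the continuity of $V^{-1}$ and $V^{-1}(0)=0$; two finite measures agreeing on all intervals $[0,x]$ are equal, which yields the desired identity and hence \eqref{latino}.

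The only point calling for some care is this last change of variables: since $V$ has jumps (equivalently $V^{-1}$ has flat stretches) a naive substitution is not available and one must argue through distribution functions as above, after which the identification is immediate because $dV^{-1}$ is nonatomic. The remaining ingredients --- uniform continuity of $f$, Riemann integrability of bounded c\`adl\`ag functions, and the vanishing of the boundary term --- are entirely routine.
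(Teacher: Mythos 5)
Your proposal is correct. The first half coincides with the paper's proof: both use the uniform continuity of $f$ together with Lemma \ref{franchi1} (i.e.\ \eqref{pc}) to replace $S_n(k/n)$ by $V(k/n)$ in the sum. You then diverge. The paper approximates $f$ by stepwise functions with jumps at rational points and verifies the limit directly for the indicators $\bbI(x\leq t)$, $t\in\bbQ$ (for which the sum $\tfrac1n\,\sharp\{k: V(k/n)\leq t\}$ converges to $V^{-1}(t)$ by strict monotonicity of $V$), declaring this "immediate". You instead pass through the intermediate object $\int_0^1 f(V(t))\,dt$: the Riemann--sum convergence follows from the Lebesgue criterion applied to the bounded c\`adl\`ag function $f\circ V$, and the identification of the push--forward of Lebesgue measure under $V$ with $dV^{-1}$ on $[0,V(1)]$ is exactly the same distribution--function computation that underlies the paper's "immediate check", just packaged as a change--of--variables statement for general $f$ rather than for indicators. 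The two routes are logically equivalent; yours has the merit of making explicit the measure--theoretic identity that the paper leaves implicit, and of isolating the only delicate point (that a naive substitution fails because $V$ jumps). Both arguments are complete and correct.
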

\begin{proof} Since $f$ is uniformly continuous, by Lemma \ref{franchi1} it
is enough to prove  \eqref{latino} with $S_n (k/n)$ replaced by
$V(k/n)$. Approximating $f$ by stepwise functions with jumps on
rational points, it is enough to prove that, fixed  $t \in \bbQ$,
for $\cP$--a.a. $V$ the limit \eqref{latino} holds with $S_n (k/n)$
replaced by $V(k/n)$ and with $f(x)= \bbI(x \leq t)$. This last
check is immediate.
\end{proof}
We have now all the tools in order to prove Point (i) of Theorem
\ref{affinitabis}. Indeed,  by Lemma \ref{franchi1} $\ell_n=
S_n(1)\rightarrow  V(1)$ $\cP$--a.s. Moreover, by Lemma
\ref{franchi2} the measure $dm_n$ defined in \eqref{treno3} weakly
converges to the measure $d (V^{-1})_*$. In order to get Point (i)
of Theorem \ref{affinitabis} it is enough to apply Theorem
\ref{speriamo}.

\subsection{Proof of Point (ii)}

 If $\bbE( e^{-\l \t(x)})=
e^{-\l^\a} $ one can replace $L_2 (n)$ with $1$ in \eqref{modena}
and in the above  definition of $U_n(x)$, and one can define
$\t_n(x)$ directly by means of \eqref{guerra}. In this case,
definition \eqref{treno1} gives $S_n (k/n)=V\bigl((k+1)/n\bigr)$ and
therefore $dm_n= \frac{1}{n} \sum _{k=1}^{n+1} \d_{V(k/n) }$.
It is  simple to prove that  a.s. $dm_n$ weakly converges to
$dm:=d(V^{-1})_*$. Hence,  one gets that the assumptions  of Theorem
\ref{speriamo} are fulfilled with $\ell_n=V\bigl( (n+1)/n)$,
$\ell=V(1)$ and $dm=(V^{-1})_*$, for almost  all realization of $V$.
As a consequence, one derives Point (ii) in  Theorem
\ref{affinitabis}.

\subsection{Proof of Point (iii)}
The proof of point (iii) of Theorem \ref{affinitabis} follows  from
Theorem  \ref{tacchino} once we prove  \eqref{kikokoala00} with
$m=V$. As in the proof of Lemma \ref{dimitri} we denote by $0<y_1<
y_2< \cdots < y_N<1$ the points in $[0,1]$ where $V$ has a jump
larger than $1/2$ (note that  $V$ is continuous in $0$ and $1$
a.s.).
 We set
$a_i:= V(y_i-)$, $b_i= V(y_i)$ and remark that   the function
$V^{-1}$ is constant on $[a_i,b_i]$. Then we fix $\e>0$ (which is a
random number) such that  the following properties holds:
\begin{itemize}
\item[(i)] the intervals $U_i:=[a_i-\e,b_i+\e]$, $i=1,...,N$,  are disjoint
and included in $[0, V(1)]$,
\item[(ii)] $V$ has no jump at $a_i-\e$ and $b_i+\e$, for all $i=1
,\dots, N$,
\item[(iii)]  for all $i=1, \dots, N$,
\begin{equation}\label{tamburo}
(b_i-a_i +2\e) \bigl( V^{-1} (b_i+\e) - V^{-1}(a_i-\e) \bigr) \leq
1/2\,.\end{equation} \end{itemize} Note that, since $V^{-1}$ is
continuous a.s.  and flat on $U_i$, condition (iii) is satisfied for
$\e$ small enough. Moreover, due to condition (ii) it holds
$V^{-1}(x)< V^{-1}(y)<V^{-1}(z)$ if $y\in \{a_i-\e, b_i +\e\}$ and
$x<y<z$.

\smallskip
Let now $f$ be an eigenfunction of the operator $-D_{V^{-1}}D_x$ on
$U_i$ with Dirichlet boundary conditions. Writing $\l$ for the
associated eigenvalue,  by equation \eqref{nonnina} in Lemma
\ref{volare} it holds
$$ f(x)= \l \int _{U_i} G_{a_i-\e, b_i+\e}(x,y) f(y) d
V^{-1} (y) \,.
$$
Using that $\|G_{a_i-\e, b_i+\e}\|_\infty \leq b_i-a_i+2\e$ we get
\begin{equation}\label{esamini}|f(x)| \leq \l (b_i-a_i+2\e) \|f\|_\infty\bigl( V^{-1} (b_i+\e)
- V^{-1}(a_i-\e) \bigr)\,.
\end{equation}
Combining \eqref{tamburo} and \eqref{esamini} we conclude that $\l
\geq 2$. Hence $\cN ^{U_i} _{V^{-1}, D} (1) =0$. We now observe that
the set  $ W= [0, V(1) ] \setminus \cup _{i=1}^N U_i $ is the union
of $N+1$ intervals and its total length is smaller than $V^{(1)}(1)$
(see the proof of Lemma \ref{dimitri} for the definition of
$V^{(1)}$). It follows that we can partition  $W$ in at most $2
V^{(1) }(1)+N$ subintervals $A_r$  of length bounded by $1/2$. Since
the $dV^{-1}$--mass of any subinterval $A_r$ is bounded by the total
$d V^{-1}$--mass of $[0, V(1) ]$ (which is a.s. $1$), by the
estimate \eqref{materazzi} in Lemma \ref{volare} we get that  all
eigenvalues of the operator $-D_{V^{-1}} D_x$ restricted to any
subinterval $A_r$ (with Dirichlet b.c.) is at least $2$, hence  $\cN
^{A_r}_{V^{-1}, D}(1)=0$. We now apply  Corollary \ref{mela},
observing that we are in the same setting on Theorem \ref{DNB}
(recall that $V^{-1}$ is continuous a.s. and recall our condition
(ii), thus leading to (i)--(iii) in Theorem \ref{DNB}). By Corollary
\ref{mela},  we conclude that $\cN^{[0, V(1)]}_{V^{-1} , D} (1) \leq
V^{(1)}(1) +4 N$ a.s. As already observed in the proof of Lemma
\ref{dimitri}, both $V^{(1)}(1)$ and $N$ have finite expectation,
thus leading to \eqref{kikokoala00}.

\section{ The diffusive case: Proof of Propositions \ref{maschio1} and \ref{maschio2}
}\label{diffondo}

\subsection{Proof of Proposition \ref{maschio1}}
 We consider the diffusively rescaled random walk $X^{(n)}$on
 $\bbZ_n$ with jump rates
$$c_n (x,y)=
\begin{cases} \bbE( \t(0)^{-a} )^2 \bbE( \t(0) ) n^2  \t  (nx ) ^{-1+a} \t  (ny )^{a} & \text{ if }
|x-y|=1/n\,\\
0 & \text{ otherwise}\,.
\end{cases}
$$
The above jump rates can be written as $c_n(x,y)= 1/ H_n(x) U_n
(x\lor  y)$ for $|x-y|=1/n$ by taking
$$
\begin{cases}
 U_n(x)= \bbE( \t(0)^{-a} )^{-2} n^{-1}  \t (nx-1 )^{-a} \t (n x)^{-a} \,\\
  H_n(x)= \bbE( \t(0) )^{-1}   n^{-1}   \t (n x)  \,.
  \end{cases}
$$
Due to our definition \eqref{treno1} we have
$$ S_n\bigl(k/n \bigr) = \frac{1}{n \bbE( \t(0) ^{-a} )^2 } \sum_{j=1}^k   \t(j-1)^{-a} \t(j)^{-a}\,, \qquad 0 \leq k \leq n\,.
$$
By the ergodic theorem and the assumption  $\bbE\bigl(
\t(0)^{-a}\bigr) <\infty$,  it holds $ \lim _{n\uparrow \infty} S_n
\bigl( \lfloor x n \rfloor /n)=x$ for all $x \geq 0$ (a.s.). In
particular, it holds $\ell_n= S_n(1) \rightarrow 1$. Since $\p^2
k^2$ is the $k$--th eigenvalue of $-\D$ with Dirichlet conditions
outside $(0,1)$, by Theorem \ref{speriamo}  it remains to prove
that, a.s., for all $f\in C_c ([0,\infty))$ it holds
\begin{equation}\label{ciuff}
\lim_{n\uparrow \infty} dm_n (f)= \lim_{n\uparrow \infty}
\sum_{k=0}^n f( S_n(k/n) ) H_n(k/n)= \int_0^1 f(s) ds \,.
\end{equation}
By the ergodic theorem and the assumption $\bbE\bigl( \t(0)
\bigr)<\infty$, the total mass of $dm_n$, i.e.$ \sum _{k=0}^n
H_n(k/n)$, converges to $1$ a.s. Hence,  by a standard approximation
argument with stepwise functions, it is enough to prove
\eqref{ciuff} for functions $f$ of the form $f=\bbI ([0,t) )$. By
the ergodic theorem a.s. it holds: for any $\e>0$ there exists a
random integer $n_0$ such that $ S_n(k/n)< t$ for all $ k \leq
(t-\e) n$ and $S_n(k/n)>t$ for all $ k \geq (t+\e)/n $. Therefore,
for $f$ as above and  $n \geq n_0$, we can bound
$$
\frac{1}{n \bbE( \t(0) )} \sum _{k\in \bbN: k \leq (t-\e) n}
\t(k)\leq dm_n (f) \leq\frac{1}{n \bbE( \t(0) )} \sum _{k\in \bbN: k
\leq (t+\e) n} \t(k)\,.
$$
Applying again the ergodic theorem,  it is immediate to conclude.

\subsection{Proof of Proposition \ref{maschio2} }
We sketch the proof since the technical steps are very easy and
similar to the ones discussed above.
 We consider the diffusively rescaled random walk $X^{(n)}$on
 $\bbZ_n$ with jump rates
$$ c_n (x,y)=
\begin{cases} n^2 \bbE(\t(0) )  \t (nx\lor n y)^{-1} & \text{ if }
|x-y|=1/n\,,\\
0 & \text{ otherwise}\,.
\end{cases}
$$
The rates $c_n(x,y)$ for $|x-y|=1/n$  can be written as $c_n(x,y)=
1/\bigl[H_n(x,y) U_n(x \lor  y) \bigr]$,  where $H_n(x)=1/n$ and
$U_n (x)=  \t(n x)/ n \bbE(\t(0) ) $. By the ergodic theorem and the
assumption $\bbE(\t(0))<\infty$, a.s. it holds $\lim_{n\uparrow
\infty} S_n ( \lfloor nx \rfloor)= x$ for all $x \geq 0$. In
particular, a.s. $S_n(n) \rightarrow 1$ and  $$ \lim _{n\uparrow
\infty} dm_n(f) = \lim _{n\uparrow \infty} \frac{1}{n} \sum _{k=0}^n
f\bigl( S_n (k/n) \bigr) = \int_0^1 f(x) dx\,,
$$
for all $f \in C_c ([0,\infty)).$ At this point it is enough to
apply Theorem \ref{speriamo}.
\bigskip

\bigskip

\noindent {\bf Acknowledgements}. I  thank Jean--Christophe Mourrat
and  Eugenio Montefusco   for useful discussions, and an anonymous
referee for      useful suggestions. I  acknowledge the financial
support of the European Research Council through the ``Advanced
Grant''  PTRELSS 228032.  Part of this work has been done just after
the earthquake in L'Aquila, where I lived with my family. Referring
to that period, I kindly acknowledge the public library of Codroipo
for the computer facilities, the ``Protezione Civile" of Codroipo
for the bureaucratic help, the colleagues who have sent me files and
who have substituted me in teaching,
   the Department of Mathematics and the office     ``Affari Sociali" of the      University
     ``La Sapienza".


\end{document}